\renewcommand\theenumi{\alph{enumi}}
\renewcommand{\labelenumi}{(\theenumi)}
\newtheoremstyle{mytheoremstyle} % name
    {5pt}                    % Space above
    {5pt}                    % Space below
    {\itshape}                   % Body font
    {\parindent}                           % Indent amount (empty = no indent, \parindent = para indent)
    {\bf}                   % Theorem head font
    {.}                          % Punctuation after theorem head
    {.5em}                       % Space after thm head: " " = normal interword space; \newline = linebreak
    {}  % Theorem head spec (can be left empty, meaning ‘normal’)
\theoremstyle{mytheoremstyle}
\newtheorem{theorem}{Theorem}[section]
\newtheorem{lemm}[theorem]{Lemma}
\newtheorem{prop}[theorem]{Proposition}
\newtheorem{coro}[theorem]{Corollary}
\newtheoremstyle{mytdefintionstyle} % name
    {5pt}                    % Space above
    {5pt}                    % Space below
    {\rm}                   % Body font
    {\parindent}                           % Indent amount
    {\bf}                   % Theorem head font
    {.}                          % Punctuation after theorem head
    {.5em}                       % Space after theorem head
    {}  % Theorem head spec (can be left empty, meaning ‘normal’)
\theoremstyle{remark}
\newtheorem{rmrk}[theorem]{Remark}
\theoremstyle{mytdefintionstyle}
\newtheorem{defn}[theorem]{Definition}
\newtheorem{exmp}[theorem]{Example}
\newtheoremstyle{exmp_contd}
    {5pt}                    % Space above
    {5pt}                    % Space below
    {\rm}                   % Body font
    {\parindent}                           % Indent amount
    {\bf}                   % Theorem head font
    {.}                          % Punctuation after theorem head
    {.5em}                       % Space after theorem head
    {\thmname{#1}\ \thmnumber{ #2}\thmnote{#3}\ (continued)}  % Theorem head spec (can be left empty, meaning ‘normal’)
\theoremstyle{exmp_contd}
\newcommand\nameft\textrm
\newcommand{\QQ}{{\mathscr{Q}}}
\renewcommand{\SS}{{\mathscr{S}}}
\newcommand{\WW}{{\mathscr{W}}}
\DeclareMathOperator{\Ann}{Ann}
\DeclareMathOperator{\Ext}{Ext}
\DeclareMathOperator{\coker}{coker}
\DeclareMathOperator{\Gen}{\mathbf{G}}
\DeclareMathOperator{\img}{im}
\DeclareMathOperator{\source}{source}
\DeclareMathOperator{\target}{target}
\DeclareMathOperator{\coimg}{coim}
\DeclareMathOperator{\Hom}{Hom}
\DeclareMathOperator{\Spec}{Spec}
\DeclareMathOperator{\Proj}{Proj}
\DeclareMathOperator{\Sat}{Sat}
\newcommand{\Sfpgrmod}{{\grS\mathrm{\textnormal{-}grmod}}}
\newcommand{\Coh}{\mathfrak{Coh}\,}
\newcommand\grS{S}
\newcommand\grM{M}
\newcommand\grN{N}
\newcommand\shF{\mathcal{F}}
\renewcommand\O{\mathcal{O}}
\newcommand\PP{\mathbb{P}}
\newcommand\A{\mathcal{A}}
\newcommand\C{\mathcal{C}}
\renewcommand{\O}{\mathcal{O}}
\newcommand{\Q}{\mathbb{Q}}
\newcommand{\CC}{\mathbb{C}}
\newcommand{\Z}{\mathbb{Z}}
\newcommand\N{\mathbb{N}}
\renewcommand\phi{\varphi}
\DeclareMathOperator\Id{Id}
\DeclareMathOperator\Cl{Cl}
\DeclareMathOperator\cores{co-res}
\definecolor{darkgray}{rgb}{0.3,0.3,0.3}
\newcommand{\liftofCalongB}[2]{{}^{#2} \negthinspace / \negthinspace {}_{#1}}
\newcommand{\coliftofCalongB}[2]{{}_{#1} \negthinspace \backslash \negthinspace {}^{#2}}
\newcommand{\closeoverline}[1]{
  \overline{{\raisebox{0pt}[\height-0.5pt\relax][0pt]{$#1$}}}
%   \overline{#1}
}
\newcommand{\closeunderline}[1]{
  \underline{{\raisebox{0pt}[0pt][\depth+1.5pt\relax]{$#1$}}}
}
\newcommand{\overunderline}[1]{
  \closeunderline{\closeoverline{#1}}
}
\newcounter{saveenum}
\definecolor{darkgreen}{rgb}{0.008,0.617,0.067}
\definecolor{brown}{rgb}{0.6,0.4,0.2}
\newif\ifjournalversion
\newcommand{\journal}[2]{\ifjournalversion #1\else #2\fi}
\author{Mohamed Barakat}
\address{Department of mathematics, University of Kaiserslautern, 67653 Kaiserslautern, Germany}
\email{\href{mailto:Mohamed Barakat <barakat@mathematik.uni-kl.de>}{barakat@mathematik.uni-kl.de}}
\author{Markus Lange-Hegermann}
\address{Lehrstuhl B f\"ur Mathematik, RWTH Aachen University, 52062 Aachen, Germany}
\email{\href{mailto:Markus Lange-Hegermann <markus.lange.hegermann@rwth-aachen.de>}{markus.lange.hegermann@rwth-aachen.de}}
\begin{document}

\title[Gabriel morphisms and the computability of Serre quotients]{Gabriel morphisms and the computability of Serre quotients with applications to coherent sheaves}

\begin{abstract}
  The purpose of this paper is to develop an efficient computational model for \nameft{Abel}\-ian categories of coherent sheaves over certain classes of varieties.
  These categories are naturally described as \nameft{Serre} quotient categories.
  Hence, our approach relies on describing general \nameft{Serre} quotient categories in a constructive way which leads to an efficient computer implementation.
\end{abstract}

\keywords{Constructive homological algebra, \nameft{Serre} quotient category, generalized morphisms, \nameft{Gabriel} morphisms, commutative inverse monoids, zeroid, constructive coherent sheaves, \nameft{Gröbner} bases, $\mathtt{homalg}$}
\subjclass[2010]{%
18E10, %Exact categories, abelian categories
18E35, %Localization of categories
18G40, %Spectral sequences, hypercohomology
18A40, %Adjoint functors (universal constructions, reflective subcategories, Kan extensions, etc.)
20M18} %Inverse semigroups

\maketitle

% \tableofcontents

\renewcommand\theenumi{\alph{enumi}}
\renewcommand{\labelenumi}{(\theenumi)}

\hyphenation{co-do-main}

\section{Introduction}

This paper develops an efficient computational model for \nameft{Abel}ian categories of coherent sheaves over certain classes of varieties.
The standard approach to constructive mathematics is to require that all disjunctions and all existential quantifiers appearing in defining axioms have to be realized by algorithms.
We call an \nameft{Abel}ian category for which this holds \textbf{constructively \nameft{Abel}ian} or \textbf{computable} (cf.~Appendix~\ref{sec:ComputableAbelian}, in particular Definition~\ref{defn:computable}).
%\todo{kannst du noch eine gute referenz nach [BAR09] formulieren wegen der anwendungen einer solchen kategorie?}

Such categories of coherent sheaves are \nameft{Serre} quotient categories $\A/\C$ of some category $\A=\Sfpgrmod$ of finitely generated graded $S$-modules (over a graded \nameft{Noether}ian ring $S$) modulo a subcategory $\C$ of negligible modules, i.e., those with zero sheafification \cite{BL_SerreQuotients}.
Luckily, due to \nameft{Gröbner} bases methods such categories $\A$ of modules are constructively \nameft{Abel}ian \cite{BL}.
In the typical case of $\C$ not being a zero category one cannot naively model coherent sheaves by f.g.~graded $S$-modules.
Taking this discrepancy $\C$ into account is unavoidable for the correctness of many homological algorithms, e.g., those computing connecting homomorphisms or spectral sequences, as morphisms that should lift as morphisms of sheaves do not necessarily lift if they are naively modeled by $S$-module maps.
The obstructions to such lifts in $\A$ lie in the subcategory $\C$ and vanish in $\A/\C$ (cf.~Remark~\ref{rmrk:C_as_obstruction_to_lift}).

Our main result is to deduce the computability of $\A/\C$ from that of $\A$.

\begin{theorem} \label{thm:gabriel.computable}
  Let $\A$ be a constructively \nameft{Abel}ian category and $\C$ a thick subcategory.
  Assume that we can decide whether an object of $\A$ lies in $\C$.
  Then the category $\A/\C$ is constructively \nameft{Abel}ian.
\end{theorem}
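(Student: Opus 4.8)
The plan is to realize the \nameft{Serre} quotient through a calculus of fractions and to reduce every structural decision and construction in $\A/\C$ to a finite computation in $\A$ together with the membership test for $\C$. Write $\Sigma$ for the class of morphisms of $\A$ that become invertible in $\A/\C$, i.e.\ those $s$ with $\ker s,\coker s\in\C$. Since $\A$ is computable we can form $\ker s$ and $\coker s$, so by hypothesis membership in $\Sigma$ is decidable. Because $\C$ is thick, $\Sigma$ admits a (two-sided) calculus of fractions and $\A/\C=\A[\Sigma^{-1}]$, so I would represent a morphism $M\to N$ of $\A/\C$ by a span $M\xleftarrow{s}\hat M\xrightarrow{f}N$ with $s\in\Sigma$, the quotient functor $Q\colon\A\to\A/\C$ sending $\phi\in\Hom_\A(M,N)$ to the fraction with $s=\id_M$. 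This is the concrete model of a generalized (\nameft{Gabriel}) morphism on which all algorithms operate.

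First I would make $\Hom_{\A/\C}(M,N)$ a computable abelian group. The crucial observation is that $Q$ is exact and additive, so for honest morphisms $\phi,\psi\in\Hom_\A(M,N)$ one has $Q(\phi)=Q(\psi)$ iff $\img(\phi-\psi)\in\C$; since we can compute the image and test membership, equality of such morphisms is decidable. For two fractions I would first bring them over a common denominator, completing the relevant square by a pullback or pushout, which is computable as a kernel or cokernel of a difference map in $\A$, and thereby reduce equality (and being zero) to a single image-in-$\C$ test on the resulting numerator. Addition is handled the same way: common denominator, then add numerators. Composition of $M\xleftarrow{s}\hat M\xrightarrow{f}N$ with $N\xleftarrow{t}\hat N\xrightarrow{g}P$ is carried out by completing the square over $f$ and $t$ inside $\A$, thickness of $\C$ guaranteeing that the new denominator again lies in $\Sigma$.

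It remains to equip $\A/\C$ with kernels, cokernels, and the coimage--image isomorphism constructively. Here I would exploit exactness of $Q$ once more: for a morphism represented by $fs^{-1}$ with $f\colon\hat M\to N$ and $s\colon\hat M\to M$ in $\Sigma$, the denominator is already invertible in the quotient, so $\coker(fs^{-1})=Q(\coker f)$ and $\ker(fs^{-1})$ is $Q(\ker f)$ corrected by the isomorphism $Q(s)$, both computed in $\A$ and pushed through $Q$. Likewise the natural map $\coimg\to\img$ in $\A/\C$ is the image under $Q$ of the corresponding map in $\A$; since the latter is an isomorphism and $\A$ is computable, I can transport its inverse through $Q$, which verifies the decisive \nameft{Abel}ian axiom. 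The zero object and zero morphisms are inherited from $\A$, and by the decidability established above one can decide whether a given morphism of $\A/\C$ is zero, mono (iff $\ker f\in\C$), or epi (iff $\coker f\in\C$).

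The hard part will be making the fraction calculus genuinely algorithmic rather than merely invoking the Gabriel--Zisman axioms: the $\Hom$-set is defined as a filtered colimit that is a priori infinite, and the content of the theorem is exactly that decidability of $\C$-membership collapses it to a finite procedure. Concretely, the two delicate points are the common-denominator (normal-form) step, where I must produce an explicit $s\in\Sigma$ realizing a given fraction and certify $s\in\Sigma$ via computed kernels and cokernels, and the zero-test, which is precisely where the image-in-$\C$ criterion is indispensable. Once these are in place, all remaining \nameft{Abel}ian-category data transfer along the exact functor $Q$ by routine, if tedious, diagram computations in $\A$.
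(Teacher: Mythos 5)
Your proposal is correct in substance but takes a genuinely different route from the paper. You work with the two-arrow calculus of fractions: morphisms are spans $M\xleftarrow{s}\hat M\xrightarrow{f}N$ with $s$ in the class $\Sigma$ of maps whose kernel and cokernel lie in $\C$, and every operation (composition, addition, equality) is preceded by an Ore-square completion to a common denominator. The paper instead uses a three-arrow formalism: a \nameft{Gabriel} morphism is a triple (mono into the source, honest arrow, epi from the target) with cokernel of the mono and kernel of the epi in $\C$; these form an actual category $\Gen_\C(\A)$ enriched over commutative inverse monoids, and $\A/\C$ is recovered by factoring out a two-sided ideal, the zeroid. The decisive decidability reduction is identical in both settings --- a morphism vanishes in $\A/\C$ iff the image of its (adapted) numerator lies in $\C$, which is exactly the paper's characterization of the zeroid in Proposition~\ref{prop:Zeroid_of_Gabriel} --- so the computational core is the same. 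What the paper's formalism buys is that all arithmetic happens inside a genuine category on representatives, with the quotient deferred to a single membership test at the end, and that the subobject-of-the-source and factor-of-the-target data are kept separate, which is what makes the Lifting Lemma~\ref{lemm:lift_of_gabriel_morphisms} a transparent diagram chase; your formalism buys a leaner data structure at the price of an Ore completion before every operation. Note also that the paper's own discussion of the two-arrow \nameft{Gabriel}--\nameft{Zisman} calculus (Appendix~\ref{subsec:GZ}) assumes a computable \nameft{Gabriel} monad, i.e.\ that $\C$ is localizing; your version correctly avoids that extra hypothesis by admitting arbitrary denominators from $\Sigma$ rather than only the units $\eta_N$. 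The one place where you are too quick is the verification of the \nameft{Abel}ian axioms~\eqref{Ab_MonoLift} and~\eqref{Ab_EpiColift}: asserting that the coimage-to-image comparison is the image under $\QQ$ of an isomorphism in $\A$ establishes that $\A/\C$ is \nameft{Abel}ian, but the constructive content is the computation of the lift of an arbitrary morphism along an arbitrary mono of $\A/\C$, and carrying this out on representatives is precisely where the new denominators (equivalently, the non-full domains and codomains of Remark~\ref{rmrk:C_as_obstruction_to_lift}) are created; this is the content of the paper's Lifting Lemma and deserves an explicit construction rather than the label ``routine.''
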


This implies the following two corollaries which were the original motivation of this work.

\begin{theorem} \label{thm:proj.computable}
  Let $B$ be a computable commutative unitial ring with effective coset representatives (cf.~Definition~\ref{defn:coset.representatives}).
  Then the category $\Coh \PP^n_B$ is constructively \nameft{Abel}ian.
\end{theorem}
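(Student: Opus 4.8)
The plan is to prove Theorem~\ref{thm:proj.computable} by realizing $\Coh \PP^n_B$ as a Serre quotient of a constructively \nameft{Abel}ian category and then invoking Theorem~\ref{thm:gabriel.computable}. Concretely, I would take $S = B[x_0,\dots,x_n]$ with its standard grading, set $\A = \Sfpgrmod$ to be the category of finitely generated graded $S$-modules, and let $\C \subseteq \A$ be the subcategory of negligible modules, i.e.\ those with vanishing sheafification. The \nameft{Serre}--\nameft{Grothendieck} correspondence identifies $\Coh \PP^n_B$ with the \nameft{Serre} quotient $\A/\C$, so it suffices to check that the hypotheses of Theorem~\ref{thm:gabriel.computable} are met for this pair $(\A,\C)$.

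First I would verify that $\A = \Sfpgrmod$ is constructively \nameft{Abel}ian. Since $B$ is a computable commutative unital ring with effective coset representatives, the polynomial ring $S$ inherits enough algorithmic structure to run \nameft{Gröbner}- (or \nameft{Janet}-)basis computations; this is precisely what makes finitely presented graded $S$-modules computable, yielding algorithms for kernels, cokernels, images, and the solvability of linear systems over $S$. I would cite the module-computability result underlying \cite{BL} for the needed effective linear algebra over $S$. The role of effective coset representatives is to provide a normal form for elements of $B$ (and hence of $S$), which is exactly the ingredient a division/reduction algorithm requires in order to decide membership in submodules and to produce canonical representatives of quotients.

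Second I would verify the two remaining hypotheses: that $\C$ is a thick subcategory of $\A$, and that membership in $\C$ is decidable. Thickness of the negligible modules is the standard statement that the subcategory of modules with zero sheafification is closed under subobjects, quotients, and extensions; this is a \nameft{Serre} subcategory by the general theory, so it is in particular thick. For decidability I would use the characterization that a finitely generated graded $S$-module $M$ is negligible if and only if it is supported on the irrelevant ideal $\m = (x_0,\dots,x_n)$, equivalently if and only if every generator is annihilated by some power of $\m$, equivalently $M_{\geq d} = 0$ for $d \gg 0$. Given a presentation of $M$, this can be tested algorithmically: compute the saturation of the defining ideal or, more directly, check whether $\m^k M = 0$ for a bound $k$ obtained from the (computable) regularity or from the degrees of the presentation, which reduces to finitely many effective linear-algebra tests over $S$. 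This is the step I expect to be the main obstacle, since one must produce an explicit, terminating membership test and justify the degree bound $d$ at which vanishing of $M_{\geq d}$ becomes decidable; the dimension-$0$ support over $B$ rather than over a field is the delicate point, and it is here that the hypotheses on $B$ do genuine work.

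With $\A$ constructively \nameft{Abel}ian, $\C$ thick, and $\C$-membership decidable, Theorem~\ref{thm:gabriel.computable} applies and yields that $\A/\C \simeq \Coh \PP^n_B$ is constructively \nameft{Abel}ian, completing the proof.
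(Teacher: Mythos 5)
Your overall architecture is exactly the paper's: realize $\Coh \PP^n_B$ as $\Sfpgrmod/\Sfpgrmod^0$ via the sheafification equivalence (Theorem~\ref{thm:coh.equiv.local.modules}), establish constructive \nameft{Abel}ianness of $\A=\Sfpgrmod$ from \nameft{Gröbner} bases over $B[x_0,\dots,x_n]$ using effective coset representatives (Proposition~\ref{prop:projective.ring.computable} and Corollary~\ref{coro:SfpZgrmod_computable}), and then invoke Theorem~\ref{thm:gabriel.computable}. The one place you diverge is the decidability of membership in $\C=\Sfpgrmod^0$, which you correctly identify as the delicate step. The paper's Lemma~\ref{lemm:decide_quasi_zero} handles it by dehomogenization: restrict $\grM$ to each standard affine chart, i.e.\ form $(\grM_{x_i})_0$ over $(\grS_{x_i})_0 = B[x_0/x_i,\dots,x_n/x_i]$ using the same presentation matrix, and test whether each of these $n+1$ finitely presented modules over a computable ring is zero. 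This reduces the question entirely to effective linear algebra over polynomial rings over $B$ and needs no degree bound, no regularity, and no saturation.

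Your proposed test via support on the irrelevant ideal $\m=(x_0,\dots,x_n)$ is based on a correct characterization, but as written it has a gap: you do not actually produce a terminating algorithm. The appeal to a bound $k$ with $\m^k\grM=0$ ``obtained from the (computable) regularity'' is the problematic part --- \nameft{Castelnuovo}--\nameft{Mumford} regularity is not available in the needed form over a general base $B$ (this is precisely the ``dimension-$0$ support over $B$'' difficulty you flag), and the degrees of the presentation alone do not yield such a bound. The saturation variant can be repaired without any bound: compute the ascending chain $(0:_\grM \m^k)$ and detect stabilization by comparing successive terms (\nameft{Noether}ianity guarantees termination, effective linear algebra over $\grS$ decides equality), then test whether the stable value is all of $\grM$; this is essentially the annihilator-plus-radical-membership route the paper uses in the toric case (Proposition~\ref{prop:decide_quasi_zero_toric}). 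So your proof is salvageable as stated only after replacing the regularity-bound argument by either the stabilizing-saturation computation or, more simply, the paper's chart-restriction test.
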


Examples for such a ring $B$ are fields, rings of integers, or polynomial rings.

\begin{theorem} \label{thm:toric.computable}
  Let $X_\Sigma$ be a smooth toric variety without torus factors.
  Then the category $\Coh X_\Sigma$ is constructively \nameft{Abel}ian.
\end{theorem}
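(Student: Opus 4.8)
The plan is to exhibit $\Coh X_\Sigma$ as a \nameft{Serre} quotient of a computable module category and then invoke Theorem~\ref{thm:gabriel.computable}, exactly as in the proof of Theorem~\ref{thm:proj.computable} but with the graded polynomial ring replaced by the \nameft{Cox} ring. First I would recall \nameft{Cox}'s description of coherent sheaves on $X_\Sigma$. Let $S$ be the \nameft{Cox} (homogeneous coordinate) ring of $X_\Sigma$ over the computable ground field, i.e.\ the polynomial ring $S = k[x_\rho \mid \rho \in \Sigma(1)]$ graded by the divisor class group $\Cl(X_\Sigma)$. Because $X_\Sigma$ is smooth and has no torus factors, the rays span, the class group coincides with the \nameft{Picard} group, and the character lattice injects into $\Z^{\Sigma(1)}$ so that $\Cl(X_\Sigma)$ is a genuine (finitely generated) grading group; \nameft{Cox}'s quotient construction then yields an equivalence
\[
  \Coh X_\Sigma \;\simeq\; \Sfpgrmod / \C,
\]
where $\Sfpgrmod$ is the category of finitely generated $\Cl(X_\Sigma)$-graded $S$-modules and $\C$ is the subcategory of negligible modules, i.e.\ those $M$ with zero sheafification $\widetilde{M} = 0$. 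Since sheafification is exact, $\C$ is thick. This is the toric analogue of the presentation $\Coh \PP^n \simeq \Sfpgrmod/\C$ underlying Theorem~\ref{thm:proj.computable}.

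Next I would verify the two remaining hypotheses of Theorem~\ref{thm:gabriel.computable}. As $S$ is a graded \nameft{Noether}ian polynomial ring, the category $\A := \Sfpgrmod$ is constructively \nameft{Abel}ian by \nameft{Gröbner} basis methods \cite{BL}; here one only has to note that those constructions go through for a grading by an arbitrary finitely generated abelian group, so that possible torsion in $\Cl(X_\Sigma)$ causes no difficulty. For the decidability hypothesis I would give an effective criterion for membership in $\C$. Writing $\bb \subseteq S$ for the irrelevant monomial ideal of $\Sigma$, a finitely generated module $M$ is negligible precisely when $\widetilde{M} = 0$, equivalently when $\Supp M \subseteq V(\bb)$, equivalently when $\bb \subseteq \rad(\Ann M)$.

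The latter is a radical-membership question: $M \in \C$ iff each of the finitely many monomial generators $g$ of $\bb$ satisfies $g^{\,m} M = 0$ for some $m$. The annihilator $\Ann M$ is a homogeneous ideal computable by a colon/syzygy computation, and radical membership $g \in \rad(\Ann M)$ is then decidable by a \nameft{Gröbner} basis computation (e.g.\ via the \nameft{Rabinowitsch} trick, testing $1 \in \Ann M + (1 - t\,g)$ in $S[t]$). With both hypotheses in hand, Theorem~\ref{thm:gabriel.computable} shows that $\Sfpgrmod/\C$, and hence $\Coh X_\Sigma$, is constructively \nameft{Abel}ian.

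The main obstacle is the careful justification of the two inputs to the quotient construction. On the one hand, one must check that the \nameft{Gröbner} machinery of \cite{BL} genuinely applies to the $\Cl(X_\Sigma)$-grading, where the hypotheses "smooth" and "without torus factors" are precisely what make the grading group and \nameft{Cox}'s quotient presentation well-behaved. On the other hand, one must confirm that "negligible" coincides, on finitely generated modules, with the saturation/torsion condition $\bb \subseteq \rad(\Ann M)$ that the \nameft{Gröbner} engine can actually test; establishing this equivalence of the categorical kernel of sheafification with a computable ideal-membership condition is the technical heart of the argument.
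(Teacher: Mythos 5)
Your proposal is correct and follows essentially the same route as the paper: present $\Coh X_\Sigma$ as $\Sfpgrmod/\Sfpgrmod^0$ via the $\Cl(X_\Sigma)$-graded \nameft{Cox} ring, note that Gröbner bases handle the multigrading so the module category is constructively \nameft{Abel}ian, decide membership in the kernel of sheafification via the smooth-case criterion $B(\Sigma) \subseteq \rad(\Ann M)$ using annihilators and the \nameft{Rabinowitsch} trick, and conclude with Theorem~\ref{thm:gabriel.computable}. You also correctly flag that the radical-membership characterization is exactly where smoothness is needed, matching the paper's caveat that it fails for simplicial nonsmooth toric varieties.
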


In Section~\ref{sec:gabriel_morphism} we introduce the notion of generalized and \nameft{Gabriel} morphisms.
They are a $3$-arrow formalism for localization, already implicit in \cite{Tohoku,Gab_thesis}, and a computer friendly data structure.
%\todo{of what we call a von Neumann localization} 
In Section~\ref{sec:computability_quotient} we use \nameft{Gabriel} morphisms to provide a constructive description of morphisms in \nameft{Serre} quotient categories.
This leads to a proof of Theorem~\ref{thm:gabriel.computable}, which directly describes a computer implementation of \nameft{Serre} quotient categories.
In Section~\ref{sec:sheaves} we apply the abstract results for \nameft{Serre} quotient categories to categories of sheaves on projective and toric varieties and prove Theorems~\ref{thm:proj.computable} and \ref{thm:toric.computable}.
We demonstrate that our approach is practical by a spectral sequence computation in Appendix~\ref{sec:ex} using our computer implementation in the \texttt{homalg} project \cite{homalg-project}.

%\todo{Paragraph präzisieren und überarbeiten evtl aus der Einleitung raus}
Recently, a similar $3$-arrow formalism was introduced for \nameft{Quillen} model categories admitting a functorial factorization \cite{dhks}, and a more general $3$-arrow formalism was developed for uni-fractional categories by \nameft{S.~Thomas}.
The novelty of our formalism is that it constructs a new category $\Gen_\C(\A)$ and recovers $\A/\C$ (also treated in \cite[Example~7.7.b]{ThomasArrow}) by factoring out a two-sided ideal which we call the zeroid, following \nameft{Bourne} and \nameft{Zassenhaus}.
When considered in a constructive setup \nameft{Thomas} verified that his formalism defines a category (with the exception of deciding equality of morphisms), whereas in our proof of Theorem~\ref{thm:gabriel.computable} we constructively verify all axioms of an \nameft{Abel}ian category.

In the above discussion we didn't assume the existence of a section functor $\SS: \A/\C \to \A$, right adjoint to the exact localization functor $\QQ:\A \to \A/\C$.
In Appendix~\ref{sec:reflective} we prove alternative versions of Theorem~\ref{thm:gabriel.computable} once $\SS$ is computable, for example using computations in the essential image of $\SS$ or using the \nameft{Gabriel}-\nameft{Zisman}-localization \cite{GabZis,SimpsonGZComputer}.
However, computer experiments suggest that these alternative approaches are less efficient than the approach using \nameft{Gabriel} morphisms.
Still, the section functor plays a major role in our treatment of the computability of $\Hom$ and $\Ext$ in \nameft{Serre} quotients categories \cite{BL_Sheaves,BL_ExtComputability}.

\medskip
\textbf{Convention:} In this paper, we use the postfix notation for the composition, i.e., $fg:=g\circ f$.

\section{Generalized morphisms and \nameft{Gabriel} morphisms}\label{sec:gabriel_morphism}

In this section we introduce the $3$-arrow calculus of generalized morphisms.
This paper uses a subclass of generalized morphisms, called \nameft{Gabriel} morphisms, as the ``data structure'' for morphisms in a \nameft{Serre} quotient category $\A/\C$.
These morphisms are the key ingredient to prove the computability of $\A/\C$ as an \nameft{Abel}ian category in Section~\ref{sec:computability_quotient}.

There is a second application of generalized morphisms in constructive homological algebra.
They provide a simple constructive description of spectral sequences of filtered complexes where the differentials of the successive pages were defined by closed formulas \cite{BaSF}.

Arrow diagrams of morphisms in \nameft{Abel}ian categories induce isomorphisms between various subfactors of the involved objects.
To visualize these isomorphisms we occasionally display the corresponding \nameft{Hasse} diagrams of modular subobject lattices together with the order-preserving maps induced by the morphisms.
The zero subobject is located at the top of each \nameft{Hasse} diagram.
Subfactor objects correspond to intervals, subobjects to intervals starting at the top, and factor objects to intervals ending at the bottom.
For more details see \cite{BaSF}.
%\todo{kannst du (kurz!) eine Begruendung tippen?}

\subsection{Generalized morphisms} %\label{subsec:gabriel.morphisms.computability}

From now on $\A$ denotes an \nameft{Abel}ian category.

\begin{defn}[\cite{BaSF}]
  Let $M,N$ be two objects in $\A$.
  A \textbf{generalized morphism} of $\A$ with \textbf{source} $M$ and \textbf{target} $N$ is an equivalence class of triples of $\A$-morphisms\footnote{Necessarily $N_\psi \cong N/\ker \jmath_\psi$.}
  \[
    \psi:=[M \stackrel{\imath_\psi}{\hookleftarrow} M^\psi \stackrel{\overunderline{\mbox{\tiny $\psi$}}}{\longrightarrow} N_\psi \stackrel{\jmath_\psi}{\twoheadleftarrow} N]
  \]
  such that the \textbf{domain} $\imath_\psi:M^\psi \hookrightarrow M$ \textbf{of $\psi$} is a mono and the \textbf{codomain} $\jmath_\psi:N \twoheadleftarrow N_\psi$ \textbf{of $\psi$} is an epi.
  We call $\overunderline{\psi}$ the $\A$-morphism \textbf{associated} to $\psi$.
  Two such triples are equivalent if
  \begin{itemize}
    \item their sources and targets \emph{coincide} and
    \item $(M \stackrel{\imath_\psi}{\hookleftarrow} M^\psi \stackrel{\overunderline{\mbox{\tiny $\psi$}}}{\longrightarrow} N_\psi \stackrel{\jmath_\psi}{\twoheadleftarrow} N) \sim (M \stackrel{\imath_\phi}{\hookleftarrow} M^\phi \stackrel{\overunderline{\mbox{\tiny $\phi$}}}{\longrightarrow} N_\phi \stackrel{\jmath_\phi}{\twoheadleftarrow} N)$ if there exists isomorphisms $\mu: M^\phi \to M^\psi$ and $\nu:N_\phi \to N_\psi$ (then they are necessarily the unique lift $\mu = \liftofCalongB{\imath_\psi}{\imath_\phi}$ and the unique colift $\nu = \coliftofCalongB{\jmath_\phi}{\jmath_\psi}$, cf.~\ref{sec:ComputableAbelian}.\eqref{Ab_DecideZeroEffectively},\eqref{Ab_colift}) such that the following diagram commutes
  \begin{center}
  \begin{tikzpicture}[label/.style={postaction={
      decorate,
      decoration={markings, mark=at position .5 with \node #1;}}},scale=0.8]
    
    \coordinate (r) at (4.5,0);
    \coordinate (u) at (0,1.6);
    
    \node (M) {$M$};
    \node (N) at (r) {$N$};
    \node (Mpsi) at ($-1*(u)$) {$M^\psi$};
    \node (Npsi) at ($(N)-(u)$) {$N_\psi$};
    \node (Mphi) at (u) {$M^\phi$};
    \node (Nphi) at ($(N)+(u)$) {$N_\phi$};
    
    \draw[-stealth'] (Mpsi) -- node[above]{$\overunderline{\psi}$} (Npsi);
    \draw[left hook-stealth'] (Mpsi) -- node[left] {$\imath_\psi$} (M);
    \draw[-doublestealth] (N) -- node[right] {$\jmath_\psi$} (Npsi);
    \draw[-stealth'] (Mphi) -- node[above]{$\overunderline{\phi}$} (Nphi);
    \draw[right hook-stealth'] (Mphi) -- node[left] {$\imath_\phi$} (M);
    \draw[-doublestealth] (N) -- node[right] {$\jmath_\phi$} (Nphi);
    
    \draw[bend left=60,-stealth',label={[right]{$\mu$}}] (Mphi) to (Mpsi);
    \draw[bend right=60,-stealth',label={[left]{$\nu$}}] (Nphi) to (Npsi);
    
  \end{tikzpicture}
  \end{center}
  \end{itemize}

For such an equivalence class we write more elaborately
\begin{center}
\begin{tikzpicture}[C/.style={color=red,line width=2pt,dotted},align=left,
  map/.style={color=gray,dashed},behind/.style={opacity=0.5},image/.style={color=blue,line width=3},
  image/.style={color=blue,line width=3},
  scale=0.8]
  
  \node (M) {$M$};
  \node (N) at (2,0) {$N$};
  \node (Mpsi) at (0,-2) {$M^\psi$};
  \node (Npsi) at (2,-2) {$N_\psi$};
  
  \draw[-stealth'] (Mpsi) -- node[above]{$\overunderline{\psi}$} (Npsi);
  \draw[left hook-stealth'] (Mpsi) -- node[left] {$\imath_\psi$} (M);
  \draw[-doublestealth] (N) -- node[right] {$\jmath_\psi$} (Npsi);
  \draw[dotted,-stealth'] (M) -- node[above]{$\psi$} (N);

\begin{scope}[xshift=13em,yshift=-2em]
  \node {with corresponding \\ \nameft{Hasse} diagram of $\overunderline{\psi}$};
\end{scope}

\begin{scope}[xshift=25em,yshift=2em]
  \coordinate (a) at (0,-0.9);
  \coordinate (b) at ($2*(a)$);
  \coordinate (c) at ($0.75*(a)$);
  \coordinate (d) at (2,0);
  \coordinate (e) at ($1.2*(a)$);
  \coordinate (f) at ($0.9*(a)$);
  \coordinate (right) at (0.2,0);
  \coordinate (left) at ($-1*(right)$);
   
  \coordinate (0_M);
  \coordinate (ker_phi) at ($(0_M)+(a)$);
  \coordinate (domain) at ($(ker_phi)+(b)$);
  \coordinate (M) at ($(domain)+(c)$);
  \coordinate (codomain) at ($(ker_phi)+(d)$);
  \coordinate (0_N) at ($(codomain)-(f)$);
  \coordinate (im_phi) at ($(codomain)+(b)$);
  \coordinate (N) at ($(im_phi)+(e)$);
  
  %\filldraw[line width=4,color=white,fill=blue!30] \FillArea{(ker_phi)}{(codomain)}{(im_phi)}{(domain)};
  
  \draw (0_M) -- (ker_phi);
  \draw[image] (ker_phi) -- (domain);
  \draw[image] (codomain) -- (im_phi);
  \draw (im_phi) -- (N);
  
  \draw[C] (domain) -- (M);
  \draw[C] (0_N) -- (codomain);
  
  \draw[map,-stealth'] ($(domain)+(right)$) -- ($(im_phi)+(left)$);
  \draw[map,-stealth'] ($(ker_phi)+(right)$) -- ($(codomain)+(left)$);
  %\node at ($0.5*(domain)+0.5*(ker_phi)-(3.5,0)$) {$\psi:$};
  
  \fill (0_M) circle (2pt) node[left] {$0_M$};
  \fill (ker_phi) circle (2pt) node[left] {$\ker\psi$};
  \fill (domain) circle (2pt) node[left] {$M^\psi\cong\img\imath_\psi$};
  \fill (M) circle (2pt) node[left] {$M$};
  \fill (codomain) circle (2pt) node[right] {$\ker \jmath_\psi$};
  \fill (0_N) circle (2pt) node[right] {$0_N$};
  \fill (im_phi) circle (2pt) node[right] {$\img\psi$};
  \fill (N) circle (2pt) node[right] {$N$};
  
  \node at ($0.2*(ker_phi)+0.3*(im_phi)+0.2*(codomain)+0.3*(domain)$) {\color{gray} $\overunderline{\psi}$};
  
  \draw[decorate,decoration=brace] ($(codomain)+(1.5,0)$) -- node[right] {$\cong N_\psi$} ($(N)+(1.5,0)$);
  
\end{scope}
\end{tikzpicture}
\end{center}
The subfactors isomorphic to the image ($\cong$ coimage) of $\overunderline{\psi}$ are marked by thick (blue) lines.
The cokernel of the domain $\imath_\psi$ and the kernel of the codomain $\imath_\psi$ are marked by dotted (red) lines.

We call a domain or codomain \textbf{full} if it is an isomorphism\footnote{This does not depend on the representing triple.}, and non-full otherwise.
A generalized morphism with full domain and codomain is called \textbf{honest}\index{morphism!honest}.
Every honest morphism $\psi:=[M \xleftarrow[{}^\sim]{\imath_\psi} M^\psi \stackrel{\overunderline{\mbox{\tiny $\psi$}}}{\longrightarrow} N_\psi \xleftarrow[{}^\sim]{\jmath_\psi} N]$ has a unique representative $M \stackrel{1_M}{\longleftarrow} M \xrightarrow{\imath_\psi^{-1}\, \overunderline{\mbox{\tiny $\psi$}}\, \jmath_\psi^{-1}} N \stackrel{1_N}{\longleftarrow} N$.
Conversely, for any $\A$-morphism $\phi:M \to N$ we call $[M \stackrel{1_M}{\longleftarrow} M \stackrel{\phi}{\to} N \stackrel{1_N}{\longleftarrow} N]$ the honest morphism \textbf{induced by $\phi$}.

We write $(\imath_\psi,\overunderline{\psi},\jmath_\psi):M \to N$ as a shorthand for the triple $M \stackrel{\imath_\psi}{\hookleftarrow} M^\psi \stackrel{\overunderline{\mbox{\tiny $\psi$}}}{\longrightarrow} N_\psi \stackrel{\jmath_\psi}{\twoheadleftarrow} N$ and $[\imath_\psi,\overunderline{\psi},\jmath_\psi]$ for the generalized morphism $[(\imath_\psi,\overunderline{\psi},\jmath_\psi)]$.
\end{defn}

\subsection{The category \texorpdfstring{$\Gen(\A)$}{G(A)} of generalized morphisms}

We start with defining the composition of generalized morphisms, following \cite[III.1]{Gab_thesis}.

\begin{defn} \label{defn:composition_generalized}
Let $\phi:=[\imath_\phi,\overunderline{\phi},\jmath_\phi]:L\to M$ and $\psi:=[\imath_\psi,\overunderline{\psi},\jmath_\psi]:M\to N$ be two generalized morphisms of $\A$.
The morphism $\alpha:=\imath_\psi\jmath_\phi:M^\psi \to M_\phi$ naturally gives rise to the epi $\jmath:M^{\psi} \twoheadrightarrow \coimg \alpha$, the mono $\imath:\img \alpha \hookrightarrow M_\phi$, and the isomorphism $\widetilde{\alpha}:\coimg \alpha \to \img \alpha$.
Denote by $\widetilde{\imath}$ and $\widetilde{\phi}$ the pullback morphisms of the cospan $(\imath,\overunderline{\phi})$, as indicated in the commutative diagram below.
Analogously, denote the pushout morphisms of the span $(\jmath,\overunderline{\psi})$ by $\widetilde{\jmath}$ and $\widetilde{\psi}$.
\begin{center}
\begin{tikzpicture}[C/.style={color=red,line width=1.5pt,dotted}]
  \coordinate (right) at (3.9,0);
  \coordinate (above) at (0,1.4);
  % places
    \coordinate (_Ma) at (0,0);
    \node (Ma) at (_Ma) {$M$};
    \coordinate (_Mb) at ($(_Ma)+(right)$);
    \node (Mb) at (_Mb) {$M$};
    \coordinate (_L) at ($(_Ma)-(right)$);
    \node (L) at (_L) {$L$};
    \coordinate (_N) at ($(_Mb)+(right)$);
    \node (N) at (_N) {$N$};
    \coordinate (_L1) at ($(_L)-(above)$);
    \node (L1) at (_L1) {$L^{\phi}$};
    \coordinate (_pullback) at ($(_L1)-(above)$);
    \node (pullback) at (_pullback) {$L^{\phi \psi} := \operatorname{pullback}(\imath,\overunderline{\phi})$};
%     \coordinate (_M12) at ($(_Ma)-(above)$);
%     \node (M12) at (_M12) {\phantom{$M_{\phi}$}};
    \coordinate (_M1) at ($(_Ma)-(above)$);
    \node (M1) at (_M1) {$M_{\phi}$};
    \coordinate (_M2) at ($(_Mb)-(above)$);
    \node (M2) at (_M2) {$M^{\psi}$};
    \coordinate (_imbeta) at ($(_M1)-(above)$);
    \node (imbeta) at (_imbeta) {$\img\alpha$};
    \coordinate (_coimbeta) at ($(_M2)-(above)$);
    \node (coimbeta) at (_coimbeta) {$\coimg\alpha$};
    \coordinate (_N2) at ($(_N)-(above)$);
    \node (N2) at (_N2) {$N_{\psi}$};
    \coordinate (_pushout) at ($(_N2)-(above)$);
    \node (pushout) at (_pushout) {$\operatorname{pushout}(\jmath,\overunderline{\psi}) =: N_{\phi \psi}$};
  %arrows
    \draw[double equal sign distance,gray] (Ma)     -- node[above] {$1_M$}                                     (Mb);
    \draw[right hook-stealth',thick] (L1)                    -- node[left] {$\imath_\phi$}                     (L);
    \draw[right hook-stealth',thick] (pullback)              -- node[left] {$\widetilde{\imath}$}              (L1);
    \draw[-stealth',thick] (pullback)                       -- node[below] {$\widetilde{\phi}$}               (imbeta);
    \draw[-doublestealth,gray] (Ma)                            -- node[right] {$\jmath_\phi$}                    (M1);
    \draw[right hook-stealth',gray] (M2)                    -- node[left] {$\imath_\psi$}                     (Mb);
    \draw[right hook-stealth',gray] (imbeta)                -- node[right] {$\imath$}                         (M1);
    \draw[-doublestealth,gray] (M2)                            -- node[left] {$\jmath$}                          (coimbeta);
    \draw[-doublestealth,thick] (N)                             -- node[right] {$\jmath_\psi$}                    (N2);
    \draw[-doublestealth,thick] (N2)                            -- node[right] {$\widetilde{\jmath}$}             (pushout);
    \draw[-stealth',gray]  (L1)                            -- node[above] {$\overunderline{\phi}$}    (M1);
    \draw[-stealth',gray]  (M2)                            -- node[above] {$\overunderline{\psi}$}    (N2);
    \draw[-stealth',gray]  (M2)                            -- node[above] {$\alpha$}                         (M1);
    \draw[-stealth',thick]  (coimbeta)                      -- node[below] {$\widetilde{\psi}$}               (pushout);
    \draw[-stealth',thick] (imbeta)                         -- node[above] {$\sim$} node[below] {$\left(\widetilde{\alpha}\right)^{-1}$} (coimbeta);
    \draw[-stealth',dotted,gray] (L)                       -- node[above] {$\phi$}                           (Ma);
    \draw[-stealth',dotted,gray] (Mb)                      -- node[above] {$\psi$}                           (N);
\end{tikzpicture}
\end{center}
Finally define the \textbf{composition of the generalized morphisms} $\phi,\psi$ as
\[
  \phi\psi = \left[ \imath_{\phi \psi}, \overunderline{\phi \psi}, \jmath_{\phi \psi} \right]
  :=
  \left[\widetilde{\imath}\,\imath_\phi, \big(\widetilde{\phi}\left(\widetilde{\alpha}\right)^{-1}\widetilde{\psi}\big),\jmath_\psi\widetilde{\jmath}\right]:L \to N \mbox{,}
\]
i.e., the compositions of the outer left, bottom, and right morphisms in the above diagram.
\end{defn}

\journal{}{
The four morphisms $L^\phi \stackrel{\overline{\underline{\mbox{\tiny $\phi$}}}}{\longrightarrow} M_\phi \twoheadleftarrow M \hookleftarrow M^\psi \stackrel{\overline{\underline{\mbox{\tiny $\psi$}}}}{\longrightarrow} N_\psi$ are displayed in the diagram below, where $L^\phi$ is identified with a subobject of $L$ and $N_\psi$ with a factor object of $N$.
The subfactors isomorphic to the image ($\cong$ coimage) of the $\A$-morphism $\overunderline{\phi \psi}$ are marked by thick (blue) lines.
Subfactors of cokernels of domains and subfactors of kernels of codomains are marked by dotted lines.
\begin{center}
\vskip -0.2em
\begin{tikzpicture}[
  C/.style={color=red,line width=2pt,dotted},
  C1/.style={color=magenta!80!black,line width=2pt,dotted},
  C2/.style={color=yellow!50!orange,line width=2pt,dotted},
  map/.style={color=gray,dashed},behind/.style={opacity=0.5},
  image1/.style={color=darkgreen,line width=1},
  image2/.style={color=cyan,line width=1},
  image/.style={color=blue,line width=3}]
  
  \newcommand{\circlewidth}{1.5pt}
  
  %translations
  \coordinate (a) at (0.4,0.6);
  \coordinate (b) at ($0.8*(a)$);
  \coordinate (c) at ($1.2*(a)$);
  \coordinate (d) at (-0.8,1);
  \coordinate (e) at ($0.8*(d)$);
  \coordinate (f) at (0,1.0);
  \coordinate (g) at ($1.2*(d)$);
  \coordinate (h) at ($0.9*(a)$);
  \coordinate (i) at ($0.7*(a)$);
  \coordinate (j) at (2.7,0);
  \coordinate (k) at (0,-1);
  
  \coordinate (o);
  
  %coordinates
  \coordinate (M) at (o);
  \coordinate (Z1+Z2) at ($(M)+(a)$);
  \coordinate (Z1+B2) at ($(Z1+Z2)+(d)$);
  \coordinate (Z1) at ($(Z1+B2)+(e)$);
  \coordinate (B1+Z2) at ($(Z1+Z2)+(b)$);
  \coordinate (bbb) at ($(B1+Z2)+(d)$);
  \coordinate (B1+B2) at ($(bbb)+(f)$);
  \coordinate (int_Z2) at ($(bbb)+(e)$);
  \coordinate (B1+int_B2) at ($(int_Z2)+(f)$);
  \coordinate (B1) at ($(B1+int_B2)+(g)$);
  \coordinate (Z2) at ($(B1+Z2)+(c)$);
  \coordinate (Z1_Z2+B2) at ($(Z2)+(d)$);
  \coordinate (Z1_Z2) at ($(Z1_Z2+B2)+(e)$);
  \coordinate (B2+int_B2) at ($(Z1_Z2+B2)+(f)$);
  \coordinate (int_B2) at ($(Z1_Z2)+(f)$);
  \coordinate (B2) at ($(B2+int_B2)+(h)$);
  \coordinate (B2_int_B2) at ($(int_B2)+(h)$);
  \coordinate (B1_Z2) at ($(int_B2)+(g)$);
  \coordinate (B1_B2) at ($(B1_Z2)+(h)$);
  \coordinate (0) at ($(B1_B2)+(i)$);
  
  %fill
  %\filldraw[line width=4,color=white,fill=blue!30] \FillArea{(B1+B2)}{(B1+int_B2)}{(int_Z2)}{(bbb)};
  %\filldraw[line width=4,color=white,fill=blue!30] \FillArea{(B1+B2)}{(bbb)}{(Z1_Z2+B2)}{(B2+int_B2)};
  
  %lines
  \draw[C,behind] (int_Z2) -- (Z1_Z2);
  \draw[behind] (Z1_Z2+B2) -- (Z1_Z2);
  \draw[image,behind] (Z1_Z2) -- (int_B2);
  \draw[image] (int_Z2) -- (B1+int_B2);
  \draw[image] (Z1_Z2+B2) -- (B2+int_B2);
  
  \draw[C] (M) -- (Z1+Z2);
  \draw[C1] (Z1+B2) -- (bbb);
  \draw[C1] (Z1+Z2) -- (B1+Z2);
  \draw[C] (B1+Z2) -- (Z2);
  \draw[C] (bbb) -- (Z1_Z2+B2);
  \draw[C] (B1+B2) -- (B2+int_B2);
  \draw[C] (B1+int_B2) -- (int_B2);
  \draw[C2] (B1_Z2) -- (B1_B2);
  \draw[C] (B1) -- (B1_Z2);
  \draw[C2] (B2+int_B2) -- (B2);
  \draw[C2] (int_B2) -- (B2_int_B2);
  \draw[C1] (Z1) -- (int_Z2);
  \draw[C] (B1_B2) -- (0);
  
  \draw[image2] (Z1+Z2) -- (Z1+B2);
  \draw (Z1+B2) -- (Z1);
  \draw[image2] (B1+Z2) -- (bbb);
  \draw (bbb) -- (int_Z2);
  \draw (B1+B2) -- (B1+int_B2);
  \draw[image1] (B1+int_B2) -- (B1);
  \draw[image2] (Z2) -- (Z1_Z2+B2);
  \draw[image1] (int_B2) -- (B1_Z2);
  \draw[image1] (B2_int_B2) -- (B1_B2);
  \draw (B2+int_B2) -- (int_B2);
  \draw (B2) -- (B2_int_B2);
  \draw[image] (bbb) -- (B1+B2);
  
  %bullets
  \fill (M) circle (\circlewidth);
  \fill (Z1+Z2) circle (\circlewidth);
  \fill (Z1+B2) circle (\circlewidth);
  \fill (Z1) circle (\circlewidth);
  \fill (B1+Z2) circle (\circlewidth);
  \fill (bbb) circle (\circlewidth);
  \fill (B1+B2) circle (\circlewidth);
  \fill (int_Z2) circle (\circlewidth);
  \fill (B1+int_B2) circle (\circlewidth);
  \fill (B1) circle (\circlewidth);
  \fill (Z2) circle (\circlewidth);
  \fill (Z1_Z2+B2) circle (\circlewidth);
  \fill (B2+int_B2) circle (\circlewidth);
  \fill (int_B2) circle (\circlewidth);
  \fill (B2) circle (\circlewidth);
  \fill (B2_int_B2) circle (\circlewidth);
  \fill (B1_Z2) circle (\circlewidth);
  \fill (B1_B2) circle (\circlewidth);
  \fill (0) circle (\circlewidth);
  \fill[behind] (Z1_Z2) circle (\circlewidth);

  %nodes
  \node at (M) [right] {$M$};
  \node at (Z1) [below left=-0.2em] {$\img \phi$};
  \node at (B1) [above left] {$\ker\jmath_\phi$};
  \node at (Z2)[below right] {$\img \imath_\psi$};
  \node at (B2) [above right=-0.2em] {$\ker\psi$};
  \node at (0) [left] {$0_M$};
  
  %%%%%%%%
  % M^\psi
  %%%%%%%%
  
  %coordinates
  \coordinate (M_psi) at ($(o)+(j)$);
  \coordinate (Z1+Z2_psi) at ($(M_psi)+(a)$);
  \coordinate (Z1+B2_psi) at ($(Z1+Z2_psi)+(d)$);
  \coordinate (Z1_psi) at ($(Z1+B2_psi)+(e)$);
  \coordinate (B1+Z2_psi) at ($(Z1+Z2_psi)+(b)$);
  \coordinate (bbb_psi) at ($(B1+Z2_psi)+(d)$);
  \coordinate (B1+B2_psi) at ($(bbb_psi)+(f)$);
  \coordinate (int_Z2_psi) at ($(bbb_psi)+(e)$);
  \coordinate (B1+int_B2_psi) at ($(int_Z2_psi)+(f)$);
  \coordinate (B1_psi) at ($(B1+int_B2_psi)+(g)$);
  \coordinate (Z2_psi) at ($(B1+Z2_psi)+(c)$);
  \coordinate (Z1_Z2+B2_psi) at ($(Z2_psi)+(d)$);
  \coordinate (Z1_Z2_psi) at ($(Z1_Z2+B2_psi)+(e)$);
  \coordinate (B2+int_B2_psi) at ($(Z1_Z2+B2_psi)+(f)$);
  \coordinate (int_B2_psi) at ($(Z1_Z2_psi)+(f)$);
  \coordinate (B2_psi) at ($(B2+int_B2_psi)+(h)$);
  \coordinate (B2_int_B2_psi) at ($(int_B2_psi)+(h)$);
  \coordinate (B1_Z2_psi) at ($(int_B2_psi)+(g)$);
  \coordinate (B1_B2_psi) at ($(B1_Z2_psi)+(h)$);
  \coordinate (0_psi) at ($(B1_B2_psi)+(i)$);
  
  %lines
  
  \draw[C2] (B1_Z2_psi) -- (B1_B2_psi);
  \draw[C2] (B2+int_B2_psi) -- (B2_psi);
  \draw[C2] (int_B2_psi) -- (B2_int_B2_psi);
  \draw[C] (B1_B2_psi) -- (0_psi);
  
  \draw[image2] (Z2_psi) -- (Z1_Z2+B2_psi);
  \draw (Z1_Z2+B2_psi) -- (Z1_Z2_psi);
  \draw[image] (Z1_Z2_psi) -- (int_B2_psi);
  \draw[image1] (int_B2_psi) -- (B1_Z2_psi);
  \draw[image1] (B2_int_B2_psi) -- (B1_B2_psi);
  \draw[image] (Z1_Z2+B2_psi) -- (B2+int_B2_psi);
  \draw (B2+int_B2_psi) -- (int_B2_psi);
  \draw (B2_psi) -- (B2_int_B2_psi);
  
  %bullets
  \fill (Z2_psi) circle (\circlewidth);
  \fill (Z1_Z2+B2_psi) circle (\circlewidth);
  \fill (B2+int_B2_psi) circle (\circlewidth);
  \fill (Z1_Z2_psi) circle (\circlewidth);
  \fill (int_B2_psi) circle (\circlewidth);
  \fill (B2_psi) circle (\circlewidth);
  \fill (B2_int_B2_psi) circle (\circlewidth);
  \fill (B1_Z2_psi) circle (\circlewidth);
  \fill (B1_B2_psi) circle (\circlewidth);
  \fill (0_psi) circle (\circlewidth);
  
  \node at (Z2_psi) [below] {$M^\psi$};
  \node at (B1_Z2_psi) [left] {$\ker \alpha$};
  \node at (0_psi) [right] {$0_{M^\psi}$};
  
  %arrows
  \draw[map,left hook-stealth'] ($(0_psi)-0.1*(j)$) -- ($(0)+0.1*(j)$);
  \draw[map,left hook-stealth'] ($(Z2_psi)-0.1*(j)$) -- ($(Z2)+0.1*(j)$);
  
  %%%%%%%%
  % N
  %%%%%%%%
  
  %coordinates
  \coordinate (im2) at ($(Z2_psi)+0.5*(j)$);
  \coordinate (U2) at ($(Z1_Z2+B2_psi)+0.5*(j)$);
  \coordinate (D2) at ($(B2+int_B2_psi)+0.5*(j)$);
  \coordinate (codomain2) at ($(B2_psi)+0.5*(j)$);
  
  \coordinate (N) at ($(im2)+1.5*(k)$);
  \coordinate (N0) at ($(codomain2)-0.8*(k)$);

  %lines
  \draw (N) -- (im2);
  \draw[image2] (im2) -- (U2);
  \draw[image] (U2) -- (D2); 
  \draw[C2] (D2) -- (codomain2);
  \draw[C] (codomain2) -- (N0);
  
  %bullets
  \fill (N) circle (\circlewidth);
  \fill (im2) circle (\circlewidth);
  \fill (U2) circle (\circlewidth);
  \fill (D2) circle (\circlewidth);
  \fill (codomain2) circle (\circlewidth);
  \fill (N0) circle (\circlewidth);
  
  %nodes
  \node at (N) [right] {$N$};
  \node at (im2) [right] {$\img \psi$};
  \node at (D2) [right] {$\ker (\jmath_\psi \widetilde{\jmath})$};
  \node at (codomain2) [right] {$\ker \jmath_\psi$};
  \node at (N0) [right] {$0_N$};
  
  %arrows
  \draw[map,-stealth'] ($(Z2_psi)+0.1*(j)$) -- ($(im2)-0.1*(j)$);
  \draw[map,-stealth'] ($(Z1_Z2+B2_psi)+0.1*(j)$) -- ($(U2)-0.1*(j)$);
  \draw[map,-stealth'] ($(B2+int_B2_psi)+0.1*(j)$) -- ($(D2)-0.1*(j)$);
  \draw[map,-stealth'] ($(B2_psi)+0.1*(j)$) -- ($(codomain2)-0.1*(j)$);
  
  \node at ($0.5*(B2_psi)+0.5*(D2)-0.25*(h)$) {\color{gray} \tiny $\overunderline{\psi}$};
  \node at ($0.5*(B2+int_B2_psi)+0.5*(U2)-0.1*(f)$) {\color{gray} \footnotesize $\overunderline{\psi}$};
  \node at ($0.5*(Z1_Z2+B2_psi)+0.5*(im2)-0.15*(d)$) {\color{gray} \footnotesize $\overunderline{\psi}$};
  
  \draw[decorate,decoration=brace] ($(codomain2)+(1.9,0)$) -- node[right] {$\cong N_\psi$} ($(N)+(1.5,0)$);
  
  %fill
  
  %%%%%%%%
  % M_\phi
  %%%%%%%%
  
  %coordinates
  \coordinate (M_phi) at ($(o)-(j)$);
  \coordinate (Z1+Z2_phi) at ($(M_phi)+(a)$);
  \coordinate (Z1+B2_phi) at ($(Z1+Z2_phi)+(d)$);
  \coordinate (Z1_phi) at ($(Z1+B2_phi)+(e)$);
  \coordinate (B1+Z2_phi) at ($(Z1+Z2_phi)+(b)$);
  \coordinate (bbb_phi) at ($(B1+Z2_phi)+(d)$);
  \coordinate (B1+B2_phi) at ($(bbb_phi)+(f)$);
  \coordinate (int_Z2_phi) at ($(bbb_phi)+(e)$);
  \coordinate (B1+int_B2_phi) at ($(int_Z2_phi)+(f)$);
  \coordinate (B1_phi) at ($(B1+int_B2_phi)+(g)$);
  \coordinate (Z2_phi) at ($(B1+Z2_phi)+(c)$);
  \coordinate (Z1_Z2+B2_phi) at ($(Z2_phi)+(d)$);
  \coordinate (Z1_Z2_phi) at ($(Z1_Z2+B2_phi)+(e)$);
  \coordinate (B2+int_B2_phi) at ($(Z1_Z2+B2_phi)+(f)$);
  \coordinate (int_B2_phi) at ($(Z1_Z2_phi)+(f)$);
  \coordinate (B2_phi) at ($(B2+int_B2_phi)+(h)$);
  \coordinate (B2_int_B2_phi) at ($(int_B2_phi)+(h)$);
  \coordinate (B1_Z2_phi) at ($(int_B2_phi)+(g)$);
  \coordinate (B1_B2_phi) at ($(B1_Z2_phi)+(h)$);
  \coordinate (0_phi) at ($(B1_B2_phi)+(i)$);
  
  %lines
  \draw[C] (M_phi) -- (Z1+Z2_phi);
  \draw[C1] (Z1+B2_phi) -- (bbb_phi);
  \draw[C1] (Z1+Z2_phi) -- (B1+Z2_phi);
  \draw[C1] (Z1_phi) -- (int_Z2_phi);
  
  \draw[image2] (Z1+Z2_phi) -- (Z1+B2_phi);
  \draw (Z1+B2_phi) -- (Z1_phi);
  \draw[image2] (B1+Z2_phi) -- (bbb_phi);
  \draw (bbb_phi) -- (int_Z2_phi);
  \draw[image] (bbb_phi) -- (B1+B2_phi);
  \draw (B1+B2_phi) -- (B1+int_B2_phi);
  \draw[image] (int_Z2_phi) -- (B1+int_B2_phi);
  \draw[image1] (B1+int_B2_phi) -- (B1_phi);
  
  %bullets
  \fill (M_phi) circle (\circlewidth);
  \fill (Z1+Z2_phi) circle (\circlewidth);
  \fill (Z1+B2_phi) circle (\circlewidth);
  \fill (Z1_phi) circle (\circlewidth);
  \fill (B1+Z2_phi) circle (\circlewidth);
  \fill (bbb_phi) circle (\circlewidth);
  \fill (B1+B2_phi) circle (\circlewidth);
  \fill (int_Z2_phi) circle (\circlewidth);
  \fill (B1+int_B2_phi) circle (\circlewidth);
  \fill (B1_phi) circle (\circlewidth);

  \node at (M_phi) [left] {$M_\phi$};
  \node at (B1+Z2_phi) [right] {$\img \alpha$};
  \node at (B1_phi) [above] {$0_{M_\phi}$};
  
  %%%%%%%%
  % L
  %%%%%%%%
  
  %coordinates
  \coordinate (domain1) at ($(Z1_phi)-0.6*(j)$);
  \coordinate (U1) at ($(int_Z2_phi)-0.6*(j)$);
  \coordinate (D1) at ($(B1+int_B2_phi)-0.6*(j)$);
  \coordinate (ker1) at ($(B1_phi)-0.6*(j)$);
  
  \coordinate (L) at ($(domain1)+0.7*(k)$);
  \coordinate (L0) at ($(ker1)-1.3*(k)$);

  %lines
  \draw[C] (L) -- (domain1);
  \draw[C1] (domain1) -- (U1);
  \draw[image] (U1) -- (D1);
  \draw[image1] (D1)-- (ker1);
  \draw (ker1) -- (L0);
  
  %bullets
  \fill (L) circle (\circlewidth);
  \fill (domain1) circle (\circlewidth);
  \fill (U1) circle (\circlewidth);
  \fill (D1) circle (\circlewidth);
  \fill (ker1) circle (\circlewidth);
  \fill (L0) circle (\circlewidth);

  %nodes
  \node at (L) [left] {$L$};
  \node at (domain1) [left] {$L^\phi \cong \img \imath_\phi$};
  \node at (U1) [left] {$L^{\phi \psi} \cong \img (\widetilde{\imath}\, \imath_\phi$)};
  \node at (ker1) [left] {$\ker \phi$};
  \node at (L0) [left] {$0_L$};
  
  %arrows
  \draw[map,-stealth'] ($(domain1)+0.1*(j)$) -- ($(Z1_phi)-0.1*(j)$);
  \draw[map,-stealth'] ($(U1)+0.1*(j)$) -- ($(int_Z2_phi)-0.1*(j)$);
  \draw[map,-stealth'] ($(D1)+0.1*(j)$) -- ($(B1+int_B2_phi)-0.1*(j)$);
  \draw[map,-stealth'] ($(ker1)+0.1*(j)$) -- ($(B1_phi)-0.1*(j)$);
  
  \node at ($0.5*(ker1)+0.5*(B1+int_B2_phi)-0.1*(g)$) {\color{gray} \footnotesize $\overunderline{\phi}$};
  \node at ($0.5*(D1)+0.5*(int_Z2_phi)-0.1*(f)$) {\color{gray} \footnotesize $\overunderline{\phi}$};
  \node at ($0.5*(U1)+0.5*(Z1_phi)-0.2*(b)$) {\color{gray} \tiny $\overunderline{\phi}$};
  
  \draw[map,-doublestealth] ($(M)-0.1*(j)$) -- ($(M_phi)+0.1*(j)$);
  %\draw[map,-stealth'] ($(Z1+Z2)-0.1*(j)$) -- ($(Z1+Z2_phi)+0.1*(j)$);
  %\draw[map,-stealth'] ($(B1+Z2)-0.1*(j)$) -- ($(B1+Z2_phi)+0.1*(j)$);
  %\draw[map,-stealth'] ($(Z1+B2)-0.1*(j)$) -- ($(Z1+B2_phi)+0.1*(j)$);
  %\draw[map,-stealth'] ($(bbb)-0.1*(j)$) -- ($(bbb_phi)+0.1*(j)$);
  %\draw[map,-stealth'] ($(Z1)-0.1*(j)$) -- ($(Z1_phi)+0.1*(j)$);
  %\draw[map,-stealth'] ($(int_Z2)-0.1*(j)$) -- ($(int_Z2_phi)+0.1*(j)$);
  %\draw[map,-stealth'] ($(B1+int_B2)-0.1*(j)$) -- ($(B1+int_B2_phi)+0.1*(j)$);
  \draw[map,-doublestealth] ($(B1)-0.1*(j)$) -- ($(B1_phi)+0.1*(j)$);
  
\end{tikzpicture}
\end{center}
}

The associativity of the composition is a tedious but elementary exercise.
This composition turns $\Gen(\A)$ into a category with the same class of objects as $\A$ and with the honest morphism
\[
  [1_M,1_M,1_M]
\]
(induced by $1_M$) as the identity of $M$.

The composition of two generalized morphisms $\phi$ and $\psi$ potentially ``shrinks'' the domain (compared to that of $\phi$) and codomain (compared to that of $\psi$). More precisely:
\begin{rmrk} \label{rmrk:domains_of_composition}
  The cokernel of the domain $\imath_{\phi \psi} = \widetilde{\imath}\, \imath_\phi$ of the composition is an extension of $\coker \imath_\phi$ by the subobject $\img \imath_\phi / \img(\widetilde{\imath}\, \imath_\phi)$, where the latter is naturally isomorphic to a subfactor of $\coker \imath_\psi$.
  Dually, the kernel of the codomain $\jmath_{\phi \psi} = \jmath_\psi \widetilde{\jmath}$ is an extension of $\ker \jmath_\psi$ by the factor object $\ker( \jmath_\psi \widetilde{\jmath}) / \ker \jmath_\psi$, where the latter is naturally isomorphic to a subfactor of $\ker \jmath_\phi$.
  These two claims can, for example, be easily read off from the above \nameft{Hasse} diagram of the composition.
\end{rmrk}

\subsection{The category \texorpdfstring{$\Gen(\A)$}{G(A)} is enriched over commutative inverse monoids}

Now, we introduce the natural (additively written) commutative inverse monoid structure on the $\Hom$-sets of $\Gen(\A)$ (cf.~Appendix~\ref{sec:isg}).
In contrast to the addition operation of generalized morphisms, which is somewhat technical since domains and codomains need to be unified, the additive inverse\footnote{in the sense of a commutative inverse semigroup} of a generalized morphism $\psi=[\imath_\psi,\overunderline{\psi},\jmath_\psi]$ is simply
\[
  -\psi := [\imath_\psi,-\overunderline{\psi},\jmath_\psi] \mbox{.}
\]

We now come to the definition of the addition.
For convenience we allow writing $\psi=[\imath_\psi,\overunderline{\psi},\jmath_\psi]$ where $\overunderline{\psi}:L \to N/K$ with $L \geq \img \imath_\psi$ and $K \leq \ker \jmath_\psi$.
Then we automatically replace the morphism $\overunderline{\psi}$ by its pre-composition with the natural mono $\img\imath_\psi\hookrightarrow L$ and its post-composition with the natural epi $N/K \twoheadrightarrow N/\ker \imath_\psi$.

\begin{defn}
  The \textbf{common restrictions} of two generalized morphisms $\beta=[\imath_\beta,\overunderline{\beta},\jmath_\beta]$ and $\gamma=[\imath_\gamma,\overunderline{\gamma},\jmath_\gamma]$ with the \emph{same source} is defined as the generalized morphisms
  \[
    \widetilde{\beta}:=[\kappa,\overunderline{\beta},\jmath_\beta]
    \mbox{ and }
    \widetilde{\gamma}:=[\kappa,\overunderline{\gamma},\jmath_\gamma] \mbox{,}
  \]
  where $\kappa$ is defined using the pullback below as $\kappa:=\alpha_\beta\imath_\beta=\alpha_\gamma\imath_\gamma$.

%\begin{wrapfigure}[5]{r}{6cm}
%  \vspace{-1em}
\begin{center}
  \begin{tikzpicture}[row sep=0.64cm,C/.style={color=red,line width=1pt}]
  % places
  \matrix (A) [matrix of nodes]
  {
  $\operatorname{pullback}(\imath_\beta,\imath_\gamma)$ &[1cm] $\source \imath_\gamma$ \\
  $\source \imath_\beta$ & $\source \gamma$ \\
  };
  % arrows
  \draw[-stealth'] (A-1-1) -- node[above] {$\alpha_\gamma$} (A-1-2);
  \draw[-stealth'] (A-1-2) -- node[right] {$\imath_\gamma$} (A-2-2);
  \draw[-stealth'] (A-1-1) -- node[left] {$\alpha_\beta$} (A-2-1);
  \draw[-stealth'] (A-2-1) -- node[below] {$\imath_\beta$} (A-2-2);
  \draw[-stealth'] (A-1-1) -- node[above right=-0.3em] {$\kappa$} (A-2-2);
  \end{tikzpicture}
%\end{center}
%\end{wrapfigure}
\quad\quad\quad\quad
%\begin{wrapfigure}[5]{r}{6cm}
%  \vspace{-1em}
%\begin{center}
  \begin{tikzpicture}[row sep=0.5cm,C/.style={color=red,line width=1pt}]
  % places
  \matrix (A) [matrix of nodes]
  {
  $\target \gamma$ &[1cm] $\target\jmath_\gamma$ \\
  $\target\jmath_\beta$ & $\operatorname{pushout}(\jmath_\beta,\jmath_\gamma)$ \\
  };
  % arrows
  \draw[-stealth'] (A-1-1) -- node[above] {$\jmath_\gamma$} (A-1-2);
  \draw[-stealth'] (A-1-2) -- node[right] {$\alpha_\gamma$} (A-2-2);
  \draw[-stealth'] (A-1-1) -- node[left] {$\jmath_\beta$} (A-2-1);
  \draw[-stealth'] (A-2-1) -- node[below] {$\alpha_\beta$} (A-2-2);
  \draw[-stealth'] (A-1-1) -- node[above right=-0.3em] {$\lambda$} (A-2-2);
  \end{tikzpicture}
\end{center}
%\end{wrapfigure}
  
  The \textbf{common coarsenings} of two generalized morphisms $\beta=[\imath_\beta,\overunderline{\beta},\jmath_\beta]$ and $\gamma=[\imath_\gamma,\overunderline{\gamma},\jmath_\gamma]$ with the \emph{same target} is defined as the generalized morphisms
  \[
    \undertilde{\beta}:=[\imath_\beta,\overunderline{\beta},\lambda]
    \mbox{ and }
    \undertilde{\gamma}:=[\imath_\gamma,\overunderline{\gamma},\lambda] \mbox{,}
  \]
  where $\lambda$ is defined using the pushout above as $\lambda:=\jmath_\beta\alpha_\beta=\jmath_\gamma\alpha_\gamma$.
  
  The \textbf{common adaptations} of two parallel generalized morphisms (i.e., with the same source \emph{and} the same target) $\beta=[\imath_\beta,\overunderline{\beta},\jmath_\beta]$ and $\gamma=[\imath_\gamma,\overunderline{\gamma},\jmath_\gamma]$ is their common restrictions followed by their common coarsenings
  \[
    \widetilde{\undertilde{\beta}} := [\kappa,\overunderline{\beta},\lambda]
    \mbox{ and }
    \widetilde{\undertilde{\gamma}} := [\kappa,\overunderline{\gamma},\lambda] \mbox{.}
  \]
\end{defn}

\begin{defn} \label{defn:addition_subtraction_generalized}
Let $\phi=[\imath_\phi,\overunderline{\phi},\jmath_\phi],\psi=[\imath_\psi,\overunderline{\psi},\jmath_\psi]:M\to N$ be two generalized morphisms.
The sum and difference $\phi\pm \psi$ are defined by $\phi\pm\psi:=[\kappa,\overunderline{\phi}\pm\overunderline{\psi},\lambda]$, where $\widetilde{\undertilde{\phi}}=[\kappa,\overunderline{\phi},\lambda]$ and $\widetilde{\undertilde{\psi}}=[\kappa,\overunderline{\psi},\lambda]$ are the common adaptations of $\phi$ and $\psi$.
\end{defn}

Obviously, $\Hom_{\Gen(\A)}(M,N)$ with this addition and the above additive inversion is a commutative inverse semigroup with
\[
  0(\psi) := \psi - \psi = - \psi + \psi = [\imath_\psi,0_{M^\psi N_\psi},\jmath_\psi]
\]
the idempotent associated to $\psi$.
The bounded meet-semilattice $E(\Hom_{\Gen(\A)}(M,N))$ of idempotents is modular with a terminal object and consists of all generalized morphisms of the form $0(\psi)$.
In fact, $\Hom_{\Gen(\A)}(M,N)$ is a commutative inverse monoid with the honest morphism
\[
  [1_M, 0_{MN}, 1_N]
\]
(induced by $0_{MN} \in \Hom_\A(M,N)$) as the additive zero.

The category $\Gen(\A)$ is enriched over commutative inverse monoids equipped with the nonclosed symmetric tensor product of Definition~\ref{defn:tensor_cim}.
Note that $\Gen(\A)$ is in general \emph{not} enriched over commutative monoids with their standard tensor product:
The additive zeros are in general not absorbing, i.e., not preserved by the composition as the domain of $\psi 0$ is that of $\psi$ and the codomain of $0 \phi$ is that of $\phi$.

Analogous to Remark~\ref{rmrk:domains_of_composition} we observe that:
\begin{rmrk} \label{rmrk:domains_of_addition}
  The cokernel of the domain of $\phi \pm \psi$ is an extension of $\coker \imath_\psi$ by a factor object of $\coker \imath_\phi$, and vice versa.
  Dually, the kernel of the codomain of $\phi \pm \psi$ is an extension of $\ker \jmath_\psi$ by a subobject of $\ker \jmath_\phi$, and vice versa.
\end{rmrk}

\subsection{The computability of \texorpdfstring{$\Gen(\A)$}{G(A)}}

\begin{theorem} \label{thm:GA}
  If $\A$ is a constructively \nameft{Abel}ian category then $\Gen(\A)$ is constructively a category enriched over commutative inverse monoids.
\end{theorem}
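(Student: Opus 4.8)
The plan is to unwind the meaning of ``constructively a category enriched over commutative inverse monoids'' into a finite checklist of data and axioms, and then to observe that each item has already been furnished by an explicit construction in the preceding subsections, where every construction is built \emph{solely} out of the operations that are assumed computable in a constructively \nameft{Abel}ian $\A$. Concretely, the objects of $\Gen(\A)$ are those of $\A$, and a morphism is a triple $(\imath_\psi,\overunderline{\psi},\jmath_\psi)$ together with a decidable equality on equivalence classes; so the first task is to certify that the defining data of a generalized morphism can be stored and recognized effectively. A triple is representable by three $\A$-morphisms, the predicates ``$\imath_\psi$ is mono'' and ``$\jmath_\psi$ is epi'' are decidable since $\A$ is computable (one tests whether the relevant kernel resp.~cokernel is zero, cf.~\ref{sec:ComputableAbelian}.\eqref{Ab_DecideZeroEffectively}), and the normalization conventions recorded after Definition~\ref{defn:addition_subtraction_generalized} let us always pass to a canonical representative with $L=\img\imath_\psi$ and $K=\ker\jmath_\psi$.

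Next I would verify the two pieces of categorical structure. For \emph{composition}, Definition~\ref{defn:composition_generalized} expresses $\phi\psi$ entirely through the images, coimages, the canonical iso $\widetilde\alpha$, a pullback of a cospan, and a pushout of a span; each of these is part of the constructive \nameft{Abel}ian structure of $\A$, so the map $(\phi,\psi)\mapsto\phi\psi$ is an algorithm. Associativity and the unit law for $[1_M,1_M,1_M]$ are already asserted in the text (``a tedious but elementary exercise'') and need no constructive content beyond decidable equality of the resulting triples. For the \emph{enrichment}, the inverse-monoid operations are likewise explicit: additive inversion is the syntactic sign flip $-\psi:=[\imath_\psi,-\overunderline{\psi},\jmath_\psi]$; the additive zero is the honest morphism $[1_M,0_{MN},1_N]$; and addition is computed by forming the common adaptations via the displayed pullback and pushout and then adding the associated $\A$-morphisms (Definitions of common restrictions/coarsenings and Definition~\ref{defn:addition_subtraction_generalized}). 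All of these invoke only kernels, cokernels, pullbacks, pushouts, and the additive structure of $\Hom_\A$, hence are algorithmic. That the axioms of a commutative inverse monoid hold on each $\Hom$-set, and that composition is bilinear with respect to the tensor product of Definition~\ref{defn:tensor_cim}, are equational identities between explicitly constructed morphisms, so they reduce to decidable equalities.

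The crux of the proof, and the one step deserving genuine care rather than bookkeeping, is \textbf{deciding equality of two generalized morphisms}, since a ``constructive category'' must come with an algorithm for the equality relation on each $\Hom$-set. Here I would reduce equality to the common-adaptation construction: given parallel $\phi,\psi:M\to N$, form their common adaptations $\widetilde{\undertilde\phi}=[\kappa,\overunderline\phi,\lambda]$ and $\widetilde{\undertilde\psi}=[\kappa,\overunderline\psi,\lambda]$, which by construction share the same domain $\kappa$ and codomain $\lambda$; then $\phi=\psi$ in $\Gen(\A)$ if and only if the two \emph{honestly parallel} associated $\A$-morphisms $\overunderline\phi,\overunderline\psi:\operatorname{source}\kappa\to\operatorname{target}\lambda$ are equal, equivalently $\overunderline\phi-\overunderline\psi=0$, which is decidable in $\A$ by \ref{sec:ComputableAbelian}.\eqref{Ab_DecideZeroEffectively}. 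The content to be checked is that this criterion really captures the equivalence relation of the Definition of generalized morphisms: one must argue that passing to the common adaptation neither identifies distinct classes nor separates equal ones, using that the unique lift $\mu=\liftofCalongB{\imath_\psi}{\imath_\phi}$ and colift $\nu=\coliftofCalongB{\jmath_\phi}{\jmath_\psi}$ witnessing equivalence are themselves computable (\ref{sec:ComputableAbelian}.\eqref{Ab_colift}). Once this equality test is in place, every remaining axiom is an identity between effectively produced triples and is therefore decidable, completing the verification that $\Gen(\A)$ is constructively a category enriched over commutative inverse monoids.
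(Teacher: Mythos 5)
Your overall strategy coincides with the paper's: both proofs walk through the checklist of data for a category enriched over commutative inverse monoids and observe that the identity, the composition of Definition~\ref{defn:composition_generalized}, membership of a triple, the addition and subtraction of Definition~\ref{defn:addition_subtraction_generalized}, and the zero morphism are all assembled from kernels, cokernels, pullbacks, pushouts, lifts, and the additive structure of $\Hom_\A$, hence computable. However, the one step you single out as the crux --- deciding equality --- is handled with a criterion that decides the \emph{wrong} relation. You claim $\phi=\psi$ in $\Gen(\A)$ iff their common adaptations have equal associated $\A$-morphisms. This relation is strictly coarser than the equivalence defining $\Gen(\A)$: take any non-invertible mono $\imath:M'\hookrightarrow M$ and any $f:M\to N$, and compare $\phi=[1_M,f,1_N]$ with $\psi=[\imath,\imath f,1_N]$. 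The pullback of $1_M$ and $\imath$ is $M'$, so both common restrictions equal $[\imath,\imath f,1_N]$ and the coarsening is trivial; your test declares $\phi=\psi$. But they are \emph{not} equal in $\Gen(\A)$, since an isomorphism $\mu:M\to M'$ with $\mu\,\imath=1_M$ would force $\imath$ to be invertible. Indeed, if your criterion were the equality of $\Gen(\A)$, all idempotents $0(\psi)=[\imath_\psi,0,\jmath_\psi]$ over a fixed pair of objects would coincide and the meet-semilattice $E(\Hom_{\Gen(\A)}(M,N))$ would be a singleton, contradicting the paper's explicit description of it. The coarser relation you describe is essentially the one that only enters later, when the zeroid is factored out to recover $\A/\C$; importing it into $\Gen(\A)$ destroys the inverse-monoid enrichment the theorem is asserting.

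The correct --- and still perfectly decidable --- test is the one the paper uses: given two representing triples with the same source and target, first decide whether the lift $\mu=\liftofCalongB{\imath_\psi}{\imath_\phi}$ exists (test whether the composite of $\imath_\phi$ with the cokernel epi of $\imath_\psi$ vanishes, then invoke \ref{sec:ComputableAbelian}.\eqref{Ab_MonoLift}) and whether it is an isomorphism (vanishing of its cokernel, as it is automatically mono); dually, decide the existence and invertibility of the colift $\nu=\coliftofCalongB{\jmath_\phi}{\jmath_\psi}$; and finally decide whether $\mu\,\overunderline{\psi}=\overunderline{\phi}\,\nu$. Each of these is a decidable question in a constructively \nameft{Abel}ian $\A$. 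With this repair the rest of your argument goes through and matches the paper's proof.
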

\begin{proof}
\mbox{} \\
$\Gen(\A)$ is a \textbf{category}:
\begin{enumerate}
  \item\label{Gen_IdentityMorphism}
    The identity morphism $[1_M,1_M,1_M]$ of $M$ in $\Gen(\A)$ is constructible since the identity morphism $1_M$ in $\A$ is constructible.
  \item\label{Gen_Compose}
    All operations involved in Definition~\ref{defn:composition_generalized} of the composition are computable in the constructive \nameft{Abel}ian category $\A$.
  \item\label{Gen_HomSetMembership}
    A triple $(\imath_\psi,\overunderline{\psi},\jmath_\psi)$ is a representative of a generalized morphism of $\A$ iff $\imath_\psi,\overunderline{\psi},\jmath_\psi$ are $\A$-morphisms, the sources of $\imath_\psi,\overunderline{\psi}$ coincide, the targets of $\overunderline{\psi},\jmath_\psi$ coincide, and $\imath_\psi$ is an $\A$-mono (which can be verified by the vanishing of its kernel) and $\jmath_\psi$ is an $\A$-epi (which can be verified by the vanishing of its cokernel).
  \item\label{Gen_HomSetEquality}
    The equality of two generalized morphisms represented by two triples is decidable, as the conditions describing the equivalence of two representing triples are decidable in the constructively \nameft{Abe}ian category $\A$.
\suspend{enumerate}
$\Gen(\A)$ is a category \textbf{enriched over commutative inverse monoids}:
\resume{enumerate}
  \item[(e,f)]\label{Gen_AddMat} \label{Gen_SubMat}
    All operations involved in Definition~\ref{defn:addition_subtraction_generalized} of the addition and subtraction are computable in the constructively \nameft{Abel}ian category $\A$.
  \item[(g)] \label{Gen_ZeroMat}
  The zero morphism $[1_M,0_{MN},1_N]$ from $M \to N$ in $\Gen(\A)$ is constructible since all three morphisms $1_M,0_{MN},1_N$ are constructible in $\A$.
    \qedhere
\end{enumerate}
\end{proof}

In \cite{BaSF} the enrichment over \emph{commutative} inverse monoids plays no role, whereas other properties which we omitted here become more relevant.

\subsection{The subcategory \texorpdfstring{$G_\C(\A)$}{G\_C(A)} of \nameft{Gabriel} morphisms}

From now on $\C$ denotes a thick subcategory of the \nameft{Abel}ian category $\A$, i.e., a non-empty full subcategory of $\A$ closed under passing to sub- and factor objects and forming extensions.

\begin{defn} \label{defn:Gabriel_morphism}
  A \textbf{\nameft{Gabriel} morphism (of $\A$ with respect to $\C$)} is a generalized morphism $[\imath_\psi,\overunderline{\psi},\jmath_\psi]:M \to N$ with $\coker \imath_\psi \in \C$ and $\ker \jmath_\psi \in \C$.
\end{defn}

Denote by $\Gen_\C(\A)$ the subclass of \nameft{Gabriel} morphisms w.r.t. $\C$ in $\Gen(\A)$.
Remark~\ref{rmrk:domains_of_composition} states that $\Gen_\C(\A)$ is a subcategory of $\Gen(\A)$ (which is wide, i.e., with the same class of objects), while Remark~\ref{rmrk:domains_of_addition} ensures that $\Gen_\C(\A)$ inherits the enrichment of $\Gen(\A)$ over the monoidal category of commutative inverse monoids.
Note that $\Gen(\A) = \Gen_\A(A)$.

For the notions used in following proposition and its proof we refer the reader to Appendix~\ref{sec:isg}.
\begin{prop} \label{prop:Zeroid_of_Gabriel}
  The zeroid of the commutative inverse monoid $\Hom_{\Gen_\C(\A)}(M,N)$ consists of all \nameft{Gabriel} morphism $\psi=[\imath_\psi,\overunderline{\psi},\jmath_\psi]:M \to N$ with $\img \overunderline{\phi} \in \C$.
\end{prop}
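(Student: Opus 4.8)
The plan is to prove both inclusions of the claimed equality, working throughout with the explicit description of the additive structure from Definition~\ref{defn:addition_subtraction_generalized} and with the fact (recalled in Appendix~\ref{sec:isg}) that the idempotents of $\Hom_{\Gen_\C(\A)}(M,N)$ are exactly the \nameft{Gabriel} morphisms of the form $0(\chi)=[\imath_\chi,0,\jmath_\chi]$, and that the zeroid consists of those $\psi$ that are \emph{absorbed} by some idempotent, i.e.\ $\psi+\epsilon=\epsilon$ for some idempotent $\epsilon$. I abbreviate $K_\psi:=\ker\jmath_\psi$ and silently identify $M^\psi$, $M^\epsilon$ with subobjects of $M$ via their monic domains; recall that $\coker\imath_\psi\in\C$ and $K_\psi\in\C$ since $\psi$ is a \nameft{Gabriel} morphism (Definition~\ref{defn:Gabriel_morphism}).

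For the inclusion ``$\supseteq$'' I would construct an absorbing idempotent explicitly. Assuming $\img\overunderline{\psi}\in\C$, let $\widetilde K\leq N$ be the preimage of $\img\overunderline{\psi}\subseteq N/K_\psi=N_\psi$ under the codomain epi $\jmath_\psi$, so that $\widetilde K/K_\psi\cong\img\overunderline{\psi}\in\C$ and hence $\widetilde K\in\C$ as an extension of $\img\overunderline{\psi}$ by $K_\psi\in\C$. Set $\epsilon:=[\imath_\psi,0,\jmath']$ with $\jmath'\colon N\twoheadrightarrow N/\widetilde K$; this is a legitimate idempotent of $\Gen_\C(\A)$ since $\coker\imath_\psi\in\C$ and $\ker\jmath'=\widetilde K\in\C$. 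As $\psi$ and $\epsilon$ share the domain $\imath_\psi$ and $K_\psi\subseteq\widetilde K$, their common adaptation has common domain $M^\psi$ and common codomain $N/\widetilde K$, and the associated morphism of $\psi$ coarsened to $N/\widetilde K$ is $\overunderline{\psi}$ post-composed with $N/K_\psi\twoheadrightarrow N/\widetilde K$, which kills $\img\overunderline{\psi}=\widetilde K/K_\psi$ and is therefore zero. Thus $\psi+\epsilon=[\imath_\psi,0,\jmath']=\epsilon$, so $\psi$ lies in the zeroid.

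For the inclusion ``$\subseteq$'' I would unwind the absorption relation. Suppose $\psi+\epsilon=\epsilon$ for an idempotent $\epsilon=[\imath_\epsilon,0,\jmath_\epsilon]$ with $\coker\imath_\epsilon\in\C$ and $K_\epsilon:=\ker\jmath_\epsilon\in\C$. Computing the left-hand side by common adaptations yields a \nameft{Gabriel} morphism with domain $M^\psi\cap M^\epsilon$, codomain $N/(K_\psi+K_\epsilon)$, and associated morphism induced by $\overunderline{\psi}$. Comparing with the representative $[\imath_\epsilon,0,\jmath_\epsilon]$ of $\epsilon$ through the equivalence of triples forces $M^\epsilon\subseteq M^\psi$ (the iso of domains over $M$), $K_\psi\subseteq K_\epsilon$ (the iso of codomains under $N$), and the vanishing of the restriction $\overunderline{\psi}|_{M^\epsilon}\colon M^\epsilon\to N/K_\epsilon$ (commutativity, matching the zero associated morphism of $\epsilon$). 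Hence $\overunderline{\psi}(M^\epsilon)\subseteq K_\epsilon/K_\psi$, which lies in $\C$ as a factor of $K_\epsilon\in\C$. It remains to upgrade this to $\img\overunderline{\psi}=\overunderline{\psi}(M^\psi)\in\C$: the quotient $\overunderline{\psi}(M^\psi)/\overunderline{\psi}(M^\epsilon)$ is an epimorphic image of $M^\psi/M^\epsilon$, which in turn is a subobject of $M/M^\epsilon=\coker\imath_\epsilon\in\C$; so both $\overunderline{\psi}(M^\epsilon)$ and $\overunderline{\psi}(M^\psi)/\overunderline{\psi}(M^\epsilon)$ lie in $\C$, and closure of $\C$ under extensions yields $\img\overunderline{\psi}\in\C$.

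The diagram bookkeeping is routine; the \textbf{main obstacle} is the ``$\subseteq$'' direction, specifically translating the equality of the two \nameft{Gabriel} morphisms into the three subobject/quotient relations above, and then assembling $\img\overunderline{\psi}\in\C$ from the short exact sequence $0\to\overunderline{\psi}(M^\epsilon)\to\img\overunderline{\psi}\to\overunderline{\psi}(M^\psi)/\overunderline{\psi}(M^\epsilon)\to0$ via a triple use of the thickness of $\C$ (under subobjects, factor objects, and extensions). I would verify at the outset that the additive zero and the common-adaptation recipe behave as stated, so that the computation of $\psi+\epsilon$ is performed on the honestly matched domains and codomains rather than on the original triples.
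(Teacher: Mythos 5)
Your proof is correct and follows essentially the same route as the paper: your idempotent $\epsilon=[\imath_\psi,0,\jmath']$ with $\ker\jmath'=\jmath_\psi^{-1}(\img\overunderline{\psi})$ is exactly the paper's $\phi:=[\imath_\psi,\overunderline{\psi}\pi_\psi,\jmath_\psi\pi_\psi]$ built from the cokernel epi $\pi_\psi$ of $\overunderline{\psi}$, and your converse direction recovers the paper's reduction to an idempotent $\phi\leq\psi$ (you extract the domination relations $M^\epsilon\subseteq M^\psi$, $K_\psi\subseteq K_\epsilon$ directly from the equivalence of triples rather than by adding $0(\psi)$, which amounts to the same thing). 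The only difference is that you spell out the final step --- $\img\overunderline{\psi}\in\C$ via the extension of $\overunderline{\psi}(M^\psi)/\overunderline{\psi}(M^\epsilon)$ by $\overunderline{\psi}(M^\epsilon)$ and the thickness of $\C$ --- which the paper compresses into ``it now follows.''
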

\begin{proof}
  Consider such a $\psi$ and denote by $\pi_\psi$ the cokernel epi of $\overunderline{\psi}$.
  Define $\phi := [\imath_\psi, \overunderline{\psi} \pi_\psi, \jmath_\psi \pi_\psi]$.
  Then $\ker \left(\jmath_\psi \pi_\psi\right) \in \C$ and $\phi = \phi + \psi$, i.e., $\psi$ belongs to the zeroid by Lemma~\ref{lemm:zeroid_characterization}.
  Conversely, let $\phi= \phi+\psi$.
  By replacing $\phi$ with $0(\phi)$ we may assume $\phi$ an idempotent.
  And by replacing the idempotent $\phi$ with $\phi+0(\psi)$ we may assume that $\phi \leq \psi$, i.e., that the (co)domain of $\psi$ (co)dominates\footnote{A mono $\kappa$ is said to dominate the mono $\lambda$ if there exists a (necessarily unique) lift $\liftofCalongB{\kappa}{\lambda}$ of $\lambda$ along $\kappa$, i.e., $(\liftofCalongB{\kappa}{\lambda}) \kappa = \lambda$. The notion of codomination is the dual one.} that of the \emph{idempotent} $\phi$.
  Since $\phi=\phi+\psi$ it now follows that $\img \overunderline{\psi} \in \C$.
\end{proof}

The bounded meet-semilattice $E(\Hom_{\Gen_\C(\A)}(M,N))$ of idempotents is a modular sublattice of $E(\Hom_{\Gen(\A)}(M,N))$, but in general without a terminal object.

\begin{defn}
  Define the \textbf{zeroid} $Z(\Gen_\C(\A))$ of the category $\Gen_\C(\A)$ as the disjoint union (over all $M,N \in \A$) of the zeroids of the commutative inverse monoids $\Hom_{\Gen_\C(\A)}(M,N)$.
\end{defn}

\begin{rmrk} \label{rmrk:GenCA_and_Zeroid}
  Let $\A$ be a constructively \nameft{Abel}ian category and $\C \subset \A $ a thick subcategory for which we can decide\footnote{By ``decide'' we always mean ``algorithmically decide''.} the membership problem.
  It is evident from Definition~\ref{defn:Gabriel_morphism} that the membership in the wide subcategory $\Gen_\C(\A) \subset \Gen(\A)$ becomes decidable as well.
  Hence, by Theorem~\ref{thm:GA}, $\Gen_\C(\A)$ is constructively a category enriched over commutative inverse monoids.
  
  Furthermore, the zeroid $Z(\Gen_\C(\A))$ is a two-sided ideal in $\Gen_\C(\A)$ (see Remark~\ref{rmrk:zeroid_semirings} for the corresponding statement in the context of semirings) for which membership is decidable by Proposition~\ref{prop:Zeroid_of_Gabriel}.
\end{rmrk}

\section{The computability of \nameft{Serre} quotient categories} \label{sec:computability_quotient}

In this section we explicitly verify that \nameft{Serre} quotient categories $\A/\C$ of an \nameft{Abel}ian category $\A$ modulo a thick subcategory $\C$ are constructively \nameft{Abel}ian once $\A$ is constructively \nameft{Abel}ian and the membership in $\C$ is decidable.
This proof uses the \nameft{Gabriel} morphisms from last section and underlies our computer implementation shown in Appendix~\ref{sec:ex}.

\subsection{Preliminaries} \label{subsec:Serre}

The \textbf{\nameft{Serre} quotient category} $\A/\C$ has the same object class as $\A$ and $\Hom$-groups defined by
\[
  \Hom_{\A/\C}(M,N) = \varinjlim_{\substack{M' \hookrightarrow M, N' \hookrightarrow N \\ M/M', N' \in \C}} \Hom_\A(M',N/N')\mbox{.}
\]
The \textbf{canonical functor} $\QQ:\A \to \A/\C$ is the identity on objects and maps a morphism $\phi \in \Hom_\A(M,N)$ to its image in $\Hom_{\A/\C}(M,N)$ under the maps
\[
  \Hom(M' \hookrightarrow M, N \twoheadrightarrow N/N'):\Hom_\A(M,N) \to \Hom_\A(M',N/N') \mbox{.}
\]
Again, $\A/\C$ is an \nameft{Abel}ian category and the canonical functor $\QQ: \A \to \A/\C$ is \emph{exact}.

Any morphism $M\xrightarrow{\psi}N$ in $\A/\C$ thus stems from a morphism in the category $\A$ with a subobject $M'$ of $M$ as source and a factor object $N/N'$ of $N$ as target.
The category $\Gen_\C(\A)$ now incorporates all morphisms in the defining direct system of the colimit \nameft{Abel}ian group $\Hom_{\A/\C}(M,N)$ into the single commutative inverse monoid $\Hom_{\Gen_\C(\A)}(M,N)$.
Applying Corollary~\ref{coro:zeroid_is_kernel_of_colimit} we can thus replace the colimit description of $\Hom_{\A/\C}(M,N)$ by factoring out the smallest group congruence in $\Hom_{\Gen_\C(\A)}(M,N)$.
More precisely:
\begin{coro} \label{coro:Serre_is_Gabriel_modulo_zeroid}
  The canonical homomorphism $\Hom_{\Gen_{\C}(\A)}(M,N) \to \Hom_{\A/\C}(M,N)$ of semigroups induces an isomorphism of \nameft{Abel}ian groups
  \[
    \Hom_{\A/\C}(M,N) \cong \Hom_{\Gen_{\C}(\A)}(M,N)/Z_{M,N} \mbox{.}
  \]
  where $Z_{M,N}$ is the zeroid of the commutative inverse monoid $\Hom_{\Gen_{\C}(\A)}(M,N)$.
  Furthermore, the \nameft{Serre} quotient $\A/\C$ is equivalent to the factor category $\Gen_\C(\A) / Z(\Gen_\C(\A))$.\footnote{
    Such factor categories appear under the name ``quotient category'' in \cite{RadicalOfACategory}.
    Another example: The homotopy category $\operatorname{K}^?(\A)$ of an \nameft{Abel}ian $\A$ is defined as the factor category of chain complexes $\operatorname{Ch}^?(\A)$ by the two-sided ideal of zero-homotopic chain morphisms. There the outcome is triangulated and in general not \nameft{Abel}ian.
  }
\end{coro}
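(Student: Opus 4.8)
The plan is to exhibit $\Hom_{\A/\C}(M,N)$ as the maximal group image of the commutative inverse monoid $\Hom_{\Gen_\C(\A)}(M,N)$ and then read off the statement from Corollary~\ref{coro:zeroid_is_kernel_of_colimit}. First I would identify the indexing of the defining colimit with representatives of \nameft{Gabriel} morphisms. A term $\Hom_\A(M',N/N')$ of the direct system is given by a subobject $\imath:M'\hookrightarrow M$ with $\coker\imath=M/M'\in\C$ and a factor $\jmath:N\twoheadrightarrow N/N'$ with $\ker\jmath=N'\in\C$; hence an element $\overunderline{\psi}\in\Hom_\A(M',N/N')$ packages into a triple $(\imath,\overunderline{\psi},\jmath)$ that is precisely a representative of a \nameft{Gabriel} morphism in the sense of Definition~\ref{defn:Gabriel_morphism}, and conversely every \nameft{Gabriel} morphism admits such a representative. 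Under this bijection the transition maps of the direct system---restriction to a smaller subobject of $M$ and corestriction to a larger factor object of $N$---are exactly the passage to common restrictions and coarsenings, so that the disjoint union $\bigsqcup_{(M',N')}\Hom_\A(M',N/N')$ modulo these transitions is $\Hom_{\Gen_\C(\A)}(M,N)$ and the canonical map $c:\Hom_{\Gen_\C(\A)}(M,N)\to\Hom_{\A/\C}(M,N)$, $\psi\mapsto[\overunderline{\psi}]$, is a well-defined surjection.

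Next I would check that $c$ is a homomorphism of commutative monoids. The only point to verify is additivity: the sum of Definition~\ref{defn:addition_subtraction_generalized} is formed by first passing to the common adaptation $[\kappa,\overunderline{\phi},\lambda]$, $[\kappa,\overunderline{\psi},\lambda]$ and then adding the associated $\A$-morphisms, whereas the sum in the colimit group is computed after transporting both classes to a common index $(M'',N'')$; since the transition maps of the direct system are group homomorphisms and the common adaptation realizes such a common index, the two results agree under $c$. In particular each idempotent $0(\psi)$ is sent to the neutral element, as its associated morphism is zero.

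With these compatibilities in hand, $c$ is a homomorphism from the commutative inverse monoid $\Hom_{\Gen_\C(\A)}(M,N)$ into the abelian group $\Hom_{\A/\C}(M,N)$, and the colimit defining the target is exactly the directed colimit along the idempotent order that computes the maximal group image. Corollary~\ref{coro:zeroid_is_kernel_of_colimit} then identifies the congruence induced by $c$ with the one induced by the zeroid, yielding the asserted isomorphism $\Hom_{\A/\C}(M,N)\cong\Hom_{\Gen_\C(\A)}(M,N)/Z_{M,N}$ with $Z_{M,N}$ as characterized in Proposition~\ref{prop:Zeroid_of_Gabriel}. I expect the main obstacle to lie precisely here: matching the two a~priori different descriptions of the addition---via pullbacks and pushouts in the common adaptation on one side, and via the filtered colimit over the direct system on the other---closely enough to see that $c$ is the universal group-valued homomorphism, so that Corollary~\ref{coro:zeroid_is_kernel_of_colimit} applies verbatim.

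Finally, to promote these $\Hom$-isomorphisms to the equivalence $\A/\C\simeq\Gen_\C(\A)/Z(\Gen_\C(\A))$, I note that both categories have the objects of $\A$, and that the factor category has $\Hom_{\Gen_\C(\A)}(M,N)/Z_{M,N}$ as its $\Hom$-groups because $Z(\Gen_\C(\A))$ is a two-sided ideal (Remark~\ref{rmrk:GenCA_and_Zeroid}). It therefore suffices to check that the family $c$ is natural in $M$ and $N$, i.e.\ that the composition of \nameft{Gabriel} morphisms from Definition~\ref{defn:composition_generalized} descends modulo the zeroid to the composition in $\A/\C$; equivalently, that the functor $\Gen_\C(\A)\to\A/\C$ sending $\psi$ to $c(\psi)$ respects composition and sends the zeroid to zero. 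Being bijective on $\Hom$ and the identity on objects, the induced functor $\Gen_\C(\A)/Z(\Gen_\C(\A))\to\A/\C$ is then an isomorphism, in particular an equivalence, of categories.
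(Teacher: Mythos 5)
Your proposal is correct and follows essentially the same route as the paper: identify the defining direct system of the colimit $\Hom_{\A/\C}(M,N)$ with the direct system $(H_e)_{e\in E(H)}$ of \nameft{Abel}ian groups inside the commutative inverse monoid $H=\Hom_{\Gen_\C(\A)}(M,N)$, and then apply Corollary~\ref{coro:zeroid_is_kernel_of_colimit}. The paper compresses this identification into the sentence preceding the corollary and gives no separate proof; your write-up merely supplies the details (matching the transition maps with common adaptations, and the functoriality needed for the categorical equivalence) that the paper leaves implicit.
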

%The composition of \nameft{Gabriel} morphisms induces the composition in $\A/\C$ since it is defined by the composition of $\overunderline{\phi}$ with $\overunderline{\psi}$ and an intermediate application of the ``inverse''\footnote{This ``inverse'' of $\alpha$ amounts to the same application of the isomorphism theorem as in the definition of the composition in \cite[III.1]{Gab_thesis}.} of $\alpha$ in Definition~\ref{defn:composition_generalized}; and the \nameft{Gabriel} morphism $[\imath_\psi,\alpha,\jmath_\psi]$ represents the identity of $M$ in $\A/\C$.

\subsection{\texorpdfstring{$\A/\C$}{A/C} is constructively \nameft{Abel}ian} \label{subs:proof}

\nameft{Gabriel} proves in \cite[Proposition~III.1.1]{Gab_thesis} that $\A/\C$ is an \nameft{Abel}ian category.
In his proof he assumes at the beginning of each construction (e.g., of kernels and cokernels) that the involved $\A/\C$-morphism $M \to N$ is expressible as an $\A$-morphism $M \to N$ (which is the same as an honest \nameft{Gabriel} morphism).
By this he allows adapting the models for source and target of such morphisms a priori, which is justifiable by the colimit.
This assumption strongly simplifies his proof as it completely hides the colimit process behind a ``without loss of generality'' statement.
Explicitly keeping track of the colimit process by advancing along the corresponding direct system renders \nameft{Gabriel} proof constructive.
This is precisely achieved by the \nameft{Gabriel} morphisms which naturally evolved as a data structure for morphisms in \nameft{Serre} quotient categories suitable for a computer implementation.

\begin{proof}[Proof of Theorem~\ref{thm:gabriel.computable}]

Let $L$, $M$, and $N$ be objects of $\A$ which we also consider as objects of $\A/\C$.
We go through all disjunctions and all existential quantifiers listed in Appendix~\ref{sec:ComputableAbelian} and show how to turn them into constructive ones using \nameft{Gabriel} morphisms in $\A$ with respect to $\C$ as a model for morphisms in $\A/\C$.

\noindent
$\A/\C$ is a category \textbf{enriched over commutative inverse monoids\footnote{Cf.~Definition~\ref{defn:tensor_cim}.}}:
\begin{enumerate}
  \item[(a-g)]\label{Serre_Enriched}
    Remark~\ref{rmrk:GenCA_and_Zeroid} states that $\Gen_\C(\A)$ is a constructively a category enriched over commutative inverse monoids and that the membership in its zeroid ideal $Z(\Gen_\C(\A))$ is decidable.
    All constructions now follow from Corollary~\ref{coro:Serre_is_Gabriel_modulo_zeroid} where two morphisms in $\A/\C$ are equal if and only if the difference of the representing Gabriel morphisms lies in $Z(\Gen_\C(\A))$.
  \setcounter{enumi}{\value{enumi}+7}
  \setcounter{saveenum}{\value{enumi}}
\suspend{enumerate}
$\A/\C$ is a category \textbf{with zero}:
\resume{enumerate}
  \item
    A zero object in $\A/\C$ can be modeled by a zero object in $\A \subset \Gen_\C(\A)$.
\suspend{enumerate}
$\A/\C$ is a \textbf{pre-additive} category:
\resume{enumerate}
  \item\label{Serre_AbelianGroup}
  The definition of the zeroid and Corollary~\ref{coro:Serre_is_Gabriel_modulo_zeroid} imply that $\Hom_{\A/\C}(M,N)$ with the above $(+,-,0_{MN})$ is an \nameft{Abel}ian group.
\suspend{enumerate}
$\A/\C$ is an \textbf{additive} category:
\resume{enumerate}
  \item\label{Serre_DiagMatProd}
    The $\A/\C$-direct sum $N \oplus L$ is modeled by the $\A$-direct sum $N\oplus L$ and the $\A/\C$-projections are modeled by the honest \nameft{Gabriel} morphisms induced by the $\A$-projections.
  \item\label{Serre_UnionOfColumns}
    Let $\phi=[\imath_\phi,\overunderline{\phi},\jmath_\phi]:M \to N,\, \psi=[\imath_\psi,\overunderline{\psi},\jmath_\psi]: M \to L$ be two \nameft{Gabriel} morphisms in $\A$ with respect to $\C$.
    The product morphism $\{\phi,\psi\}:M\to N\oplus L$ is modeled by passing to the common restrictions $\widetilde{\phi}=[\kappa,\overunderline{\phi},\jmath_\phi]$ and $\widetilde{\psi}=[\kappa,\overunderline{\psi},\jmath_\psi]$ of $\phi$ and $\psi$ and taking $\{\phi,\psi\}:=[\kappa,\{\overunderline{\phi},\overunderline{\psi} \},\jmath_\phi\oplus\jmath_\psi]$.
\suspend{enumerate}
$\A/\C$ is a \textbf{pre-\nameft{Abel}ian} category:
\resume{enumerate}
  \item\label{Serre_SyzygiesGenerators}
    Let $\phi=[\imath_\phi,\overunderline{\phi},\jmath_\phi]:M \to N$.
    The kernel $\ker \phi \stackrel{\kappa}{\hookrightarrow} M$ is modeled by the honest \nameft{Gabriel} morphism induced by the composition $\ker \overunderline{\phi} \hookrightarrow \source \imath_\phi\stackrel{\imath_\phi}{\hookrightarrow}M$.
  \item\label{Serre_DecideZeroEffectively} Computing the lift along the kernel mono we use Corollary~\ref{cor:lift_of_gabriel_morphisms} below.
  \item\label{Serre_UnionOfRows_IdentityMatrix}
  Let $\phi=[\imath_\phi,\overunderline{\phi},\jmath_\phi]:M \to N$.
    The cokernel $N\twoheadrightarrow \coker \phi$ is modeled by the honest \nameft{Gabriel} morphism induced by the composition $N\stackrel{\jmath_\phi}{\twoheadrightarrow}\target \jmath_\phi\twoheadrightarrow \coker \overunderline{\phi}$.
  \item\label{Serre_colift} Computing the colift is the dual of Lemma~\ref{lemm:lift_of_gabriel_morphisms} and its Corollary~\ref{cor:lift_of_gabriel_morphisms} below.
\suspend{enumerate}
$\A/\C$ is a \textbf{\nameft{Abel}ian} category:
\resume{enumerate}
  \item\label{Serre_MonoLift} Computing the lift along a mono is the statement of Corollary~\ref{cor:lift_of_gabriel_morphisms} below.
  \item\label{Serre_EpiColift} Computing the colift along an epi is again the dual statement.\qedhere
\end{enumerate}
\end{proof}

For the sake of an efficient implementation, we recommend giving direct constructions for the natural embeddings into a coproduct and for the co-pairing morphism.
\begin{enumerate}[\quad\,\,(a')]
\setcounter{enumi}{\value{saveenum}}
\item\label{Serre_DiagMatCoprod}
  The embeddings are induced by the $\A$-embeddings.
\item\label{Serre_UnionOfRows}
  Let $\phi=[\imath_\phi,\overunderline{\phi},\jmath_\phi]:N \to M,\, \psi=[\imath_\psi,\overunderline{\psi},\jmath_\psi]: L \to M$ be two \nameft{Gabriel} morphisms in $\A$ with respect to $\C$.
  The co-pairing morphism $\langle\phi,\psi\rangle:N\oplus L \to M$ is modeled by passing to the common coarsenings $\undertilde{\phi}=[\imath_\phi,\overunderline{\phi},\lambda]$ and $\undertilde{\psi}=[\imath_\psi,\overunderline{\psi},\lambda]$ of $\phi$ and $\psi$ and taking $\langle\phi,\psi\rangle:=[\imath_\phi\oplus\imath_\psi,\langle\overunderline{\phi},\overunderline{\psi} \rangle,\lambda]$.
\end{enumerate}

\smallskip

\begin{wrapfigure}[6]{r}{6.8cm}
\centering
\vskip -1.2em
\begin{tikzpicture}
  % places
    \node (M)                             {$M_\gamma=M_\beta$};
    \node (L)          [left=1.5 cm of M]   {$L^\gamma$};
    \node (K)          [right=1.5 cm of M]  {$K^\beta$};
    \node (gammaminus1)[below=of L]       {$\overline{L}$};
    \node (modkerbeta) [below=of K]       {$\underline{K}$};
  % arrows
    \draw[-stealth'] (L)                    -- node[above]         {$\overunderline{\gamma}$}               (M);
    \draw[-stealth'] (gammaminus1)          -- node[below right=0.2cm]   {$\overunderline{\gamma}$} (M);
    \draw[-stealth'] (K)                    -- node[above]         {$\overunderline{\beta}$}                (M);
    \draw[left hook-stealth'] (gammaminus1)           -- node[left]          {$\imath$}                 (L);
    \draw[-doublestealth] (K)                   -- node[right]         {$\jmath$} (modkerbeta);
    \draw[-stealth',dotted] (gammaminus1)   -- node[below]         {$\liftofCalongB{\underline{\beta}}{\overline{\gamma}}$}              (modkerbeta);
    \draw[left hook-stealth'] (modkerbeta)           -- node[below left=0.2cm]    {$\overunderline{\beta}$}                 (M);
\end{tikzpicture}
\end{wrapfigure}

\subsection{The Lifting Lemma}
We need some language to describe lifts in the context of \nameft{Gabriel} morphisms.
Let $\beta:=[\imath_\beta,\overunderline{\beta},\lambda]:K\to M$ and $\gamma:=[\imath_\gamma,\overunderline{\gamma},\lambda]:L\to M$ be two \nameft{Gabriel} morphisms; we may assume by a common coarsening that they have equal codomain\footnote{In particular, the targets of $\overunderline{\gamma}$ and $\overunderline{\beta}$ coincide: $M_\beta=M_\gamma=\target \lambda$.} $\lambda$.
We define the subobject $\overline{L} := \overunderline{\gamma}^{-1}(\img \overunderline{\beta}) \leq L^\gamma$ and call it the \textbf{$(\beta,\gamma)$-adapted source of $\gamma$}.
We denote its embedding by $\imath: \overline{L} \hookrightarrow L^\gamma$, and, by abuse of notation, the restriction of $\overunderline{\gamma}$ to $\overline{L}$ by $\overunderline{\gamma}:=\imath\,\overunderline{\gamma}: \overline{L} \to M_\gamma$.
Set $\underline{K} := K^\beta / \ker \overunderline{\beta}$ and call it the \textbf{$(\beta,\gamma)$-adapted source of $\beta$}.
We denote the natural epi by $\jmath:K^\beta\twoheadrightarrow \underline{K}$ and, again by abuse of notation, the induced morphism $\underline{K}\to M_\beta$ by $\overunderline{\beta}$.
By definition of $\overline{L}$ and $\underline{K}$ there exists a unique lift
\[
  \liftofCalongB{\underline{\beta}}{\overline{\gamma}}: \overline{L}\to \underline{K}
\]
of $\overunderline{\gamma}: \overline{L} \to M_\gamma$ along the mono $\overunderline{\beta}:\underline{K}\to M_\beta=M_\gamma$.
Its constructibility is guaranteed by Axiom \eqref{Ab_MonoLift} in Appendix~\ref{sec:ComputableAbelian} of the constructively \nameft{Abel}ian category $\A$.

\begin{lemm}[Lifting Lemma for \nameft{Gabriel} morphisms]\label{lemm:lift_of_gabriel_morphisms}
  Using the above notation let $\gamma:=[\imath_\gamma,\overunderline{\gamma},\lambda]:L\to M$ and $\beta:=[\imath_\beta,\overunderline{\beta},\lambda]:K\to M$ be two \nameft{Gabriel} morphisms with equal codomain $\lambda$.
  Furthermore, let $\ker\overunderline{\beta}$ and $(\img\overunderline{\beta} + \img\overunderline{\gamma}) / \img\overunderline{\beta}$ be in $\C$.
  Then $\liftofCalongB{\beta}{\gamma}:=\left[\imath\, \imath_\gamma, \liftofCalongB{\underline{\beta}}{\overline{\gamma}}, \jmath_\beta\,\jmath\right]$ is a \nameft{Gabriel} morphism $L \to K$ and the common adaptations of $(\liftofCalongB{\beta}{\gamma})\beta$ and $\gamma$ coincide.
\end{lemm}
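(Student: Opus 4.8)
The plan is to verify two things: first, that $\liftofCalongB{\beta}{\gamma}$ really is a \nameft{Gabriel} morphism $L\to K$, and second, that $(\liftofCalongB{\beta}{\gamma})\beta$ and $\gamma$ become the same generalized morphism after common adaptation. For the first point I would check the domain condition (a mono with $\coker\in\C$) and the codomain condition (an epi with $\ker\in\C$) separately, drawing the codomain estimate from the hypothesis $\ker\overunderline{\beta}\in\C$ and the domain estimate from the hypothesis $(\img\overunderline{\beta}+\img\overunderline{\gamma})/\img\overunderline{\beta}\in\C$; the associated morphism $\liftofCalongB{\underline{\beta}}{\overline{\gamma}}$ is already supplied by Axiom~\eqref{Ab_MonoLift}. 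For the second point I would run the composition of Definition~\ref{defn:composition_generalized} with middle object $K$ and observe that both the pullback and the pushout occurring there degenerate, leaving exactly a common restriction of $\gamma$.

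For the domain, the composite $\imath\,\imath_\gamma$ is a mono, and since $\ker\overunderline{\gamma}\le\overline{L}=\overunderline{\gamma}^{-1}(\img\overunderline{\beta})$ the map $\overunderline{\gamma}$ induces an isomorphism $L^\gamma/\overline{L}\xrightarrow{\ \sim\ }(\img\overunderline{\gamma}+\img\overunderline{\beta})/\img\overunderline{\beta}$, which lies in $\C$ by the second hypothesis. As $\coker\imath_\gamma\in\C$ because $\gamma$ is a \nameft{Gabriel} morphism, and $\coker(\imath\,\imath_\gamma)$ is an extension of $\coker\imath_\gamma$ by $L^\gamma/\overline{L}=\coker\imath$, thickness of $\C$ yields $\coker(\imath\,\imath_\gamma)\in\C$. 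Dually and more simply, the recorded codomain $\jmath_\beta\,\jmath$ is the natural epi from $K$ onto the factor $K/\ker\overunderline{\beta}$ (inside which $\underline{K}=K^\beta/\ker\overunderline{\beta}$ receives the associated morphism), so its kernel is $\ker\overunderline{\beta}$, which is in $\C$ by the first hypothesis. This settles that $\liftofCalongB{\beta}{\gamma}$ is a \nameft{Gabriel} morphism $L\to K$.

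For the lifting identity I would compute $(\liftofCalongB{\beta}{\gamma})\beta$ from Definition~\ref{defn:composition_generalized}. The connecting morphism $\alpha=\imath_\beta\,(\jmath_\beta\,\jmath)\colon K^\beta\to K/\ker\overunderline{\beta}$ is the canonical map $K^\beta\hookrightarrow K\twoheadrightarrow K/\ker\overunderline{\beta}$, whose kernel is $\ker\overunderline{\beta}$; hence $\coimg\alpha=\img\alpha=\underline{K}$ and $\widetilde{\alpha}$ is the identity on $\underline{K}$, while $\jmath\colon K^\beta\twoheadrightarrow\underline{K}$ is exactly the epi $M^\psi\twoheadrightarrow\coimg\alpha$ of the composition. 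Because $\liftofCalongB{\underline{\beta}}{\overline{\gamma}}$ already factors through $\img\alpha=\underline{K}$, the pullback collapses to $\overline{L}$ with $\widetilde{\imath}=1$ and $\widetilde{\phi}=\liftofCalongB{\underline{\beta}}{\overline{\gamma}}$; dually, because $\overunderline{\beta}$ factors through $\jmath$ via the induced mono $\overunderline{\beta}\colon\underline{K}\to M_\beta$, the pushout collapses to $M_\beta$ with $\widetilde{\jmath}=1$ and $\widetilde{\psi}=\overunderline{\beta}$. Thus $(\liftofCalongB{\beta}{\gamma})\beta=[\,\imath\,\imath_\gamma,\ (\liftofCalongB{\underline{\beta}}{\overline{\gamma}})\,\overunderline{\beta},\ \lambda\,]$, and the defining property of the lift gives $(\liftofCalongB{\underline{\beta}}{\overline{\gamma}})\,\overunderline{\beta}=\imath\,\overunderline{\gamma}$, the restriction of $\overunderline{\gamma}$ to $\overline{L}$. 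Therefore $(\liftofCalongB{\beta}{\gamma})\beta=[\,\imath\,\imath_\gamma,\ \imath\,\overunderline{\gamma},\ \lambda\,]$ is precisely the common restriction of $\gamma=[\imath_\gamma,\overunderline{\gamma},\lambda]$ to the subobject $\overline{L}\le L^\gamma$; since both morphisms already carry the codomain $\lambda$, the common coarsening is trivial and the two common adaptations coincide.

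The main obstacle I anticipate is the bookkeeping in this composition step: one must recognise that the pullback degenerates because $\liftofCalongB{\underline{\beta}}{\overline{\gamma}}$ lands in $\img\alpha$, that the pushout degenerates because $\overunderline{\beta}$ factors through the coimage epi $\jmath$, and one must reconcile the flexible codomain convention (recording the associated morphism with target $\underline{K}$ while the codomain epi has target $K/\ker\overunderline{\beta}$) with the actual $\A$-morphisms. Once these two degeneracies are identified, the defining equation of the underlying $\A$-lift from Axiom~\eqref{Ab_MonoLift} closes the argument without any further computation.
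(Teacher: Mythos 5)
Your proposal is correct and follows essentially the same route as the paper: the verification that $\liftofCalongB{\beta}{\gamma}$ is a \nameft{Gabriel} morphism (cokernel of $\imath\,\imath_\gamma$ as an extension of $\coker\imath_\gamma$ by $L^\gamma/\overline{L}\cong(\img\overunderline{\beta}+\img\overunderline{\gamma})/\img\overunderline{\beta}$, and kernel of the codomain equal to $\ker\overunderline{\beta}$) is word-for-word the paper's argument. For the identity of common adaptations the paper merely says the claim ``can be read off the \nameft{Hasse} diagram,'' whereas you explicitly unwind Definition~\ref{defn:composition_generalized} and check that the pullback and pushout degenerate — a welcome filling-in of detail, not a different method.
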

%\begin{wrapfigure}[9]{r}{8.4cm}
%\centering
%\vskip -0.9em
\begin{center}
\begin{tikzpicture}[C/.style={color=red,line width=1.5pt,dotted},
  image/.style={color=blue,line width=3},
  map/.style={color=gray,dashed}]
  \coordinate (a) at (0,-1.1);
  \coordinate (b) at ($(a)$);
  \coordinate (c) at (-0.375,-0.70);
  \coordinate (d) at (2,0);
  \coordinate (e) at (0.5,-0.8);
  \coordinate (f) at ($0.6*(a)$);
  \coordinate (g) at ($0.4*(a)$);
  \coordinate (h) at ($0.9*(d)$);
  \coordinate (i) at ($0.5*(a)$);
  \coordinate (j) at ($-0.65*(a)$);
  \coordinate (k) at ($0.55*(a)$);
  \coordinate (right) at (0.2,0);
  \coordinate (left) at ($-1*(right)$);
  
  \coordinate (0_L);
  \coordinate (ker_gamma) at ($(0_L)+0.6*(a)$);
  \coordinate (adapted_source) at ($(ker_gamma)+(b)$);
  \coordinate (im_i_gamma) at ($(adapted_source)+(c)$);
  \coordinate (L) at ($(im_i_gamma)+(i)$);
  \coordinate (ker_lambda) at ($(ker_gamma)+(d)$);
  \coordinate (0_M) at ($(ker_lambda)+(j)$);
  \coordinate (im_cap_im) at ($(ker_lambda)+(b)$);
  \coordinate (im_gamma) at ($(im_cap_im)+(c)$);
  \coordinate (im_beta) at ($(im_cap_im)+(e)$);
  \coordinate (im_plus_im) at ($(im_beta)+(c)$);
  \coordinate (M) at ($(im_plus_im)+(f)$);
  \coordinate (ker_beta) at ($(ker_lambda)+(h)$);
  \coordinate (0_K) at ($(ker_beta)-(g)$);
  \coordinate (K2) at ($(ker_beta)+(b)$);
  \coordinate (im_i_beta) at ($(K2)+(e)$);
  \coordinate (K) at ($(im_i_beta)+(k)$);
  
  %\filldraw[line width=4,color=white,fill=blue!30] \FillArea{(ker_gamma)}{(adapted_source)}{(im_cap_im)}{(ker_lambda)};
  %\filldraw[line width=4,color=white,fill=blue!30] \FillArea{(ker_lambda)}{(ker_beta)}{(K2)}{(im_cap_im)};
  
  \draw (0_L) -- (ker_gamma);
  \draw[image] (ker_gamma) -- (adapted_source);
  \draw[image] (ker_beta) -- (K2);
  \draw (K2) -- (im_i_beta);
  \draw[image] (ker_lambda) -- (im_cap_im);
  \draw (im_gamma) -- (im_plus_im) -- (M);
  \draw (im_cap_im) -- (im_beta);
  
  \draw[C] (adapted_source) -- (im_i_gamma);
  \draw[C] (0_K) -- (ker_beta);
  \draw[C] (im_cap_im) -- (im_gamma);
  \draw[C] (im_beta) -- (im_plus_im);
  \draw[C] (im_i_gamma) -- (L);
  \draw[C] (ker_lambda) -- (0_M);
  \draw[C] (im_i_beta) -- (K);
  
  \draw[map,-stealth'] ($(ker_gamma)+(right)$) -- ($(ker_lambda)+(left)$);
  \draw[map,-stealth'] ($(adapted_source)+(right)$) -- ($(im_cap_im)+(left)$);
  \draw[map,-stealth'] ($(im_i_gamma)+(right)$) -- ($(im_gamma)+(left)$);

  \node at ($0.5*(im_i_gamma)+0.5*(im_cap_im)+0.15*(c)$) {\color{gray} $\overunderline{\gamma}$};
  \node at ($0.5*(adapted_source)+0.5*(ker_lambda)+0.1*(b)$) {\color{gray} $\overunderline{\gamma}$};
  
  \draw[map,-stealth'] ($(ker_beta)+(left)$) -- ($(ker_lambda)+(right)$);
  \draw[map,-stealth'] ($(K2)+(left)$) -- ($(im_cap_im)+(right)$);
  \draw[map,-stealth'] ($(im_i_beta)+(left)$) -- ($(im_beta)+(right)$);
  
  \node at ($0.5*(im_i_beta)+0.5*(im_cap_im)+0.2*(e)$) {\color{gray} $\overunderline{\beta}$};
  \node at ($0.5*(K2)+0.5*(ker_lambda)+0.15*(b)$) {\color{gray} $\overunderline{\beta}$};
  
  \fill (0_L) circle (2pt);
  \fill (ker_gamma) circle (2pt) node[left=3pt] {$\ker\overunderline{\gamma}$};
  \fill (adapted_source) circle (2pt) node[left=3pt] {$\overline{L}$};
  \fill (im_i_gamma) circle (2pt) node[left] {$L^\gamma$};
  \fill (L) circle (2pt) node[left] {$L$};
  \fill (ker_lambda) circle (2pt) node[above left] {$\ker\lambda$};
  \fill (0_M) circle (2pt);
  \fill (im_cap_im) circle (2pt);
  \fill (im_gamma) circle (2pt) node[below left] {$\img\overunderline{\gamma}$};
  \fill (im_beta) circle (2pt) node[below right] {$\img\overunderline{\beta}$};
  \fill (im_plus_im) circle (2pt);
  \fill (M) circle (2pt) node[right=3pt] {$M$};
  \fill (ker_beta) circle (2pt) node[right=3pt] {$\ker\overunderline{\beta}=\ker\left(\jmath_\beta\,\jmath\right)$};
  \fill (0_K) circle (2pt);
  \fill (K2) circle (2pt);
  \fill (im_i_beta) circle (2pt) node[right=3pt] {$K^\beta$};
  \fill (K) circle (2pt) node[right=3pt] {$K$};
  
\end{tikzpicture}
\end{center}
%\end{wrapfigure}
%\mbox{}
\begin{proof}
The proof involves no further constructions.
The cokernel of $\imath\, \imath_\gamma$ lies in $\C$ as the extension of the two objects $L/L^\gamma \cong \coker \imath_\gamma$ and $L^\gamma/\overline{L} \cong (\img\overunderline{\beta} + \img\overunderline{\gamma}) / \img\overunderline{\beta}$, both lying in $\C$.
The kernel of $\jmath_\beta\,\jmath$ coincides with $\ker \overunderline{\beta}$ and is hence in $\C$.

The rest of the claim can be read off the \nameft{Hasse} diagram where we identify the objects $L^\gamma$ and $\overline{L}=\img\left(\imath\,\imath_\gamma\right)$ with their images in $L$, $K^\beta$ with its image in $K$, and $M_\beta=M_\gamma$ with $M/\ker\lambda$.
\end{proof}

\begin{coro} \label{cor:lift_of_gabriel_morphisms}
Let $[\beta]:K \to M$ be a mono  in $\A/\C$ and $[\gamma]: L \to M$ another morphism such that its composition with the cokernel epi of $[\beta]$ vanishes.
Without loss of generality we can assume that $[\beta]$ and $[\gamma]$ are represented by the \nameft{Gabriel} morphisms $\beta:=[\imath_\beta,\overunderline{\beta},\lambda]$ and $\gamma:=[\imath_\gamma,\overunderline{\gamma},\lambda]$ with equal codomain $\lambda$, respectively.
Then the unique lift of $[\gamma]$ along $[\beta]$ is represented by the \nameft{Gabriel} morphism $\liftofCalongB{\beta}{\gamma}:=\left[\imath\, \imath_\gamma, \liftofCalongB{\underline{\beta}}{\overline{\gamma}}, \jmath_\beta\,\jmath\right]: L \to K$.
\end{coro}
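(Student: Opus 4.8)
The plan is to read this corollary as a translation of the Lifting Lemma (Lemma~\ref{lemm:lift_of_gabriel_morphisms}) into the language of lifts along monos in $\A/\C$: once the two $\C$-membership hypotheses of that lemma are verified from the hypotheses stated here, its conclusion delivers both that $\liftofCalongB{\beta}{\gamma}$ is a \nameft{Gabriel} morphism $L\to K$ and that $(\liftofCalongB{\beta}{\gamma})\beta=\gamma$ in $\A/\C$. First I would justify the ``without loss of generality'' reduction: given any \nameft{Gabriel} representatives of $[\beta]$ and $[\gamma]$, both with target $M$, their common coarsenings produce representatives with one and the same codomain $\lambda$, so the standing notation of the Lifting Lemma applies. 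It then remains to derive the two hypotheses $\ker\overunderline{\beta}\in\C$ and $(\img\overunderline{\beta}+\img\overunderline{\gamma})/\img\overunderline{\beta}\in\C$.

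For the first, I would use that the kernel of $[\beta]$ in $\A/\C$ is modeled by the honest \nameft{Gabriel} morphism induced by $\ker\overunderline{\beta}\hookrightarrow K^\beta\stackrel{\imath_\beta}{\hookrightarrow}K$ (the kernel construction in the proof of Theorem~\ref{thm:gabriel.computable}). Since an object of $\A$ is a zero object of $\A/\C$ exactly when it lies in $\C$, the assumption that $[\beta]$ is a mono --- equivalently that this kernel vanishes in $\A/\C$ --- is precisely the statement $\ker\overunderline{\beta}\in\C$.

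For the second, I would compute the composite of $[\gamma]$ with the cokernel epi of $[\beta]$. The latter is modeled by the honest \nameft{Gabriel} morphism induced by the $\A$-epi $M\stackrel{\lambda}{\twoheadrightarrow}M_\beta\twoheadrightarrow M_\beta/\img\overunderline{\beta}$. Because this representative has full domain, composing it after $\gamma=[\imath_\gamma,\overunderline{\gamma},\lambda]$ yields a \nameft{Gabriel} morphism whose associated $\A$-morphism is $\overunderline{\gamma}$ followed by the projection $M_\beta\twoheadrightarrow M_\beta/\img\overunderline{\beta}$; its image is therefore $(\img\overunderline{\gamma}+\img\overunderline{\beta})/\img\overunderline{\beta}$. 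By Corollary~\ref{coro:Serre_is_Gabriel_modulo_zeroid} this composite is the zero morphism of $\A/\C$ iff its representing \nameft{Gabriel} morphism lies in the zeroid, which by Proposition~\ref{prop:Zeroid_of_Gabriel} happens iff the image of its associated morphism lies in $\C$. Thus the vanishing hypothesis is exactly $(\img\overunderline{\gamma}+\img\overunderline{\beta})/\img\overunderline{\beta}\in\C$.

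With both hypotheses in hand, the Lifting Lemma gives that $\liftofCalongB{\beta}{\gamma}$ is a \nameft{Gabriel} morphism $L\to K$ and that its composite with $\beta$ agrees with $\gamma$ in $\A/\C$, so $\liftofCalongB{\beta}{\gamma}$ is a lift of $[\gamma]$ along $[\beta]$. Uniqueness is then automatic: $\A/\C$ is \nameft{Abel}ian and $[\beta]$ is a mono, so any two lifts differ by a morphism annihilated by $[\beta]$ and hence coincide. The main obstacle I anticipate is the bookkeeping in the third step --- correctly identifying the image of the associated morphism of the composite of $[\gamma]$ with the cokernel epi, and confirming that the codomain-matching of \nameft{Gabriel} morphisms really identifies this image with $(\img\overunderline{\gamma}+\img\overunderline{\beta})/\img\overunderline{\beta}$, so that Proposition~\ref{prop:Zeroid_of_Gabriel} applies verbatim.
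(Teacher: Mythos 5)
Your proposal is correct and takes the same route as the paper: the paper's proof is a one-line assertion that the two hypotheses on $[\beta]$ and $[\gamma]$ are exactly the conditions $\ker\overunderline{\beta}\in\C$ and $(\img\overunderline{\beta}+\img\overunderline{\gamma})/\img\overunderline{\beta}\in\C$ of the Lifting Lemma, and you simply spell out that translation via the kernel/cokernel constructions from the proof of Theorem~\ref{thm:gabriel.computable} together with Proposition~\ref{prop:Zeroid_of_Gabriel} and Corollary~\ref{coro:Serre_is_Gabriel_modulo_zeroid}. The extra details you supply (the common-coarsening reduction, the identification of the image of the composite with the cokernel epi, and the uniqueness remark) are all consistent with the paper's intent.
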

\begin{proof}
The two conditions on $[\beta]$ and $[\gamma]$ are nothing but the two condition $\ker\overunderline{\beta} \in \C$ and $(\img\overunderline{\beta} + \img\overunderline{\gamma}) / \img\overunderline{\beta} \in \C$ in the Lifting Lemma~\ref{lemm:lift_of_gabriel_morphisms}.
\end{proof}

\begin{rmrk} \label{rmrk:C_as_obstruction_to_lift}
The figure in the proof of Lemma~\ref{lemm:lift_of_gabriel_morphisms} shows that lifts automatically introduce non-full domains as soon as $\ker \overunderline{\beta}$ is non-zero (in $\C$) and introduce non-full codomains as soon as $(\img\overunderline{\beta} + \img\overunderline{\gamma}) / \img\overunderline{\beta}$ is non-zero (in $\C$).
Summing up, objects in $\C$ often arise as obstructions to the existence of certain (co)lifts in $\A$.
Computing modulo $\C$, i.e., computing in $\A/\C$ these obstructions vanish and such (co)lifts exist.
\end{rmrk}

\begin{exmp}
  Let $\A$ be the category of finitely generated \nameft{Abel}ian groups (or $\Z$-modules) and $\C$ the subcategory of finitely generated torsion \nameft{Abel}ian groups.
  Consider the two $\A$-morphisms $\beta:\Z \stackrel{4}{\hookrightarrow} \Z$ and $\gamma:\Z \stackrel{6}{\to} \Z$.
  The lift $\liftofCalongB{\beta}{\gamma}$ does not exist in $\A$ as the image of $\gamma$ is not contained in the image of $\beta$.
  But viewing $\beta$ and $\gamma$ as morphisms in $\A/\C$ represented by honest \nameft{Gabriel} morphisms in $\A$ w.r.t.\ $\C$, the unique lift exists:
  \[
    \liftofCalongB{\beta}{\gamma} = \left( \Z \stackrel{2}{\hookrightarrow} \Z, \Z \xrightarrow{3} \Z, \Z \stackrel{1}{\hookrightarrow} \Z \right) \mbox{.}
  \]
\end{exmp}

Any good computer implementation should immediately profit when domains or codomains are full, since then we get closer to compute in $\A$ rather than in $\A/\C$.
In the language of \nameft{Hasse} diagrams this means that a lot of dotted lines, which stand for subfactors in $\C$, disappear.
In our application to coherent sheaves this is indeed very often the case.

\section{Computability of categories of coherent sheaves} \label{sec:sheaves}

In this section we apply the framework of \nameft{Gabriel} morphisms and show that categories of coherent sheaves on projective schemes and smooth toric varieties are constructively \nameft{Abel}ian.
Therefore, we consider the computability of categories of finitely presented graded modules.

\subsection{The computability of certain (graded) module categories}

In \cite{BL} we showed, that categories of finitely presented modules are constructively \nameft{Abel}ian if the corresponding ring is computable.
In this subsection we formulate the same result in the graded context and describe certain computable graded rings needed for the description of sheaves.
We call a (unitial) commutative ring $R$ \textbf{computable} if there exists an algorithm to solve a linear systems over $R$, i.e., to find an (affine) generating set of all $X$ with $B=XA$ for given matrices $A$ and $B$ over $R$.

\begin{theorem}[{\cite[Theorem~3.4, §3.3,4]{BL}}]\label{thm:modules.computable}
  If $R$ is a computable ring then the category of finitely presented $R$-modules is constructively \nameft{Abel}ian.
\end{theorem}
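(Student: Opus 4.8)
The plan is to reduce every constructive axiom of an Abelian category (Appendix~\ref{sec:ComputableAbelian}) to the single algorithmic primitive furnished by computability of $R$: for given matrices over $R$ one can compute an affine generating set of the solution set $\{X : XA = B\}$ of a one-sided linear system, that is, a particular solution together with a generating set of the homogeneous solutions (the syzygies of $A$). First I would fix data structures compatible with the postfix composition of the paper, working in the row convention so that $R$-module maps are right multiplication by matrices and each finitely presented module is the cokernel of its relation matrix. An object of $\Rfpres$ is a relation matrix $A$, standing for $M=\coker A$; a morphism $M \to N$ with $N = \coker B$ is a matrix $\phi$ on generators subject to the compatibility condition that $A\phi$ lie in the row span of $B$, and two such matrices represent the same morphism precisely when their difference lies in the row span of $B$. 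Both conditions---solvability of $A\phi = YB$ in $Y$ and of $\phi-\phi' = ZB$ in $Z$---are instances of the primitive, so well-definedness of morphisms, decidability of equality, and in particular the effective detection of the zero morphism (Axiom~\eqref{Ab_DecideZeroEffectively}) are immediate.

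Next I would dispatch the additive structure, which costs no solving: $\Hom$-sets are abelian groups under entrywise matrix addition with the zero matrix as neutral element, making $\Rfpres$ pre-additive, and block-diagonal relation matrices together with the evident projection and injection matrices realize biproducts, giving additivity. The pre-abelian axioms are where the solver does the real work, and they are dual. The cokernel of $\phi\colon \coker A \to \coker B$ is presented by stacking $\phi$ onto $B$, i.e.\ $\coker\!\begin{pmatrix} B\\ \phi\end{pmatrix}$, with the epimorphism induced by the identity on generators, so cokernels are free. The kernel is the heart of the matter: a generator $x$ of $M$ maps to zero in $N$ exactly when $x\phi = wB$ for some $w$, hence the kernel is generated by the $x$-components of a generating set of the homogeneous solutions of $(x\;\, w)\begin{pmatrix}\phi\\ -B\end{pmatrix}=0$, reduced modulo the rows of $A$. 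Thus kernels are exactly the syzygy output of the primitive, and the monomorphism realizing $\ker\phi \hookrightarrow M$ is read off directly.

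The remaining genuinely Abelian content is the lift along a mono (Axiom~\eqref{Ab_MonoLift}), the colift along an epi (Axiom~\eqref{Ab_colift}), and the identification of coimage with image. I would compute the lift $\liftofCalongB{\kappa}{\lambda}$ of a morphism $\lambda$ along a mono $\kappa$ by solving, on the level of generators and modulo the relations of the common target, the inhomogeneous system expressing that $\lambda$ factors through $\kappa$; the mono hypothesis guarantees that the solver returns a particular solution, and the colift is dual. Finally, since $\coimg\phi = \coker(\ker\phi \hookrightarrow M)$ and $\img\phi = \ker(N \twoheadrightarrow \coker\phi)$ are produced by the cokernel and kernel constructions above, the canonical comparison $\coimg\phi \to \img\phi$ is represented, after these reductions, by an explicit matrix; one then checks by two further applications of the solver that it admits a two-sided inverse, establishing the Abelian axiom.

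The main obstacle is not any single construction---each is a transcription of a linear system---but the coherence bookkeeping that keeps presentations mutually compatible: kernels and cokernels replace an object's presentation, and one must show that the lift, the colift, and the coimage--image comparison computed against these freshly produced presentations are independent of the particular generating set of solutions returned by the solver, and in particular that the formal inverse matrix for $\coimg\phi \to \img\phi$ really represents an isomorphism of the presented modules rather than merely an inverse on generators. Once this independence-of-choices is verified---which again reduces to solvability of systems $XA=B$---every disjunction and existential quantifier in the definition of a constructively Abelian category is witnessed by an explicit output of the primitive, proving the theorem.
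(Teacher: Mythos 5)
Your proposal is correct and follows essentially the same route as the paper's source for this result: the paper only cites \cite[Theorem~3.4]{BL}, and the proof there is exactly the matrix calculus you describe --- objects as cokernels of relation matrices, morphisms as matrices decided and compared via the solver for $XA=B$, kernels via syzygies, cokernels via stacking, and (co)lifts via inhomogeneous systems (the axiom labels in Appendix~\ref{sec:ComputableAbelian} even name these matrix operations). The only point stated tersely is that presenting the kernel object also requires computing the relations among its chosen generators, which is one further syzygy computation of the same kind.
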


One can analogously define the notion of a computable graded ring.
In the following let $D$ be a finitely generated \nameft{Abel}ian group.
We call a $D$-graded commutative ring $\grS$ \textbf{computable (as a $D$-graded ring)} if there exists an algorithm to find \emph{homogeneous} generating sets of affine spaces of solutions of linear systems $B=XA$ over $\grS$.
Similar to Theorem~\ref{thm:modules.computable} one shows the following corollary.

\begin{coro} \label{coro:graded.computability}
  If $\grS$ is a computable $D$-graded ring then the category $\A=\Sfpgrmod$ of finitely presented graded $\grS$-modules is constructively \nameft{Abel}ian.
\end{coro}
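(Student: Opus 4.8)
The plan is to transfer the proof of Theorem~\ref{thm:modules.computable} to the graded setting essentially verbatim, replacing each linear-algebra subroutine over $\grS$ by its homogeneous counterpart and carrying along the degree bookkeeping. First I would fix the data structures: a finitely presented graded $\grS$-module is represented by a homogeneous presentation matrix $A$ together with the degrees of the generators of its source and range, and a morphism of graded modules is represented by a matrix that is homogeneous of degree $0$ relative to these shifts. Since the ability to compute a finite homogeneous generating set of the solutions of $XA = 0$ is precisely graded coherence of $\grS$, kernels of maps between finitely presented graded modules are again finitely presented, so $\A = \Sfpgrmod$ is indeed an \nameft{Abel}ian category; it then remains to make every structural operation algorithmic.

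Next I would recall, following \cite{BL}, that all the axioms of a constructively \nameft{Abel}ian category collected in Appendix~\ref{sec:ComputableAbelian} reduce to two primitives over the coefficient ring: first, computing a generating set for the solutions of the homogeneous system $XA = 0$, i.e.\ the syzygies of $A$, which produces kernels (and hence, together with cokernels, the whole pre-\nameft{Abel}ian structure, cokernels being read off directly from stacked presentation matrices); and second, deciding solvability of an inhomogeneous system $B = XA$ and returning a particular solution when one exists, which produces the decision of equality and vanishing of morphisms as well as the lifts and colifts witnessing the \nameft{Abel}ian axioms. These two primitives are exactly the two halves of the defining property that $\grS$ be a computable $D$-graded ring, the only difference from the ungraded case being that the affine generating set of solutions is now required to be \emph{homogeneous}.

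Finally I would check the single point at which the grading genuinely intervenes, which is also where I expect the only real---though routine---work to lie: one must verify that homogeneity propagates through every construction, so that the computed kernels, images and cokernels inherit a well-defined grading and every computed lift or colift is again a degree-$0$ morphism. This is immediate once one notes that the systems $XA = 0$ and $B = XA$ arising from homogeneous $A$ and $B$ are themselves homogeneous, whence a homogeneous generating set of their solution space assembles into a graded submodule, respectively into a degree-preserving matrix, and no inhomogeneous solution can intrude. Because computability of $\grS$ as a $D$-graded ring furnishes exactly such homogeneous generating sets, each subroutine invoked in the proof of Theorem~\ref{thm:modules.computable} admits a homogeneous realization, and the verification of the conditions in Definition~\ref{defn:computable} carries over unchanged.
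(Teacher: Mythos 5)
Your proposal is correct and follows essentially the same route as the paper, which itself only states that the corollary is shown ``similar to Theorem~\ref{thm:modules.computable}'', i.e.\ by rerunning the ungraded argument of \cite{BL} with homogeneous generating sets of solution spaces of $B=XA$ in place of arbitrary ones. Your elaboration of the degree bookkeeping and the reduction to the two linear-algebra primitives is exactly the intended content of that one-line proof.
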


Now, we describe two classes of computable rings.

The first class are multigraded polynomial rings over a field, which we encounter in the toric setting.
Let $\grS = k[x_1,\ldots,x_n]$ be graded over a finitely generated \nameft{Abel}ian group $D$ satisfying
\begin{align} \label{eq:gr}
  &\mbox{$k$ is a computable field} \tag{*} \\
  &\mbox{and $\deg(a) = 0 \in D$ for all $a \in k^*$.} \nonumber
\end{align}

The $D$-graded ring $\grS = k[x_1,\ldots,x_n]$ satisfying \eqref{eq:gr} is computable by standard \nameft{Gröbner} bases methods (e.g.~\cite[§3.5,3.7]{AL}), which automatically respect the additional $D$-grading.

\begin{coro} \label{coro:Sfpgrmod_computable}
  The category of finitely presented graded modules over the graded ring $\grS=k[x_1,\ldots,x_n]$ satisfying \eqref{eq:gr} is constructively \nameft{Abel}ian.
\end{coro}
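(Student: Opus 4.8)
The plan is to reduce the statement to Corollary~\ref{coro:graded.computability} by checking that $\grS = k[x_1,\ldots,x_n]$, equipped with the $D$-grading satisfying \eqref{eq:gr}, is a \emph{computable $D$-graded ring} in the precise sense defined above. Concretely, I must exhibit an algorithm that, for given homogeneous matrices $A$ and $B$ over $\grS$, returns a \emph{homogeneous} generating set of the affine space $\{X : B = XA\}$ of solutions. Once this is in place the corollary is immediate: Corollary~\ref{coro:graded.computability} applied to $\grS$ yields that $\A = \Sfpgrmod$ is constructively \nameft{Abel}ian.

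First I would recall that $k[x_1,\ldots,x_n]$ is already computable as an \emph{ungraded} ring over the computable field $k$: by the standard \nameft{Gr\"obner} machinery (e.g.~\cite[§3.5,3.7]{AL}), the system $B = XA$ is solved by producing a particular solution $X_0$ together with a finite generating set of the syzygy module $\{Y : YA = 0\}$, so that the full solution space is $X_0 + \langle Y_1,\ldots,Y_r\rangle$. The content of the corollary is therefore not solvability as such, but \emph{gradedness}: that $X_0$ and the $Y_i$ can be chosen homogeneous with respect to $D$.

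The key observation is that condition \eqref{eq:gr} makes the $D$-grading compatible with the monomial structure. Since $\deg(a) = 0$ for every $a \in k^*$, each monomial $x^\alpha$ acquires a well-defined degree $\sum_i \alpha_i \deg(x_i) \in D$, obtained by post-composing the total multidegree map with the group homomorphism $\Z^n \to D$, $e_i \mapsto \deg(x_i)$. Consequently every operation in Buchberger's algorithm and in the syzygy computation — forming $S$-polynomials and reducing — carries homogeneous inputs to homogeneous outputs: a \nameft{Gr\"obner} basis of a homogeneous submodule consists of homogeneous elements, and the computed particular solution together with the syzygy generators of a homogeneous system $B = XA$ are themselves homogeneous. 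This is exactly the assertion quoted just before the corollary, that the \nameft{Gr\"obner} machinery ``automatically respects the additional $D$-grading.''

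The one point deserving care — the main obstacle, such as it is — is that $D$ is an arbitrary finitely generated \nameft{Abel}ian group, possibly with torsion and carrying no positivity, so the $D$-grading need not refine any monomial order and one cannot in general run a ``$D$-homogeneous'' \nameft{Gr\"obner} computation directly. The resolution is to compute with respect to the finer $\Z^n$-grading by total multidegree, which \emph{does} refine a term order: one runs the ordinary $\Z^n$-homogeneous \nameft{Gr\"obner} computation and then reads off $D$-degrees through the homomorphism $\Z^n \to D$ above. Since homogeneity for the finer $\Z^n$-grading implies homogeneity for the coarser $D$-grading, the resulting generators $X_0, Y_1,\ldots,Y_r$ are automatically $D$-homogeneous. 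This establishes that $\grS$ is a computable $D$-graded ring, and Corollary~\ref{coro:graded.computability} then gives that $\Sfpgrmod$ is constructively \nameft{Abel}ian.
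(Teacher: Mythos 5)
Your overall route is the same as the paper's: reduce to Corollary~\ref{coro:graded.computability} by showing that $\grS=k[x_1,\ldots,x_n]$ with the grading \eqref{eq:gr} is computable \emph{as a $D$-graded ring}, i.e., that the \nameft{Gr\"obner} machinery returns $D$-homogeneous particular solutions and syzygy generators. Your second paragraph already contains the correct and complete justification: since $\deg(a)=0$ for all $a\in k^*$, every monomial has a well-defined $D$-degree, and every elementary step of Buchberger's algorithm and of the division/syzygy computations (forming $S$-vectors, reducing a homogeneous element by homogeneous elements) sends $D$-homogeneous data to $D$-homogeneous data. This is exactly what the paper asserts when it says the standard methods ``automatically respect the additional $D$-grading.''

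Your final paragraph, however, introduces a step that would fail. A $D$-homogeneous matrix is in general \emph{not} $\Z^n$-homogeneous: already for the standard $\Z$-grading with $\deg x_i=1$, a homogeneous polynomial of degree $d$ mixes monomials of many different multidegrees, and the same happens for any coarsening $\Z^n\to D$. So one cannot ``run the ordinary $\Z^n$-homogeneous \nameft{Gr\"obner} computation'' on the given input $B=XA$; the finer homogeneity is simply not available there. Moreover, the obstacle you set out to circumvent is not actually present: preservation of homogeneity by Buchberger's algorithm does not require the grading to refine the term order (that compatibility is only needed for degree-by-degree or truncated computations). The $S$-vector of two $D$-homogeneous elements, and each reduction step applied to a $D$-homogeneous element, are again $D$-homogeneous for an arbitrary finitely generated \nameft{Abel}ian group $D$ with $\deg(k^*)=0$, whatever term order is used. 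If you prefer not to argue about the internals of the algorithm, the clean alternative is different from yours: compute an ungraded particular solution $X_0$ and ungraded syzygy generators, then replace $X_0$ by its $D$-homogeneous component of the degree prescribed by $A$ and $B$, and each syzygy generator by the collection of all its $D$-homogeneous components; this works because the equations $B=XA$ and $YA=0$ split along the $D$-grading. With the last paragraph deleted or repaired in either of these ways, your proof coincides with the paper's.
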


The second class of rings describes projective varieties over an affine scheme $\Spec B$.

\begin{defn}[{\cite[\S 4.3]{AL}}]\label{defn:coset.representatives}
  A ring $B$ is said to have \textbf{effective coset representatives} if for every ideal $I$ we can determine a set $T$ of coset representatives of $B/I$, such that for every $b \in B$ we can compute a unique $t \in T$ with $b+I=t +I$.
\end{defn}

Many rings have this property, e.g., fields and $\Z$.
Furthermore, if $B$ has effective coset representatives then its residue class rings and polynomial rings as well.
In the following we consider the $\Z$-graded polynomial ring $\grS=B[x_0,\ldots,x_n]$ satisfying
\begin{align} \label{eq:ecr}
  &\mbox{$B$ is a computable ring with effective coset representatives} \tag{**} \\
  &\mbox{and $\deg(x_0)=\ldots=\deg(x_n)=1$ and $\deg(b)=0$ for all $b\in B \setminus \{0\}$.} \nonumber
\end{align}

\begin{prop} \label{prop:projective.ring.computable}
  The $\Z$-graded ring $\grS=B[x_0,\ldots,x_n]$ satisfying \eqref{eq:ecr} is computable as a graded ring and has effective coset representatives.
\end{prop}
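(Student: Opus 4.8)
The plan is to derive both assertions from the constructive theory of \nameft{Gröbner} bases over the coefficient ring $B$, as developed in \cite[\S 4.3]{AL}, applied to the standard-graded polynomial ring $\grS=B[x_0,\ldots,x_n]$. The only feature distinguishing this situation from the field case of Corollary~\ref{coro:Sfpgrmod_computable} is that $B$ need not be a field, so that leading coefficients live in a genuine ring; the two hypotheses in \eqref{eq:ecr}, that $B$ is computable and has effective coset representatives, are exactly what is needed to run a \nameft{Buchberger}-type algorithm in this generality. This is the main obstacle, and it is resolved by \cite[\S 4.3]{AL}: the effective coset representatives of $B$ furnish a canonical remainder when reducing a leading coefficient modulo the $B$-ideal generated by the leading coefficients of matching elements of a candidate basis, which turns the normal form $\mathrm{NF}_G$ into a deterministic, terminating reduction with unique output, while the solvability of linear systems over $B$ (``$B$ computable'') is what produces the coefficients cancelling the leading terms of $S$-polynomials together with the accompanying \nameft{Gröbner} representations and syzygy coefficients.

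Before invoking this machinery I would fix a monomial order on the monomials in $x_0,\ldots,x_n$ that refines the $\Z$-degree. Since every nonzero element of $B$ has degree $0$ and each $x_i$ has degree $1$, such an order is graded: whenever the input data are homogeneous, every $S$-polynomial and every reduction step is again homogeneous, so the \nameft{Gröbner} basis produced from homogeneous generators is homogeneous and all the coefficient matrices computed along the way are homogeneous as well. This is precisely what is needed to respect the grading throughout.

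For the computability of $\grS$ as a graded ring, let $A$ and $C$ be homogeneous matrices over $\grS$ with the same number of columns. The affine solution set $\{X:C=XA\}$ is either empty or of the form $X_0+\{X:XA=0\}$. From a \nameft{Gröbner} basis of the row module of $A$ one reduces each row of $C$: a complete reduction to zero produces, via its \nameft{Gröbner} representation, a particular solution $X_0$, whereas a nonzero normal form certifies that no solution exists; and the relations produced by the \nameft{Buchberger}-type algorithm (from $S$-polynomials and from the leading-coefficient relations over $B$) yield a generating set of the module of left syzygies $\{X:XA=0\}$. By the previous paragraph all these data are homogeneous, so $\grS$ is computable as a $\Z$-graded ring in the sense required by Corollary~\ref{coro:graded.computability}.

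For the effective coset representatives of $\grS$, given an ideal $I\subseteq\grS$ I would compute a \nameft{Gröbner} basis $G$ of $I$ and take $T$ to be the set of polynomials in normal form with respect to $G$, their coefficients being the chosen effective coset representatives of $B$ against the relevant leading-coefficient ideals. Then $f\mapsto\mathrm{NF}_G(f)$ sends every $f\in\grS$ to the unique element of $T$ with $f+I=\mathrm{NF}_G(f)+I$, which is exactly the requirement of Definition~\ref{defn:coset.representatives}. This simultaneously re-derives, in the present special case, the assertion that polynomial rings inherit effective coset representatives from $B$, completing the proof.
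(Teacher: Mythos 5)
Your proposal is correct and follows essentially the same route as the paper: both rest on the \nameft{Gröbner} basis machinery over a coefficient ring with effective coset representatives from \cite[\S 4.3]{AL} (unique reduction, homogeneity of the reduced elements and of the syzygies produced by the \nameft{Buchberger}-type algorithm). You simply spell out in more detail what the paper compresses into citations of \cite[Theorem~4.3.3]{AL} and \cite[Theorem~4.3.15]{AL}.
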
  
\begin{proof}
  There are well-known \nameft{Gröbner} bases techniques for $\grS$.
  Theorem~4.3.3 in \cite{AL} shows that reduction yields unique elements and its proof shows that the coefficients of the representation of reduced elements are homogeneous.
  This also implies that the reduced elements are themselves homogeneous.
  Finally, the syzygies construction above \cite[Theorem~4.3.15]{AL} produces homogeneous syzygies.
\end{proof}

\begin{coro} \label{coro:SfpZgrmod_computable}
  The category of finitely presented graded modules over the graded ring $\grS=B[x_0,\ldots,x_n]$ satisfying \eqref{eq:ecr} is constructively \nameft{Abel}ian.
\end{coro}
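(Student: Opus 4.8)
The plan is to obtain this corollary as an immediate consequence of the two preceding results, so that the genuine content has already been carried out upstream. First I would invoke Proposition~\ref{prop:projective.ring.computable}, which asserts that the $\Z$-graded polynomial ring $\grS = B[x_0,\ldots,x_n]$ satisfying \eqref{eq:ecr} is computable as a graded ring. This is the step where all the substance lives, since it rests on the homogeneous \nameft{Gröbner} bases machinery for $\grS$ over a ring with effective coset representatives; once that proposition is in hand, nothing further needs to be proved about the ring itself.

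Next I would specialize Corollary~\ref{coro:graded.computability} to the grading group $D = \Z$. That corollary states precisely that whenever $\grS$ is a computable $D$-graded ring, the category $\A = \Sfpgrmod$ of finitely presented graded $\grS$-modules is constructively \nameft{Abel}ian. Feeding the conclusion of Proposition~\ref{prop:projective.ring.computable} directly into the hypothesis of Corollary~\ref{coro:graded.computability} yields the claim; the entire argument is thus the single implication ``computability of the ring $\Rightarrow$ computability of its category of finitely presented graded modules.''

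I do not expect a separate obstacle here, as the statement is a routine composition of two already-established facts. The only point deserving a moment of care is to confirm that the notion of ``computable as a $D$-graded ring'' invoked in Corollary~\ref{coro:graded.computability}---namely the existence of an algorithm producing \emph{homogeneous} generating sets of the affine solution spaces of linear systems over $\grS$---matches exactly what Proposition~\ref{prop:projective.ring.computable} delivers. Since the latter was proved precisely by exhibiting homogeneous reductions and homogeneous syzygies, the two notions coincide by construction, and the corollary follows without additional work.
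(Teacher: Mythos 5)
Your proposal is correct and is exactly the paper's (implicit) argument: the paper states this corollary without proof precisely because it follows by feeding Proposition~\ref{prop:projective.ring.computable} into Corollary~\ref{coro:graded.computability} with $D=\Z$. Your additional remark checking that the homogeneous solving delivered by the proposition matches the notion of computability as a graded ring required by the corollary is the right point of care and is consistent with the paper.
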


\subsection{Coherent sheaves on projective spaces}

In this subsection we prove Theorem~\ref{thm:proj.computable} establishing the computability of the \nameft{Abel}ian category $\Coh \PP^n_B$ of coherent sheaves on the projective space $\PP^n_B$ (over $\Spec B$).
To this end let $\grS=B[x_0,\ldots,x_n]$ be a $\Z$-graded polynomial ring over $B$ with $\deg(x_0)=\ldots=\deg(x_n)=1$ and $\deg(B)=\{0\}$.
Denote by $\Sfpgrmod$ the category of finitely presented $\Z$-graded $\grS$-modules and by $\Sfpgrmod^0 \subset \Sfpgrmod$ its full subcategory of quasi-zero modules, i.e., those with $\grM_d = 0 $ for $d \gg 0$.

\begin{theorem}[{\cite[Corollary~4.2]{BL_SerreQuotients}}] \label{thm:coh.equiv.local.modules}
  Let $B$ be a \nameft{Noether}ian ring.
  The exact and essentially surjective sheafification functor $\Sfpgrmod \to \Coh \PP^n_B, M \mapsto \widetilde{\grM}$ induces an equivalence
  \[
    \Sfpgrmod / \Sfpgrmod^0 \xrightarrow{\sim} \Coh \PP^n_B
  \]
  of categories, where $\Sfpgrmod^0$ coincides with the kernel of the sheafification functor.
\end{theorem}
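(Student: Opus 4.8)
The plan is to deduce the equivalence from the universal property of the \nameft{Serre} quotient, treating exactness and essential surjectivity of sheafification as the asserted properties of the functor and concentrating on the kernel computation and on full faithfulness. First I would record that $\grM \mapsto \widetilde{\grM}$ is additive and exact (it is built from the exact localizations $\grM \mapsto \grM_{(\p)}$ at the relevant homogeneous primes, equivalently it is inherited from the $\Proj$ construction), and then identify its kernel. For a finitely presented graded $\grS$-module one has $\widetilde{\grM} = 0$ if and only if $\grM$ is annihilated by a power of the irrelevant ideal, which—since $\grM$ is finitely generated and $B$ is \nameft{Noether}ian—is equivalent to $\grM_d = 0$ for $d \gg 0$, i.e. $\grM \in \Sfpgrmod^0$. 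This is precisely the claimed description of the kernel.

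Given exactness together with this kernel computation, the universal property of the localization $\QQ\colon \Sfpgrmod \to \Sfpgrmod/\Sfpgrmod^0$ (it is initial among exact functors out of $\Sfpgrmod$ that annihilate $\Sfpgrmod^0$) yields a unique exact functor $\overline{F}\colon \Sfpgrmod/\Sfpgrmod^0 \to \Coh\PP^n_B$ with $\overline{F}\,\QQ$ equal to sheafification. Since $\QQ$ is the identity on objects and sheafification is essentially surjective (every coherent sheaf on $\PP^n_B$ is $\widetilde{\grM}$ for a finitely presented $\grM$, obtained by covering with standard opens and patching), the functor $\overline{F}$ is automatically essentially surjective. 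It therefore remains to prove that $\overline{F}$ is fully faithful.

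This is the heart of the matter. Unwinding both sides, full faithfulness asks that the canonical map
\[
  \Hom_{\Sfpgrmod/\Sfpgrmod^0}(\grM,\grN)
  \;=\; \varinjlim_{\substack{\grM' \hookrightarrow \grM,\ \grN' \hookrightarrow \grN\\ \grM/\grM',\,\grN' \in \Sfpgrmod^0}}
  \Hom_{\grS}(\grM',\grN/\grN')
  \;\longrightarrow\;
  \Hom_{\O_{\PP^n_B}}(\widetilde{\grM},\widetilde{\grN})
\]
be an isomorphism. I would establish the comparison isomorphism $\Hom_{\O}(\widetilde{\grM},\widetilde{\grN}) \cong \varinjlim_{d}\Hom_{\grS}(\grM_{\geq d},\grN)$ of \nameft{Serre}: a morphism of sheaves is built from, and is determined by, a module map defined on a high-enough truncation. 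I would then observe that the truncations $\grM_{\geq d}$ are cofinal among the submodules $\grM'$ with quasi-zero quotient, and that replacing $\grN$ by $\grN/\grN'$ with $\grN' \in \Sfpgrmod^0$ changes neither the sheaf $\widetilde{\grN}$ nor the colimit; matching the two colimits then identifies the displayed map with the comparison isomorphism.

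The hard part will be the comparison isomorphism itself. Injectivity is the statement that a module map sheafifying to zero already vanishes on a further truncation, which follows from the kernel computation applied to $\img$ of the map. Surjectivity—that every sheaf morphism is induced by a module map in high degree—is the genuinely nontrivial direction: it relies on \nameft{Noether}ianity together with \nameft{Serre}'s finite-generation and vanishing results, realizing $\bigoplus_d \Gamma(\PP^n_B,\SheafHom(\widetilde{\grM},\widetilde{\grN})(d))$ as a finitely generated graded module whose components recover $\Hom_{\grS}(\grM_{\geq d},\grN)$ for $d \gg 0$. This is exactly where the \nameft{Noether}ian hypothesis on $B$ is indispensable, and I expect it to be the main obstacle.
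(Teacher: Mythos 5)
The paper itself offers no proof of this statement: it is imported verbatim from \cite[Corollary~4.2]{BL_SerreQuotients}, where it is deduced from an abstract characterization of Serre quotient functors (an exact, essentially surjective functor with kernel $\C$ satisfying two lifting conditions on morphisms and objects induces an equivalence out of $\A/\C$). So there is no internal argument to compare against, and your proposal should be judged on its own; it runs the classical Serre/FAC route instead, and its architecture is sound. The kernel computation is correct (for finitely generated $M$, $\widetilde{M}=0$ iff $M$ is killed by a power of the irrelevant ideal iff $M_d=0$ for $d\gg 0$), the factorization through $\QQ$ via Gabriel's universal property of the quotient is correct, the cofinality of the truncations $M_{\geq d}$ among the admissible subobjects $M'$ is correct (it uses that $M/M'$ is finitely generated, hence bounded above once quasi-zero --- one of the places Noetherianity of $B$ enters), and reducing full faithfulness to the comparison isomorphism $\varinjlim_d \Hom_S(M_{\geq d},N)\cong \Hom_{\O}(\widetilde{M},\widetilde{N})$ is the right move. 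What your route buys is an explicit, self-contained argument; what the cited route buys is a reusable criterion that also handles the toric case (Theorem~\ref{thm:coh.equiv.toric.modules}) without redoing the colimit bookkeeping.

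One inaccuracy in your final step: the degree-$d$ component of $\bigoplus_e \Gamma\bigl(\PP^n_B,\SheafHom(\widetilde{M},\widetilde{N})(e)\bigr)$ is $\Hom_{\O}(\widetilde{M},\widetilde{N}(d))$, which is not naturally $\Hom_S(M_{\geq d},N)$, so the graded pieces of $\Gamma_*\SheafHom$ do not ``recover'' the terms of your colimit as stated. The standard mechanism for surjectivity is rather: given $\phi:\widetilde{M}\to\widetilde{N}$, apply $\Gamma_*=\bigoplus_d\Gamma(-(d))$ and use that the canonical maps $M_d\to\Gamma(\widetilde{M}(d))$ and $N_d\to\Gamma(\widetilde{N}(d))$ are isomorphisms for $d\gg 0$ (Serre finiteness, where $B$ Noetherian and $M,N$ finitely generated are used); this yields a graded map $M_{\geq d}\to N_{\geq d}\subseteq N$ sheafifying back to $\phi$, while injectivity is exactly your image argument. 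With that substitution the proof closes.
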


\begin{lemm} \label{lemm:decide_quasi_zero}
  Let the ring $B$ have effective coset representatives, i.e., $\grS=B[x_0,\ldots,x_n]$ satisfies \eqref{eq:ecr}.
  Then, one can decide whether a module is contained in $\Sfpgrmod^0$.
\end{lemm}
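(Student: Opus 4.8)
The plan is to reduce quasi-zeroness to the vanishing of a single, explicitly computable graded component $\grM_D$, and to decide that vanishing over the base ring $B$.

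First I would record a stabilization principle. If $\grM$ is generated in degrees $\le \delta$ (with $\delta$ read off the chosen finite presentation), then $\grM_{d+1} = \grS_1 \grM_d$ for every $d \ge \delta$: any element of degree $d+1 > \delta$ is an $\grS$-linear combination of generators of strictly smaller degree, so each homogeneous coefficient has positive degree and factors through $\grS_1$. Consequently $\grM_d = 0$ for one $d \ge \delta$ forces $\grM_e = 0$ for all $e \ge d$. Hence $\grM$ lies in $\Sfpgrmod^0$ if and only if $\grM_d = 0$ for \emph{some} $d \ge \delta$, and it remains to bound the search and to decide each individual test.

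Second, I would exploit that, by Proposition~\ref{prop:projective.ring.computable}, $\grS$ admits \nameft{Gröbner} bases with \emph{unique} reduced normal forms — this is exactly where the effective coset representatives of $B$ enter through \eqref{eq:ecr}. Fixing a degree-compatible term order and computing a \nameft{Gröbner} basis of the relation submodule presenting $\grM$, uniqueness of reduced normal forms identifies $\grM_d$, as a $B$-module, with the $B$-module of reduced elements of degree $d$. In particular, for every fixed $d$ one obtains a finite presentation of the $B$-module $\grM_d$, and deciding $\grM_d = 0$ becomes the solvability of a linear system over $B$, which is available since $B$ is computable. The leading-term (monomial) data of the \nameft{Gröbner} basis furthermore controls in which degrees reduced elements can occur; in the quasi-zero situation this staircase is finite and its top degree is bounded by a quantity $D \ge \delta$ computable from the degrees of the \nameft{Gröbner} basis elements.

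Combining the two steps, $\grM$ is quasi-zero if and only if $\grM_D = 0$ for this computable $D$, and the latter is decidable by the previous paragraph. The main obstacle is the second step: producing a genuinely computable degree bound $D$ valid over an arbitrary base $B$ rather than over a field. The point to verify is that the combinatorial finiteness of the leading-term staircase — classical over a field — transfers to $\grM$ over $B$ through the unique-reduced-normal-form identification $\grM_d \cong (\text{reduced elements of degree } d)$, so that a quasi-zero $\grM$ cannot acquire nonzero components above the combinatorially predicted bound; everything else is routine linear algebra over $B$.
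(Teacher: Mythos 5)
Your overall strategy --- reduce membership in $\Sfpgrmod^0$ to the vanishing of a single computable graded component $\grM_D$, which is then a linear-algebra problem over the computable ring $B$ --- is reasonable, and your stabilization principle $\grM_{d+1}=\grS_1\grM_d$ for $d\ge\delta$ is correct. But the step you yourself flag as ``the point to verify'' is a genuine gap, and as stated it is false. Over a base ring $B$ that is not a field, the leading-\emph{monomial} staircase does not control the degrees in which nonzero reduced elements occur, because a monomial lying \emph{above} the staircase still contributes a quotient $B/I$ to (the associated graded of) the corresponding component of $\grM$, where $I\subseteq B$ is the ideal generated by the leading coefficients of the Gr\"obner basis elements whose leading monomials divide it; this quotient is nonzero whenever those leading coefficients fail to generate the unit ideal. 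Concretely, take $B=\Z$, $\grS=\Z[x_0]$ and $\grM=\grS/(2x_0,x_0^2)$: here $\{2x_0,x_0^2\}$ is a Gr\"obner basis, the staircase of standard monomials is $\{1\}$ with top degree $0$, and $\grM$ is generated in degree $\delta=0$ and lies in $\Sfpgrmod^0$ (indeed $\grM_d=0$ for $d\ge 2$); nevertheless $\grM_0=\Z$ and $\grM_1=\Z/2$ are both nonzero, so testing $\grM_D=0$ for any $D$ read off from the top of the staircase returns the wrong answer. A correct computable bound does exist --- for instance $D=(n+1)(D_0-1)+1$ plus the maximal generator degree, with $D_0$ the maximal degree of a leading monomial, works because raising an exponent that is already $\ge D_0$ changes neither the set of applicable Gr\"obner basis elements nor the leading-coefficient ideal --- but establishing it is an argument about leading-coefficient ideals, not about standard monomials, so your proof as written does not go through.

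For comparison, the paper's proof sidesteps degree bounds entirely. Since $\Sfpgrmod^0$ is the kernel of the sheafification functor (Theorem~\ref{thm:coh.equiv.local.modules}), one has $\grM\in\Sfpgrmod^0$ if and only if the restrictions $(\grM_{x_i})_0$ to the $n+1$ standard affine charts all vanish; each $(\grM_{x_i})_0$ is finitely presented over the computable ring $(\grS_{x_i})_0=B[x_0/x_i,\ldots,x_n/x_i]$ by the dehomogenized presentation matrix, and deciding whether a finitely presented module over a computable ring is zero is a single linear-system solvability test. If you want to keep your degree-by-degree approach, you must replace the staircase argument by the leading-coefficient-ideal argument sketched above.
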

\begin{proof}
  Restricting $\grM \in \Sfpgrmod$ to the $i$-th open standard affine chart yields the module $(\grM_{x_i})_0$ over the polynomial ring $(\grS_{x_i})_0=B\left[\frac{x_0}{x_i},\ldots,\frac{x_n}{x_i}\right]$ (with the same presentation matrix as $M$).
  The sheafification $\widetilde{\grM} =0$ if and only if the restricted modules vanish for all $0\le i\le n$.
  (In case $B$ is a computable field then $\grM\in\Sfpgrmod^0$ if and only if the \nameft{Hilbert} polynomial vanishes.)
\end{proof}

Now we can prove Theorem~\ref{thm:proj.computable}, i.e., that the category $\Coh \PP^n_B$ is constructively \nameft{Abel}ian if $B$ is a computable commutative ring with effective coset representatives.

\begin{proof}[Proof of Theorem~\ref{thm:proj.computable}]
  First, the category of finitely presented graded $\grS$-modules is constructively \nameft{Abel}ian by Corollary~\ref{coro:SfpZgrmod_computable}.
  Second, we can decide whether a module is contained in $\Sfpgrmod^0$ by Lemma~\ref{lemm:decide_quasi_zero}.
  Thus, by Theorem~\ref{thm:gabriel.computable} the category $\Sfpgrmod / \Sfpgrmod^0$ is constructively \nameft{Abel}ian.
  The statement follows from the equivalence of categories in Theorem~\ref{thm:coh.equiv.local.modules}.
\end{proof}

The category of coherent sheaves over a closed subscheme $X$ of $\PP^n_B$ is also constructively \nameft{Abel}ian:
instead of graded $S$-modules, one considers finitely presented graded $S/I$-modules, where $I$ is the homogeneous ideal defining $X$.
Alternatively, one can work with the thick subcategory graded $S$-modules whose annihilators contain $I$; the essential image of the sheafification is then $\Coh X$.

\subsection{Coherent sheaves on toric varieties}

In this subsection we prove Theorem~\ref{thm:toric.computable} establishing the computability of the \nameft{Abel}ian category $\Coh X_\Sigma$ for a smooth toric variety $X_\Sigma$ with no torus factors (cf.\ \cite{CLS11} for notation).
Let $\grS=\CC[x_\rho \mid \rho \in \Sigma(1)]$ be the \nameft{Cox} ring of $X_\Sigma$.
This ring is graded by the divisor class group $\Cl X_\Sigma$.
Denote by $\Sfpgrmod$ the category of finitely presented graded $\grS$-modules.

\begin{theorem}[{\cite[Corollary~4.5]{BL_SerreQuotients}}] \label{thm:coh.equiv.toric.modules}
  Let $X_\Sigma$ be a toric variety with no torus factor.
  The exact and essentially surjective sheafification functor $\Sfpgrmod \to \Coh X_\Sigma, M \mapsto \widetilde{\grM}$ induces the equivalence
  \[
    \Sfpgrmod / \Sfpgrmod^0 \xrightarrow{\sim} \Coh X_\Sigma
  \]
   of categories where $\Sfpgrmod^0$ is defined as the kernel of the sheafification functor.
\end{theorem}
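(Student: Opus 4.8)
The plan is to verify the hypotheses of the general criterion going back to \nameft{Gabriel} \cite[III.1]{Gab_thesis}: if $F\colon \A \to \D$ is an exact functor of \nameft{Abel}ian categories whose kernel $\C := \ker F$ is thick, and $F$ is essentially surjective, then the induced functor $\overline{F}\colon \A/\C \to \D$ is an equivalence exactly when it is fully faithful, i.e. when the natural map $\varinjlim_{\grM/\grM',\,\grN'\in\C}\Hom_\A(\grM',\grN/\grN') \to \Hom_\D(F\grM,F\grN)$ is an isomorphism for all $\grM,\grN$. Here $F$ is the sheafification functor, $\D = \Coh X_\Sigma$, and $\C = \Sfpgrmod^0$. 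The argument parallels \nameft{Serre}'s classical treatment of projective space (our Theorem~\ref{thm:coh.equiv.local.modules}), now carried out over the \nameft{Cox} ring rather than the ordinary homogeneous coordinate ring.

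First I would recall the \nameft{Cox} quotient presentation \cite{CLS11}: since $X_\Sigma$ is smooth, hence simplicial, and has no torus factors, the rays $\Sigma(1)$ span, the divisor class group $\Cl X_\Sigma$ is the grading group of $\grS$, and $X_\Sigma$ is the geometric quotient $(\AA^{\Sigma(1)} \setminus Z)/G$ by the diagonalizable group $G = \Hom(\Cl X_\Sigma, \CC^*)$, where $Z = V(B(\Sigma))$ is cut out by the irrelevant ideal. The decisive consequence of the no-torus-factor hypothesis is that $\codim Z \geq 2$. Sheafification is then realized as the restriction of the $G$-equivariant quasi-coherent sheaf attached to $\grM$ on $\AA^{\Sigma(1)}$ to $\AA^{\Sigma(1)} \setminus Z$ followed by descent along the quotient map; as a composite of exact functors it is exact, and $\widetilde{\grM} = 0$ holds iff $\grM$ is supported on $Z$, which is exactly the condition $\grM \in \Sfpgrmod^0$, identifying the kernel.

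For essential surjectivity I would take a coherent sheaf on $X_\Sigma$, pull it back to a $G$-equivariant coherent sheaf on $\AA^{\Sigma(1)} \setminus Z$, and extend it across $Z$ using $\codim Z \geq 2$ together with normality of $\AA^{\Sigma(1)}$; the resulting equivariant coherent sheaf on affine space is $\widetilde{\grM}$ for a finitely presented graded module $\grM$. The hard part will be the remaining full faithfulness: one must show that every sheaf morphism $\widetilde{\grM} \to \widetilde{\grN}$ is induced by a graded homomorphism $\grM' \to \grN/\grN'$ with $\grM/\grM',\,\grN' \in \Sfpgrmod^0$, and that two such data agree as sheaf morphisms precisely when they become equal in the colimit. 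This is the point where $\codim Z \geq 2$ enters a second time, guaranteeing that sections extend over $Z$ so that global sheaf $\Hom$ is computed by the $B(\Sigma)$-saturated graded $\Hom$, matching the \nameft{Serre} colimit. Making the extension-across-$Z$ and saturation bookkeeping precise—controlling which truncations $\grM'$ and quotients $\grN/\grN'$ suffice and checking compatibility with the colimit maps—is the genuine work; once it is in place, \nameft{Gabriel}'s criterion yields the stated equivalence, which in the paper is recorded as \cite[Corollary~4.5]{BL_SerreQuotients}.
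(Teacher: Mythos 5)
The first thing to note is that the paper does not prove this statement at all: Theorem~\ref{thm:coh.equiv.toric.modules} is imported verbatim from \cite[Corollary~4.5]{BL_SerreQuotients} and no argument for it appears in this text, so there is no internal proof to compare your sketch against. Judged on its own terms, your outline follows the expected route (\nameft{Gabriel}'s criterion applied to the \nameft{Cox} quotient presentation), but it contains one claim that is actually false in the stated generality. The theorem assumes only that $X_\Sigma$ has no torus factors; it does not assume smoothness or even simpliciality. Your identification of the kernel, ``$\widetilde{\grM}=0$ iff $\grM$ is supported on $Z$'', holds only in the smooth case --- this is precisely the content of Proposition~\ref{prop:decide_quasi_zero_toric}, and the footnote there explicitly records that the characterization already fails for simplicial nonsmooth toric varieties (\cite[Example~5.3.11]{CLS11}, \cite[Prop.~3.5]{Cox95}). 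The reason is that for a non-free $G$-action a nonzero equivariant sheaf on $\AA^{\Sigma(1)}\setminus Z$ can have zero descent. Fortunately this step is not needed: $\Sfpgrmod^0$ is \emph{defined} as the kernel of sheafification, so all you must check is that this kernel is thick, which is automatic for the kernel of an exact functor. Relatedly, ``$X_\Sigma$ is the geometric quotient'' requires simpliciality; in general one only has a good categorical quotient, and sheafification must be described via the local charts $(\grM_{x^{\widehat{\sigma}}})_0$ rather than by naive descent.

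Two further points are under-argued. For essential surjectivity, extension across $Z$ via ``$\codim Z\ge 2$ plus normality'' is the argument for reflexive sheaves and sections of line bundles; for an arbitrary coherent sheaf one instead pushes forward along the open immersion to get a quasi-coherent (in general non-coherent) extension and then invokes the fact that every quasi-coherent sheaf on $X_\Sigma$ is a filtered colimit of sheafifications of finitely generated graded modules (\nameft{Cox}, \cite[\S 5.3]{CLS11}). And you correctly flag full faithfulness as ``the genuine work'' but leave it entirely to the reader; since that is exactly where the colimit over the pairs $(\grM',\grN')$ has to be matched with sheaf morphisms, the proposal as written establishes the easy parts and defers the essential one. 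So the sketch is a reasonable plan, but it is not yet a proof, and the kernel identification must be removed or restricted to the smooth case.
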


As $X_\Sigma$ is smooth the membership in the subcategory $\Sfpgrmod^0 \subset \Sfpgrmod$ is decidable.

\begin{prop}[{\cite[Cor.~3.6]{Cox95}, \cite[Prop.~5.3.10]{CLS11}}]\label{prop:decide_quasi_zero_toric}
  Let $X_\Sigma$ be a smooth toric variety with \nameft{Cox} ring $\grS$ and $\grM \in \Sfpgrmod$.
  Then $\widetilde{\grM} = 0$ if and only if some power of $B(\Sigma)$ is contained in the annihilator $\Ann_\grS \grM$, where $B(\Sigma) \unlhd \grS$ is the irrelevant ideal.\footnote{This is already false for simplicial nonsmooth toric varieties, cf.~\cite[Example~5.3.11]{CLS11} and \cite[Prop.~3.5]{Cox95}.}
  In particular, using well-known algorithms to compute annihilators and to test radical ideal membership (\nameft{Rabinowitsch} trick) one can decide the membership in $\Sfpgrmod^0$ for smooth toric varieties.
\end{prop}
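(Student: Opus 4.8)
The plan is to reduce the vanishing of the sheafification $\widetilde{\grM}$ to a statement about the support of $\grM$ as an $\grS$-module, and then to translate that support condition into a radical-membership condition on $\Ann_\grS\grM$ that is amenable to Gröbner bases computations. First I would invoke \nameft{Cox}'s realization of $X_\Sigma$ as a quotient: writing $Z(\Sigma):=V(B(\Sigma))\subseteq\AA^{\Sigma(1)}$ for the vanishing locus of the irrelevant ideal and $G:=\Hom(\Cl X_\Sigma,\CC^*)$ for the diagonalizable group dual to the grading, the variety $X_\Sigma$ is the geometric quotient of $\AA^{\Sigma(1)}\setminus Z(\Sigma)$ by $G$. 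Because $X_\Sigma$ is smooth (and has no torus factor, as in Theorem~\ref{thm:coh.equiv.toric.modules}), the isotropy groups are trivial, so the $G$-action is free and the quotient map $\pi$ is faithfully flat (indeed a principal $G$-bundle). Up to this identification the sheafification functor is the composite of localizing $\grM$ to $\AA^{\Sigma(1)}\setminus Z(\Sigma)$ and descending the resulting $G$-equivariant sheaf along $\pi$. Faithful flatness of $\pi$ then gives that $\widetilde{\grM}=0$ if and only if the localization of $\grM$ to $\AA^{\Sigma(1)}\setminus Z(\Sigma)$ vanishes, which is equivalent to $\Supp_\grS\grM\subseteq Z(\Sigma)$.

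Next I would translate this support inclusion by the Nullstellensatz over $\CC$. Since $\Supp_\grS\grM=V(\Ann_\grS\grM)$ for the finitely generated module $\grM$, we have $\Supp_\grS\grM\subseteq Z(\Sigma)=V(B(\Sigma))$ if and only if $B(\Sigma)\subseteq\sqrt{\Ann_\grS\grM}$. As $B(\Sigma)$ is finitely generated, this last inclusion holds exactly when some power $B(\Sigma)^k$ lies in $\Ann_\grS\grM$: one direction is immediate from $g\in B(\Sigma)\Rightarrow g^k\in B(\Sigma)^k\subseteq\Ann_\grS\grM$, and the converse is the standard fact that a finitely generated ideal contained in a radical has a power contained in the corresponding ideal (bound the exponent by a pigeonhole argument on the finitely many generators). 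This establishes the claimed equivalence.

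For decidability I would then proceed in two routine steps. One computes $\Ann_\grS\grM$ for the finitely presented module $\grM=\coker(\grS^a\to\grS^b)$ by the usual colon-ideal computation via Gröbner bases, obtaining explicit generators. It remains to decide, for each of the finitely many monomial generators $x^{\widehat\sigma}$ of $B(\Sigma)$, whether $x^{\widehat\sigma}\in\sqrt{\Ann_\grS\grM}$; this radical-membership test is the \nameft{Rabinowitsch} trick, i.e.\ a single Gröbner basis computation in $\grS[t]$ checking whether $1\in(\Ann_\grS\grM,\,1-t\,x^{\widehat\sigma})$. The module lies in $\Sfpgrmod^0$, the kernel of the sheafification functor of Theorem~\ref{thm:coh.equiv.toric.modules}, precisely when all of these finitely many tests succeed.

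The main obstacle is the geometric reduction in the first step, which is exactly where smoothness is indispensable: freeness of the $G$-action, equivalently triviality of the isotropy, is what makes $\pi$ a principal bundle and hence faithfully flat, permitting the clean descent description of sheafification. For merely simplicial, non-smooth $X_\Sigma$ the quotient is only geometric with nontrivial finite stabilizers, the descent acquires invariants under these finite groups, and the equivalence between $\widetilde{\grM}=0$ and $B(\Sigma)^k\subseteq\Ann_\grS\grM$ breaks down, as recorded in the footnote. By contrast, once the reduction is in hand the Nullstellensatz translation and the two computational steps are entirely standard.
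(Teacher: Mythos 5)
Your argument is correct, but you should be aware that the paper offers no proof of the main equivalence: the statement that $\widetilde{\grM}=0$ if and only if $B(\Sigma)^k\subseteq\Ann_\grS\grM$ for some $k$ is imported wholesale from \cite[Cor.~3.6]{Cox95} and \cite[Prop.~5.3.10]{CLS11}, and the only part the paper argues itself is the algorithmic ``in particular'' clause --- where your procedure (compute $\Ann_\grS\grM$ by Gr\"obner bases, then test each monomial generator $x^{\widehat{\sigma}}$ of $B(\Sigma)$ for membership in $\sqrt{\Ann_\grS\grM}$ via the Rabinowitsch trick) is exactly the one the paper has in mind. Your reconstruction of the cited equivalence is the standard proof underlying those references: smoothness of $\Sigma$ makes the $G$-action on $\AA^{\Sigma(1)}\setminus Z(\Sigma)$ free (trivial isotropy for smooth cones), so the Cox quotient is a faithfully flat principal bundle, descent identifies $\widetilde{\grM}=0$ with the vanishing of $\grM$ off $Z(\Sigma)$, i.e.\ with $\Supp_\grS\grM\subseteq V(B(\Sigma))$, and the Nullstellensatz together with the finite generation of $B(\Sigma)$ converts this into the existence of a power $B(\Sigma)^k\subseteq\Ann_\grS\grM$. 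You also correctly locate where smoothness enters and why the simplicial non-smooth case of the footnote fails: with nontrivial finite stabilizers the degree-zero part $(\grM_{x^{\widehat{\sigma}}})_0$ no longer detects the vanishing of all of $\grM_{x^{\widehat{\sigma}}}$. A minor remark in your favor: the descent argument handles maximal cones of arbitrary dimension, which is slightly more general than the explicit substitution recipe the paper describes after the proposition, since that recipe is only formulated for fans all of whose maximal cones are full-dimensional.
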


Now we can prove Theorem~\ref{thm:toric.computable}, i.e., that the category $\Coh X_\Sigma$ is constructively \nameft{Abel}ian if $X_\Sigma$ is a smooth toric variety without torus factors.

\begin{proof}[Proof of Theorem~\ref{thm:toric.computable}]
  First, the category of finitely presented graded $\grS$-modules is constructively \nameft{Abel}ian by Corollary~\ref{coro:Sfpgrmod_computable}.
  Second, the membership in $\Sfpgrmod^0$ is decidable by Proposition~\ref{prop:decide_quasi_zero_toric}.
  Thus, by Theorem~\ref{thm:gabriel.computable} the category $\Sfpgrmod / \Sfpgrmod^0$ is constructively \nameft{Abel}ian.
  The statement follows from the equivalence of categories in Theorem~\ref{thm:coh.equiv.toric.modules}.
\end{proof}

In the case where all maximal cones of the smooth fan $\Sigma$ are full dimensional there is a more efficient way to decide membership in $\Sfpgrmod^0$.
Let $\sigma$ be a maximal cone, full dimensional by assumption.
Consider the inclusions $U_\sigma \subset X_\Sigma$, where $U_\sigma \cong \CC^n$.
  The inclusion map $\phi_\sigma: \CC^{\sigma(1)} \hookrightarrow \CC^{\Sigma(1)}$ sending $(a_\rho)_{\rho \in \sigma(1)}$ to $(b_\rho)_{\rho \in \Sigma(1)}$ with
  \[
    b_\rho= \begin{cases} a_\rho & \rho \in \sigma(1) \\ 1 & \mbox{otherwise} \end{cases}
  \]
induces the inclusion $U_\sigma \hookrightarrow X_\Sigma$ \cite[Prop.~5.2.10]{CLS11}.
Thus, to compute $\Gamma(U_\sigma,\widetilde{\grM}) = (M_{x^{\widehat{\sigma}}})_0$ as a module over $\Gamma(U_\sigma,\O_{X_\Sigma}) = (S_{x^{\widehat{\sigma}}})_0$, we only need to substitute $1$ for $x_\rho$ whenever $\rho \not\in \sigma(1)$ in the presentation matrix of $\grM$.
Then $\widetilde{\grM}$ is the zero sheaf if and only if $\Gamma(U_\sigma,\widetilde{\grM})=0$ for all maximal cones $\sigma \in \Sigma$.

For multigraded \nameft{Hilbert} polynomials and their relation to modules in $\Sfpgrmod^0$ we refer the reader to \cite[Lemma~2.12]{MS05}.

For nonsmooth toric varieties (with no torus factor) \nameft{S.~Gutsche} outlined in \cite{Gutsche_KernelOfSh} an algorithm to compute the $(S_{x^{\widehat{\sigma}}})_0$-module $(M_{x^{\widehat{\sigma}}})_0$ and hence to decide the membership in $\Sfpgrmod^0$ in the general case.

\begin{appendix}

\section{Inverse semigroups} \label{sec:isg}

In this section we recall some basic facts about inverse semigroups (cf.~\cite{Law98}).

A semigroup $H$ is a set equipped with a associative binary operation $\circ:H \times H \to H$.
It is called commutative\footnote{The adjective \nameft{Abel}ian is rarely used.} if $h \circ h' = h' \circ h$ for all $h,h' \in H$.
A monoid is a semigroup with a neutral element, i.e., a (necessarily unique) $n \in H$ satisfying $n \circ h = h = h \circ n$ for all $h \in H$.
An element $h \in H$ is called regular if it has at least one inverse, i.e., an element $y \in H$ satisfying $h \circ y \circ h=h$ and $y \circ h \circ y = y$.
It immediately follows that the two elements $h \circ y$ and $y \circ h$ are idempotents and each idempotent $e \in H$ is trivially of this form (set $h=y=e$).
A semigroup is called regular if all its elements are regular.
It can be shown that inverses in regular semigroups are unique if and only if all idempotents commute, providing two equivalent definitions of an \textbf{inverse semigroup}.
Denote the subset of idempotents by $E(H)$.
It is then the largest idempotent inverse (and hence commutative) subsemigroup, or equivalently, a meet-semilattice for the partial order $e \leq f :\!\!\iff e = e \wedge f$ defined by the meet $e \wedge f := e \circ f = f \circ e$.
It is bounded when $H$, and hence $E(H)$, is a monoid.
The unique inverse of $h$ is denoted by $h^{-1}$ (unless the semigroup is written additively then by $-h$).
The mapping $h \mapsto h^{-1}$ is an involution on $H$, i.e., $(h^{-1})^{-1} = h$ and $(h \circ g)^{-1} = g^{-1} \circ h^{-1}$.

Setting
\[
  h \leq g \iff h = g \circ h^{-1} \circ h \quad (\mbox{or equivalently } h = h \circ h^{-1} \circ g)
\]
defines a natural partial order on an inverse semigroup $H$ which extends the one on $E(H)$.
The meet-semilattice $E(H)$ is a singleton only if $H$ is a group.

A congruence $\sim$ on an (inverse) semigroup $H$ is an equivalence relation which respects the binary operation: $g \sim g'$ and $h \sim h'$ then $g \circ h \sim g' \circ h'$.
The set $H / \sim$ of equivalence classes (with binary operation induced by $\circ$) is again an (inverse) semigroup.
A congruence relation is called a group congruence if $H / \sim$ is a group.
The congruence $\sigma$ defined by
\[
  h \sim_\sigma g :\!\!\iff \exists x \in H: x \leq h,g
\]
is \textbf{the smallest\footnote{w.r.t.\  inclusion} group congruence} of the inverse semigroup $H$.
All idempotents are easily seen to be $\sigma$-equivalent but their $\sigma$-equivalence class $Z(H)\supseteq E(H)$ might contain non-idempotents, otherwise $\sigma$ is called idempotent pure.
In our applications $\sigma$ is far from being idempotent pure.

\begin{rmrk} \label{rmrk:zeroid_semirings}
  We call $Z(H)$ the \textbf{zeroid} of $H$ following \nameft{Bourne} and \nameft{Zassenhaus}, who first introduced it in \cite[Definition~4]{BZ58} as a distinguished two-sided ideal in a semiring with a commutative additive semigroup $H$.
  In that context it the smallest strongly closed ideal (or $h$-ideal) in the sense of \cite{Iiz59}; an additively inverse semiring\footnote{i.e., with a commutative inverse semigroup of addition} modulo its zeroid is a ring.
\end{rmrk}

In the proof of Proposition~\ref{prop:Zeroid_of_Gabriel} we need the following simple lemma:
\begin{lemm} \label{lemm:zeroid_characterization}
  Let $H$ is an inverse semigroup.
  Then
  \[
    Z(H) = \{ z \in H \mid \exists x \in H: x = x \circ z \} = \{ z \in H \mid \exists x \in H: x = z \circ x \} \mbox{.}
  \]
  Obviously, requiring $x \in E(H)$ does not alter the statement.
\end{lemm}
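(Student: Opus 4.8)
The plan is to show that all three sets coincide with
\[
  S := \{\, z \in H \mid \exists\, e \in E(H): e \leq z \,\},
\]
the set of elements lying above some idempotent in the natural partial order. The entire argument rests on the elementary observation that for an \emph{idempotent} $e$ the two forms of the defining relation collapse (since $e^{-1} = e$ and $e \circ e = e$) to
\[
  e \leq z \iff e = e \circ z \iff e = z \circ e \mbox{.}
\]

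First I would dispose of the two ``one-sided'' sets. If $x = x \circ z$, then left-multiplying by $x^{-1}$ yields $e = e \circ z$ for the idempotent $e := x^{-1} \circ x$, so $e \leq z$; conversely any idempotent $e \leq z$ satisfies $e = e \circ z$, so $x := e$ is a witness. This gives $\{\, z \mid \exists x:\, x = x \circ z \,\} = S$. The symmetric computation, starting from $x = z \circ x$ and right-multiplying by $x^{-1}$, produces the idempotent $f := x \circ x^{-1}$ with $f = z \circ f$, whence $\{\, z \mid \exists x:\, x = z \circ x \,\} = S$ as well. Since the witnesses manufactured here are idempotent, this step simultaneously establishes the closing remark that one may demand $x \in E(H)$ without changing anything.

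It remains to identify $S$ with the zeroid $Z(H)$, i.e.\ the common $\sigma$-class of the idempotents. The inclusion $S \subseteq Z(H)$ is immediate: an idempotent $e \leq z$ exhibits $e \leq z$ and $e \leq e$, so $z \sim_\sigma e \in Z(H)$. For the reverse inclusion, $z \in Z(H)$ means $z \sim_\sigma e$ for some idempotent $e$, so by definition of $\sigma$ there is a $c$ with $c \leq z$ and $c \leq e$. The key step---and the only point that is not a pure unwinding of definitions---is that such a $c$ is automatically idempotent: from $c \leq e$ we read off $c = e \circ c^{-1} \circ c$, which is a product of the two idempotents $e$ and $c^{-1} \circ c$; these commute (as all idempotents do in an inverse semigroup), and a product of commuting idempotents is idempotent. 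Hence $c \in E(H)$ with $c \leq z$, so $z \in S$. I expect this commuting-idempotents argument to be the main obstacle; once it is in place, combining it with the reductions above yields $Z(H) = S = \{\, z \mid \exists x:\, x = x \circ z \,\} = \{\, z \mid \exists x:\, x = z \circ x \,\}$, with idempotent witnesses throughout.
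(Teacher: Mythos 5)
Your proof is correct and follows essentially the same route as the paper's: both hinge on the observation that a $\sigma$-witness $c \leq z,e$ is forced to be idempotent because it is a product of commuting idempotents, and that for idempotent $e$ the relation $e \leq z$ collapses to $e = e \circ z$ (equivalently $e = z \circ e$). Your version is merely more explicit, routing everything through the intermediate set $S$ and spelling out the second, symmetric characterization and the $x \in E(H)$ remark, which the paper dispatches as obvious.
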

\begin{proof}
  For $z \in Z(H)$ we have $z \sim_\sigma e \in E(H)$, i.e., $\exists x \in H: x = x \circ x^{-1} \circ e$ and $x = x \circ x^{-1} \circ z$.
  The first equality implies $x \in E(H)$ and hence $x=x \circ z$ by the second.
 For the reverse inclusion let $x=x\circ z$ for $x \in E(H)$.
 Then $x= x \circ z = x \circ x^{-1} \circ z$, i.e., $x \leq x,z$ which means that $z \sim_\sigma x \in E(H)$.
\end{proof}

In the rest of this section we consider \emph{commutative} semigroups which we write additively.
A neutral element, in case it exists, will be denoted by $0$.

Note that a commutative regular semigroup is the same thing as a commutative inverse semigroup.
In this case
\[
  0(h) := h-h = -h + h \in E(H)
\]
is called \textbf{the idempotent associated to $h$}. More generally we define an action of $\Z$ as follows
\[
  z(h) = \begin{cases}
    \underbrace{h + \cdots + h}_z &, z \in \Z_{>0} \mbox{,} \\
    0(h) &, z = 0 \mbox{,} \\
    (-z)(-h) &, z \in \Z_{<0} \mbox{.}
  \end{cases}
\]

Hence, a commutative inverse semigroup $H$ defines a direct system of \nameft{Abel}ian groups
\[
  \left( H_e := \{ h \in H \mid 0(h) = e \} \right)_{e \in E(H)}
\]
over the meet-semilattice $E(H)$.
In particular, $H$ set-theoretically a disjoint union of \nameft{Abel}ian groups $H=\dot{\bigcup}_{e \in E(H)} H_e$.
The converse also holds as a special case of \cite[Theorem~4.11]{CP61}:
Any direct system of \nameft{Abel}ian groups over a meet-semilattice $L$ defines a commutative inverse semigroup $H$ with $E(H) \cong L$.

Denoting by $H_\infty := \varinjlim_{e \in E(H)} H_e$ the colimit \nameft{Abel}ian group we conclude:
\begin{coro} \label{coro:zeroid_is_kernel_of_colimit}
  The zeroid $Z(H)$ of the commutative inverse semigroup $H$ is the kernel of the canonical homomorphism $H \to H_\infty$.
  In particular, $H_\infty \cong H / Z(H)$.
\end{coro}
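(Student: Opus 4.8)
The plan is to describe the canonical homomorphism $\pi\colon H \to H_\infty$ explicitly, compute its kernel (the preimage of the zero of $H_\infty$) using Lemma~\ref{lemm:zeroid_characterization}, and then deduce the displayed isomorphism formally from the facts recalled above that $Z(H)$ is the $\sigma$-class of the idempotents and that $H/\sigma$ is a group. The only genuine content is the kernel computation; the isomorphism is a formal consequence.

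First I would recall how equality is detected in the directed colimit $H_\infty = \varinjlim_{e\in E(H)} H_e$. For $f \leq e$ in the meet-semilattice $E(H)$ the structure map $H_e \to H_f$ is $a \mapsto a + f$ (multiplication by the smaller idempotent); this is well defined since $0(a+f) = e+f = f$, and $E(H)$ is downward directed because $e \wedge f = e + f$ is a common lower bound. Hence for $a \in H_e$ and $b \in H_f$ one has $\pi(a) = \pi(b)$ in $H_\infty$ if and only if $a + g = b + g$ for some idempotent $g \leq e \wedge f$. In particular $\pi$ is a surjective homomorphism of commutative semigroups (as $H = \dot{\bigcup}_e H_e$) which sends every idempotent to the zero of $H_\infty$.

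Next I would compute $\pi^{-1}(0)$. An element $h \in H_e$ satisfies $\pi(h) = 0$ exactly when it agrees in the colimit with the idempotent $e$ (whose image is the zero of $H_\infty$), i.e.\ when $h + g = e + g = g$ for some idempotent $g \leq e$. This is precisely the condition $\exists\, x \in E(H)\colon x = x + h$ of Lemma~\ref{lemm:zeroid_characterization}, so $\pi^{-1}(0) = Z(H)$, which is the first assertion. The step requiring care here is the directed-colimit equality criterion together with the observation that one may always witness the vanishing by an \emph{idempotent} $g$, which is exactly the idempotent form of the zeroid characterisation; this is where I expect the main (though modest) obstacle to lie.

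Finally, for the isomorphism, since $H_\infty$ is a group the surjection $\pi$ induces a group congruence on $H$, which therefore contains the smallest group congruence $\sigma$; thus $\pi$ factors through a surjection $\bar\pi\colon H/\sigma \to H_\infty$, and $H/\sigma = H/Z(H)$ because $Z(H)$ is precisely the $\sigma$-class of the idempotents. For injectivity of $\bar\pi$ I would argue as follows: if $\pi(h) = \pi(h')$, then since $\pi$ preserves inverses and $H_\infty$ is a group, $\pi(h - h') = \pi(h) - \pi(h') = 0$, so $h - h' \in Z(H)$ by the first part; hence $h - h'$ lies in the identity class of $\sigma$ and $[h] = [h']$ in $H/\sigma$. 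Therefore $\bar\pi$ is an isomorphism and $H_\infty \cong H/Z(H)$.
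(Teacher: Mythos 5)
Your proof is correct and follows exactly the route the paper intends: the paper states this corollary without proof, as an immediate consequence of the Clifford decomposition $H=\dot{\bigcup}_{e\in E(H)}H_e$ with structure maps $a\mapsto a+f$ and of Lemma~\ref{lemm:zeroid_characterization}, and your write-up supplies precisely those omitted details (the filtered-colimit equality criterion, the kernel computation, and the factorization through the smallest group congruence $\sigma$). The only micro-detail left tacit is that a witness $x=x+h$ with $x\in E(H)$ automatically satisfies $x\le 0(h)$ (since $x=x+h$ gives $x=x-h$ and hence $x+0(h)=x$), so it really does serve as a stage $g\le e$ in the colimit; this is a one-line check and does not affect correctness.
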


\begin{rmrk}
  The tensor product of commutative semigroups introduced in \cite{Gri69} only assumes bilinearity.
  This is enough for it to restrict to the full subcategory of commutative inverse semigroups: $-(h' \otimes h) = -h' \otimes h = h' \otimes (-h)$.
  
  However, it does \emph{not} restrict to the \emph{non}full subcategory of commutative monoids which has its own tensor product, characterizable as follows:
  The endofunctor $- \otimes H$ is the left adjoint of the (internal) $\Hom(H,-)$ functor, where $\Hom(H,H')$ of two commutative monoids is again a commutative monoid with pointwise multiplication as binary operation and neutral element given by the constant map to the neutral element of $H'$.
  Compared to the the tensor product for commutative semigroups it additionally satisfies $0_{H'} \otimes h = 0_{H'} \otimes 0_H= h' \otimes 0_H$, which implies that the latter is the neutral element of the tensor product monoid.
  The unit object of this closed symmetric monoidal structure is given by the natural numbers $(\N,+,0)$.
\end{rmrk}

There is, however, a \emph{non}closed symmetric monoidal structure on the full subcategory of commutative \emph{inverse} monoids defined as follows:
\begin{defn} \label{defn:tensor_cim}
  The tensor product of two commutative inverse monoids is the tensor product of their underlying commutative semigroups modulo the extra relations stating that $0_{H'} \otimes 0_H$ is the neutral element.\footnote{Now $0_{H'} \otimes h = 0_{H'} \otimes 0(h)$ and $h' \otimes 0_H=0(h') \otimes 0_H$ are idempotents but not necessarily neutral or equal.}
  The unit object of this tensor product is the \nameft{Abel}ian group $\Z$ and the isomorphisms $\Z \otimes H \cong H \cong H \otimes \Z$ are given by the above described action of $\Z$.
\end{defn}
This last tensor product has no right adjoint as it does not preserve the initial object (=zero object=trivial monoid): $\{0\} \otimes (\{0,\infty\},+) \cong (\{0,\infty\},+) \ncong \{0\}$.
Still, restricted to the full subcategory of \nameft{Abel}ian groups it yields the standard tensor product, where, again, $0_{H'} \otimes h = 0_{H'} \otimes 0_H= h' \otimes 0_H$ holds\footnote{\nameft{Proof}. $0_{H'} \otimes h + \underline{g' \otimes g} = (g'-g') \otimes h + \underline{g' \otimes g} = g' \otimes h - g' \otimes h + \underline{g' \otimes g} =g' \otimes (h-h+g) = \underline{g' \otimes g}$.}.

\section{Constructively \nameft{Abel}ian categories} \label{sec:ComputableAbelian}

In this section we sum up the notation from our previous paper \cite{BL}, which introduced the notion of a computable or constructively \nameft{Abel}ian category as a constructive setup for homological algebra.
If $\A$ is a constructively \nameft{Abel}ian category then any construction becomes algorithmic if it only depends on $\A$ being an \nameft{Abel}ian category.
See \cite{BaSF} for a non-trivial example, which details a construction of spectral sequences (of filtered complexes) only using the axioms of an \nameft{Abel}ian category.
However, this approach goes beyond developing algorithms for specific homological constructions; it is rather a framework which automatically turns \emph{any} construction only relying on the category being \nameft{Abel}ian\footnote{This can be done for several enrichments along the same lines \cite{BL_Sheaves,BL_ExtComputability}.} into an algorithm.

\begin{defn} \label{defn:computable}
  Let $\A$ be an \nameft{Abel}ian category.
  We say that $\A$ is \textbf{constructively \textbf{Abel}ian} (or \textbf{computable}) if all existential quantifiers and disjunctions appearing in the axioms can be turned into constructive ones.\label{defn:computable.Abelian}
\end{defn}

The following list only emphasizes the existential quantifiers and disjunctions of a \nameft{Abel}ian category, i.e., the axioms that need to be turned into constructive ones for computability.
Thus, we suppress the universal properties needed to correctly formulate some of the points below, as we assume that they are well-known to the reader.
A detailed treatise can be found, in the \texttt{arXiv} version of \cite[Appendix A]{BL}.

Let $M,N,M_1,M_2$ be objects in $\A$.\\
$\A$ is a \textbf{category}
\begin{enumerate}
  \item\label{Ab_IdentityMatrix} For any object $M$ there \emph{exists} an \textbf{identity morphism} $1_M$.
  \item\label{Ab_Compose} For any two composable morphisms $\phi,\psi$ there \emph{exists} a \textbf{composition} $\phi\psi$.
  \item\label{Ab_HomSetMember} $\Hom_\A(M,N)$ is a set with \emph{decidable} element membership.
  \item\label{Ab_HomSetEqual} Equality of two morphisms in $\Hom_\A(M,N)$ is \emph{decidable}.
\suspend{enumerate}
$\A$ is a category \textbf{enriched over commutative inverse monoids\footnote{Cf.~Definition~\ref{defn:tensor_cim}.}}:
\resume{enumerate}
  \item\label{Ab_AddMat} There \emph{exists} an \textbf{addition} $(\phi,\psi)\mapsto \phi+\psi$ in $\Hom_\A(M,N)$.
  \item\label{Ab_SubMat} There \emph{exists} a \textbf{subtraction} $(\phi,\psi)\mapsto \phi-\psi$ in $\Hom_\A(M,N)$.
  \item\label{Ab_ZeroMatrix} For all objects $M,N$ there \emph{exists} a \textbf{zero morphism} $0_{MN}$.
\suspend{enumerate}
$\A$ is a category \textbf{with zero}:
\resume{enumerate}
  \item There \emph{exists} a \textbf{zero object} $0$.
\suspend{enumerate}
$\A$ is a \textbf{pre-additive} category:
\resume{enumerate}
  \item\label{Ab_AbelianGroup} $\Hom_\A(M,N)$ with the above $(+,-,0_{MN})$ is an \nameft{Abel}ian group.
\suspend{enumerate}
$\A$ is an \textbf{additive} category:
\resume{enumerate}
  \item\label{Ab_DiagMatProd} There \emph{exists} a \textbf{product} $M_1\oplus M_2$ and \textbf{projections} $\pi_i: M_1\oplus M_2 \to M_i$ such that
  \item\label{Ab_UnionOfRows} for all pairs of morphisms $\psi_i:N\to M_i$, $i=1,2$ there \emph{exists} a unique \textbf{pairing} $\{\psi_1,\psi_2\}:N\to M_1\oplus M_2$.
\suspend{enumerate}
$\A$ is a \textbf{pre-\nameft{Abel}ian} category:
\resume{enumerate}
  \item\label{Ab_SyzygiesGenerators} For any morphism $\phi:M\to N$ there \emph{exists} a \textbf{kernel} $\ker \phi \stackrel{\kappa}{\hookrightarrow} M$, such that
  \item\label{Ab_DecideZeroEffectively} for any morphism $\tau:L\to M$ with $\tau\phi=0$ there \emph{exists} a unique \textbf{lift} $\liftofCalongB{\kappa}{\tau}: L\to \ker \phi$ of $\tau$ along $\kappa$, i.e., $(\liftofCalongB{\kappa}{\tau}) \kappa = \tau$.
  \item\label{Ab_UnionOfRows_IdentityMatrix} For any morphism $\phi:M\to N$ there \emph{exists} a \textbf{cokernel} $N \stackrel{\epsilon}{\twoheadrightarrow} \coker \phi$, such that
  \item\label{Ab_colift} for any morphism $\eta:N\to L$ with $\phi\eta=0$ there \emph{exists} a unique \textbf{colift} $\coliftofCalongB{\epsilon}{\eta}: \coker \phi \to L$ of $\eta$ along $\epsilon$, i.e., $\epsilon (\coliftofCalongB{\epsilon}{\eta}) = \eta$.
\suspend{enumerate}
$\A$ is an \textbf{\nameft{Abel}ian} category:
\resume{enumerate}
  \item\label{Ab_MonoLift} Any mono is the kernel of its cokernel, i.e., for any mono $\kappa:K \to M$ with cokernel epi $\phi$ and any morphism $\tau:L\to M$ with $\tau\phi=0$ there \emph{exists} a unique \textbf{lift} $\liftofCalongB{\kappa}{\tau}: L\to \ker \phi$ of $\tau$ along $\kappa$, i.e., $(\liftofCalongB{\kappa}{\tau}) \kappa = \tau$.
  \item\label{Ab_EpiColift} Any epi is the cokernel of its kernel, i.e., for any epi $\epsilon:N \to C$ with kernel mono $\phi$ and any morphism $\eta:N\to L$ with $\phi\eta=0$ there \emph{exists} a unique \textbf{colift} $\coliftofCalongB{\epsilon}{\eta}: \coker \phi \to L$ of $\eta$ along $\epsilon$, i.e., $\epsilon (\coliftofCalongB{\epsilon}{\eta}) = \eta$.
\end{enumerate}

\begin{exmp}
  The category of finitely generated torsion-free \nameft{Abel}ian groups is pre-\nameft{Abel}ian but not \nameft{Abel}ian.
  This follows as the mono $2:\Z \to \Z$ is not the kernel (which is the zero morphism $\Z \to 0$) of its cokernel.
  In particular, $2:\Z \to \Z$ is mono and epi but not iso.
\end{exmp}

\section{Example of a spectral sequence computation for sheaves} \label{sec:ex}
  
We demonstrate\footnote{using the packagnes of the \texttt{homalg} project \cite{homalg-project}; all computations can be reproduced on the \texttt{homalg}-online server for which only a web browser is required (\url{http:// homalg.math.rwth-aachen.de/}).} the difference between computations in the categories $\A=\Sfpgrmod$ and $\A/\C = \Sfpgrmod/\Sfpgrmod^0 \simeq \Coh \PP^n$ by computing the so-called \emph{grade filtration} of a graded $S$-module $\grM \in \A$ and of its sheafification $\shF = \widetilde{M} \in \A/\C$, respectively.
Codomains naturally appear in the latter computation.

Below we use the bidualizing spectral sequence to compute the grade filtration following \cite[Appendix B]{BaSF}.
\nameft{Quadrat}'s alternative approach to the grade filtration \cite{QGrade} is also general enough to allow the passage to quotient categories of coherent sheaves.
%\todo[inline]{fix citation}

Consider the graded ring $\grS=\Q[x,y,z]$.
We define a graded $\grS$-module $\grM \in \A=\Sfpgrmod$ on $6$ generators satisfying $6$ relations given by the rows of the matrix \texttt{mat} below.

{\footnotesize
  \begin{Verbatim}[commandchars=!@\%,frame=single]
!color@blue%@gap>%!color@red%@ LoadPackage( "GradedModules" );%
true
!color@blue%@gap>%!color@red%@ Q := HomalgFieldOfRationalsInSingular( );;%
!color@blue%@gap>%!color@red%@ S := GradedRing( Q * "x,y,z" );;%
!color@blue%@gap>%!color@red%@ mat := HomalgMatrix( "[ \%
!color@blue%@>%!color@red%@ -x^2*z+x*y*z+x*z^2,y^2*z,-x*z+y*z,x-y,0,   0,   \%
!color@blue%@>%!color@red%@ -x^3+x^2*y+x^2*z,  x*y^2,-x^2+x*y,0,  x-y, -x*y,\%
!color@blue%@>%!color@red%@ 0,                 0,    0,       x*y,-y*z,0,   \%
!color@blue%@>%!color@red%@ 0,                 0,    0,       x^2,-x*z,0,   \%
!color@blue%@>%!color@red%@ 0,                 0,    0,       x*z,-z^2,0,   \%
!color@blue%@>%!color@red%@ 0,                 0,    0,       0,  0,   z    \%
!color@blue%@>%!color@red%@ ]", 6, 6, S );%
<A 6 x 6 matrix over a graded ring>
!color@blue%@gap>%!color@red%@ M := LeftPresentationWithDegrees( mat, [ 0, 0, 1, 2, 2, 1 ] );%
<A graded module presented by 6 relations for 6 generators>
  \end{Verbatim}
  }
  We define its sheafification $\shF=\widetilde{\grM}$ over $\PP^2=\Proj(S)$.
  {\footnotesize
  \begin{Verbatim}[commandchars=!@\%,frame=single]
!color@blue%@gap>%!color@red%@ LoadPackage( "Sheaves" );%
true
!color@blue%@gap>%!color@red%@ F := Sheafify( M );%
<A coherent sheaf on some 2-dimensional projective space>
  \end{Verbatim}
  }
   First we compute the grade filtration of the graded module $\grM$ induced by the (second quadrant) bidualizing spectral sequence
  \[
    {}^\mathrm{II} E^2_{pq}(\grM) := \Ext_\bullet^{-p}(\Ext_\bullet^q(\grM,S),S) \Longrightarrow \begin{cases} \grM, & \mbox{for } p+q=0 \\ 0, & \mbox{otherwise}, \end{cases}
  \]
  where $\Ext_{\bullet}$ denotes the graded $\Ext$ over $S$ (see~\cite[§9.1.3]{BaSF}). %\todo[inline]{fix citation}
  Then we compute the grade filtration of the sheafification $\shF$ induced by
  \[
    {}^\mathrm{II} E^2_{pq}(\shF) := \mathcal{E}xt^{-p}(\mathcal{E}xt^q(\shF,S),S) \Longrightarrow \begin{cases} \shF, & \mbox{for } p+q=0 \\ 0, & \mbox{otherwise}, \end{cases}
  \]
  where $\mathcal{E}xt$ denotes the sheaf $\Ext$ over $\PP^2$.
  {\footnotesize
  \begin{Verbatim}[commandchars=!@\%,frame=single] 
!color@blue%@gap>%!color@red%@ II_E_M := BidualizingSpectralSequence( M );%
<A stable homological spectral sequence with pages at levels [ 0 .. 4 ]
  each consisting of graded modules at bidegrees [ -3 .. 0 ]x[ 0 .. 3 ]>
!color@blue%@gap>%!color@red%@ II_E_F := BidualizingSpectralSequence( F );%
<A stable homological spectral sequence with pages at levels [ 0 .. 3 ]
  each consisting of sheaves at bidegrees [ -3 .. 0 ]x[ 0 .. 3 ]>
  \end{Verbatim}
  }
  The \texttt{Display} command prints the board of the corresponding collapsing first spectral sequence ${}^\mathrm{I} E_{pq}$ consisting of the three pages ${}^\mathrm{I} E_{pq}^0,{}^\mathrm{I} E_{pq}^1,{}^\mathrm{I} E_{pq}^2$.
  Then it prints the board of the second spectral sequence ${}^\mathrm{II} E_{pq}$, i.e., the bidualizing spectral sequence.
  For ${}^\mathrm{II} E_{pq}(\grM)$ we have five pages, which is the maximal possible number of pages.
  For ${}^\mathrm{II} E_{pq}(\shF)$ the spectral sequence has four pages, which is again the maximal possible number.
  We display their boards\footnote{Legend: \texttt{*} $\triangleq$ nonzero object, \texttt{s} $\triangleq$ nonzero object which has stabilized, and \texttt{.} $\triangleq$ zero object.} side by side.
  {\footnotesize
  \begin{Verbatim}[commandchars=!@\%,frame=single] 
!color@blue%@gap>%!color@red%@ Display( II_E_M );%         !color@blue%@gap>%!color@red%@ Display( II_E_F );%
The associated transposed spectral sequence:
a homological spectral sequence at bidegrees [ [ 0 .. 3 ], [ -3 .. 0 ] ]
---------                       ---------
Level 0:                        Level 0:
                                
 * * * *                         * * * *
 * * * *                         * * * *
 * * * *                         * * * *
 . * * *                         . * * *
---------                       ---------
Level 1:                        Level 1:
                                
 * * * *                         * * * *
 . . . .                         . . . .
 . . . .                         . . . .
 . . . .                         . . . .
---------                       ---------
Level 2:                        Level 2:
                                
 s . . .                        s . . .
 . . . .                        . . . .
 . . . .                        . . . .
 . . . .                        . . . .

Now the spectral sequence of the bicomplex:
a homological spectral sequence at bidegrees [ [ -3 .. 0 ], [ 0 .. 3 ] ]
---------                       ---------
Level 0:                        Level 0:

 * * * *                        * * * *
 * * * *                        * * * *
 * * * *                        * * * *
 . * * *                        . * * *
---------                       ---------
Level 1:                        Level 1:

 * * * *                        * * * *
 * * * *                        * * * *
 * * * *                        * * * *
 . . * *                        . . * *
---------                       ---------
Level 2:                        Level 2:

 s . . .                        . . . .
 * . . .                        . . . .
 * * * .                        . * s .
 . . * *                        . . . *
---------                       ---------
Level 3:                        Level 3:

 s . . .                        . . . .
 * . . .                        . . . .
 . . s .                        . . s .
 . . . *                        . . . s
---------           
Level 4:            

 s . . .           
 . . . .           
 . . s .           
 . . . s           

\end{Verbatim}
}

Our algorithms also provide the isomorphisms from the filtered sheaf to $\shF$.
Let
\[
  \grN :=
  \coker \left(\begin{array}{ccccc|c}
\cdot&    \cdot&     \cdot&       x& -z& \cdot \\  
x y z&-x z^2&-x z+y z&-y&z&  \cdot\\
x^2 y&-x^2 z&-x^2+x y&\cdot& x-y&-x y\\
\hline
\cdot&    \cdot&     \cdot&       \cdot& \cdot&  z
\end{array}\right)
\]
be the graded $\grS$-module with the above matrix of relations for $6$ abstract generators of degrees $[ 0, 0, 1, 2, 2, 1 ]$.
The isomorphism $\widetilde{\grN}\to\shF$ is given by the \nameft{Gabriel} morphism $(1_\grN, m, \jmath)$, where $\jmath$ is the cokernel epi of $\beta:S \xrightarrow{\left(\begin{smallmatrix} \cdot&\cdot&\cdot&x&-z&\cdot \end{smallmatrix}\right)} \grM$ and $m:N\to\coker\beta$ is represented by the matrix
\[
  m =
  \begin{pmatrix}
\cdot& -1&\cdot& \cdot& \cdot& \cdot \\
1& \cdot& \cdot& \cdot& \cdot& \cdot  \\
-x&-y&-1&\cdot& \cdot& \cdot  \\
\cdot& \cdot& \cdot& -1&\cdot& \cdot  \\
\cdot& \cdot& \cdot& \cdot& -1&\cdot  \\
\cdot& \cdot& \cdot& \cdot& \cdot& -1
  \end{pmatrix} \mbox{.}
\]

\nameft{Grothendieck} remarks \cite[p.~19]{Tohoku_en}:
\begin{quote}
It is particularly convenient to use, when we have a spectral sequence (cf.~2.4) in $\A$, the fact that some terms of the spectral sequence belong to $\C$: reducing $\bmod$ $\C$ (i.e.\  applying the functor $\QQ$), we find a spectral sequence in $\A/\C$ in which the corresponding terms vanish, whence we have exact sequences $\bmod$ $\C$, with the help of the usual criteria for obtaining exact sequences from spectral sequences in which certain terms have vanished.
\end{quote}

\nameft{Gabriel} morphisms allow us to compute directly with sheaves (in $\A/\C$) rather than with graded modules in $\A=\Sfpgrmod$.
The sheaf spectral sequence ${}^\mathrm{II} E_{pq}(\shF)$ stabilizes on page earlier than the module one ${}^\mathrm{II} E_{pq}(\grM)$; the nonzero graded modules on the ${}^\mathrm{II} E^3_{pq}(\grM)$ which did not yet stabilize all lie in $\C=\Sfpgrmod^0$.
We can say that computing the above codomain $\jmath$ corresponds to the last page of ${}^\mathrm{II} E_{pq}(\grM)$.

It would be interesting to know if there exist algorithms to compute ``resolutions modulo $\C$'' of graded modules in $\A$, i.e., where only the sheafification of such a resolution is required to be acyclic.
Such potentially shorter resolutions might be cheaper to compute than $\A$-resolutions.

\section{The computability of \nameft{Gabriel} localizations} \label{sec:reflective}

The \nameft{Gabriel} morphism approach has advantages over two other approaches we present in this appendix.
The language of \nameft{Gabriel} morphisms establishes the computability of $\A/\C$ (cf.\ Definition~\ref{defn:computable}) by realizing the colimit as computing modulo the zeroid.
The other two approaches require the thick subcategory $\C$ to be localizing and that the corresponding localization monad is computable.
However, even if a localization monad exists and is computable, as the case in our application to coherent sheaves, computing the monad seems to be expensive on average.

\subsection{Preliminaries on \texorpdfstring{\nameft{Gabriel}}{Gabriel} localizations} 

In this subsection we recall some results about \nameft{Gabriel} localizations.
See \cite{Gab_thesis,BL_Monads} for details.

Let $\A$ be an \nameft{Abel}ian category and $\C \subset \A$ a thick subcategory.
An object $M \in \A$ is called \textbf{$\C$-saturated} if $\Ext^i(C,M)=0$ for $i=0,1$ and all $C \in \C$.
Denote by $\Sat_\C(\A)$ the full subcategory of $\C$-saturated objects.

The thick subcategory $\C$ is called \textbf{localizing} if the canonical functor $\QQ: \A \to \A/\C$ admits a right adjoint $\SS:\A/\C \to \A$, called the \textbf{section functor} of $\QQ$.
It is fully faithful, left exact, and preserves products.

Denote by $\delta: \QQ \circ \SS \to \Id_{\A/\C}$ and $\eta:\Id_\A \to \SS \circ \QQ$ the counit and unit of the adjunction $\QQ \dashv \SS$, respectively.
The counit $\delta$ is a natural isomorphism in our setup.
We call such a canonical functor $\QQ$ a \textbf{\nameft{Gabriel} localization} and the associated monad
\[
  (\WW,\eta,\mu):=(\SS \circ \QQ,\eta,\mu=\SS\delta \QQ)
\]
the \textbf{\nameft{Gabriel} monad}.
An object $M$ in $\A$ is $\C$-saturated if and only if $\eta_M:M \to \WW(M)$ is an isomorphism.
The essential image of $\SS$ is $\Sat_\C(\A)$.
In particular, $\Sat_\C(\A) \simeq \SS(\A/\C) \simeq \A/\C$ is an \nameft{Abel}ian category.

Define the \textbf{saturation functor} $\widehat{\QQ} := \cores_{\Sat_\C(\A)} \WW: \A \to \Sat_\C(\A)$.
The adjunction $\widehat{\QQ} \dashv (\iota: \Sat_\C(\A) \hookrightarrow \A)$ corresponds under the above equivalence to the adjunction $\QQ \dashv (\SS: \A/\C \to \A)$.
They both share the same adjunction monad $\WW = \SS \circ \QQ = \iota \circ \widehat{\QQ}: \A \to \A$.
In particular, $\widehat{\QQ}$ is exact and $\iota$ preserves kernels and products.

\subsection{The computability of \texorpdfstring{$\Sat_\C(\A)$}{Sat\_C(A)}} \label{subsec:Sat}

In case $\C$ is a localizing subcategory of the \nameft{Abel}ian category $\A$ then by $\A/\C\simeq\Sat_\C(\A)$ the computability of $\A/\C$ would follow from that of $\Sat_\C(\A)$.
This subsection shows that the computability of $\Sat_\C(\A)$ is implied by the computability of the \nameft{Gabriel} monad $\WW$.

\begin{defn}
  We call a monad $(\WW,\eta,\mu)$ \textbf{computable} if the underlying functor together with the unit $\eta$ and the multiplication $\mu$ are computable\footnote{
    If $(\WW,\eta,\mu)$ is computable then we can decide membership in $\C$.
    If $\mu: \WW^2 \to \WW$ is an isomorphism then $\eta \WW = \WW \eta$ is the (unique) inverse of $\mu$  by the coherence conditions.
    In this case the computability of $\mu$ follows from that of $\WW$ and $\eta$.
  }.
\end{defn}

$\Sat_\C(\A) \simeq \WW(\A)$ are not in general \nameft{Abel}ian \emph{sub}categories of $\A$ (in the sense of \cite[p.~7]{weihom}), as the embedding functor $\iota: \Sat_\C(\A) \to \A$ is only left exact.
Thus, all constructions in the (full) replete subcategory $\Sat_\C(\A) \subset \A$ coincide with those in $\A$ except for cokernels.
The following proposition resolves this problem with the cokernel.

\begin{prop} \label{prop:comp_using_Sat}
  Let $\A$ be a constructive \nameft{Abel}ian category and $\C$ a localizing subcategory.
  If the \nameft{Gabriel} monad $(\WW,\eta,\mu)$ is computable then $\Sat_\C(\A) \simeq \A/\C$ is constructively \nameft{Abel}ian.
\end{prop}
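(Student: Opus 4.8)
The plan is to realise $\Sat_\C(\A)$ as a computational model inside $\A$. I would represent an object of $\Sat_\C(\A)$ by a $\C$-saturated object of $\A$, i.e.\ an object $M$ for which the unit $\eta_M\colon M\to\WW(M)$ is an isomorphism, and a morphism by a plain $\A$-morphism between such objects; this is faithful because $\iota\colon\Sat_\C(\A)\hookrightarrow\A$ is fully faithful, so that $\Hom_{\Sat_\C(\A)}(M,N)=\Hom_\A(M,N)$ and both membership and equality there are decided in $\A$. The computability of $(\WW,\eta,\mu)$ provides the two primitives I need on objects: it lets me decide membership in $\C$ (hence recognise saturated objects by checking that $\ker\eta_M$ and $\coker\eta_M$ vanish), and it lets me saturate an arbitrary $\A$-object $M$ to the representative $\WW(M)$ together with the structure map $\eta_M$.

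Since $\iota$ is fully faithful, left exact, and preserves kernels and products, every axiom of a constructively \nameft{Abel}ian category whose witnessing construction is of limit type is inherited verbatim from $\A$. Concretely, I would compute the identity \eqref{Ab_IdentityMatrix}, composition \eqref{Ab_Compose}, the decisions \eqref{Ab_HomSetMember}--\eqref{Ab_HomSetEqual}, the commutative-inverse-monoid (in fact \nameft{Abel}ian-group) enrichment \eqref{Ab_AddMat}--\eqref{Ab_AbelianGroup}, the zero object, the product with its projections and the pairing \eqref{Ab_DiagMatProd}--\eqref{Ab_UnionOfRows}, the kernel with its lift \eqref{Ab_SyzygiesGenerators}--\eqref{Ab_DecideZeroEffectively}, and the lift along a mono \eqref{Ab_MonoLift} by the identically named $\A$-construction, their correctness being immediate from the preservation and reflection properties of $\iota$. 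None of these steps uses the monad beyond the recognition of objects.

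The single genuine construction, and the main obstacle, is the cokernel, where $\iota$ fails (being only left exact). For $\phi\colon M\to N$ between saturated objects I would first form the $\A$-cokernel $\epsilon\colon N\twoheadrightarrow C:=\coker_\A\phi$ and then saturate it: the cokernel in $\Sat_\C(\A)$ is $N\xrightarrow{\epsilon\,\eta_C}\WW(C)$, which is precisely $\widehat{\QQ}(\epsilon)$ after identifying $N$ with $\WW(N)$ via the isomorphism $\eta_N$. To witness the co-universal property \eqref{Ab_UnionOfRows_IdentityMatrix}--\eqref{Ab_colift}, I take $g\colon N\to L$ in $\Sat_\C(\A)$ with $\phi g=0$, use the $\A$-colift to factor $g=\epsilon\,\bar g$ with $\bar g\colon C\to L$, and then make the reflection bijection $\Hom_\A(C,L)\cong\Hom_\A(\WW(C),L)$ explicit by sending $\bar g$ to $(\WW\bar g)\,\eta_L^{-1}$; here $\eta_L$ is invertible because $L$ is saturated, and its inverse is computable in $\A$. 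Naturality of $\eta$ then gives $(\epsilon\,\eta_C)\,(\WW\bar g)\,\eta_L^{-1}=\epsilon\,\bar g=g$, so this is the desired colift, and its uniqueness follows from the adjunction $\widehat{\QQ}\dashv\iota$. Every morphism produced here comes from the computable data $\WW$, $\eta$, the $\A$-colift, and the computable inverse of an isomorphism.

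It remains to certify the two \nameft{Abel}ian axioms relative to these saturated cokernels. For the mono-lift \eqref{Ab_MonoLift} the cokernel epi is the saturated $\epsilon\,\eta_C$ rather than $\epsilon$, and I would observe that when $\phi$ is itself a mono the unit $\eta_C$ is automatically a monomorphism: a nonzero $\C$-kernel of $\eta_C$ would pull back along $\epsilon$ to a saturated subobject of $M$ with $\C$-quotient, which a one-line mono-epi argument in the \nameft{Abel}ian category $\Sat_\C(\A)$ forces to vanish. Hence $\ker(\epsilon\,\eta_C)=\ker_\A\epsilon$ and the mono-lift reduces to the one in $\A$. The epi-colift \eqref{Ab_EpiColift} is the dual situation, where an epi of $\Sat_\C(\A)$ is an $\A$-morphism with $\C$-cokernel; I would reduce it to the already established colift \eqref{Ab_colift} by writing such an epi as the saturated cokernel of its ($\A$-computed) kernel. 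I expect this reconciliation between the saturated cokernel and the underlying $\A$-cokernel to be the most delicate bookkeeping, whereas the honest mathematical content is the single saturation step of the previous paragraph. Finally, since $\Sat_\C(\A)\simeq\A/\C$, the stated conclusion for $\A/\C$ is immediate.
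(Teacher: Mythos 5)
Your proposal is correct and follows essentially the same route as the paper: all limit-type constructions are inherited verbatim through the fully faithful, kernel- and product-preserving embedding $\iota:\Sat_\C(\A)\hookrightarrow\A$, and the only genuine construction is the cokernel, obtained by post-composing the $\A$-cokernel epi with the unit $\eta_{\coker_\A\phi}$, with the colift given by applying $\WW$ (equivalently $\widehat{\QQ}$) to the $\A$-colift. Your additional bookkeeping for the mono-lift and epi-colift axioms is sound but goes slightly beyond what the paper spells out explicitly.
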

\begin{proof}
  The full embedding functor $\iota:\Sat_\C(\A) \hookrightarrow \A$ preserves kernels and products.
  \journal{
  It remains to treat cokernels (and their colifts).
  Let $M \xrightarrow{\phi} N$ be a morphism in $\Sat_\C(\A)$.
  Then $\coker_{\Sat_\C(\A)}\phi \cong \widehat{\QQ}(\coker_\A \iota(\phi))$ with cokernel epi $\epsilon_{\Sat_\C(\A), \phi}: N \to \coker_{\Sat_\C(\A)} \phi$ given by the composition of the $\A$-morphisms
  \[
    \iota(N) \xrightarrow{\epsilon_{\A, \iota(\phi})} \coker_\A \iota(\phi) \xrightarrow{\eta_{\coker_\A  \iota(\phi)}} \WW(\coker_\A \iota(\phi)) \cong \iota(\coker_{\Sat_\C(\A)}\phi) \mbox{,}
  \]
  where $\epsilon_{\A,\iota(\phi)}$ is the cokernel epi of $\iota(\phi)$ in $\A$ and $\eta$ is the unit of the monad $\WW$.
  In particular, the colift in $\Sat_\C(\A)$ arises from applying the saturation functor $\widehat{\QQ}$ to the colift in $\A$.
  }{
  It remains to treat cokernels (and their colifts) which is the content of Lemma~\ref{lemm:cokernel}.
  }
\end{proof}

\journal{}{
\begin{lemm} \label{lemm:cokernel}
Let $M \xrightarrow{\phi} N$ be a morphism in the category $\Sat_\C(\A)$.
Then\footnote{We write $\coker_\A \phi$ for $\coker_\A \iota(\phi)$.} $\coker_{\Sat_\C(\A)}\phi \cong \widehat{\QQ}(\coker_\A \phi)$ with the cokernel epi $\epsilon_{\Sat_\C(\A), \phi}: N \to 
\coker_{\Sat_\C(\A)} \phi$ given by the composition of the $\A$-morphisms
\[
  \iota(N) \xrightarrow{\epsilon_{\A, \phi}} \coker_\A \phi \xrightarrow{\eta_{\coker_\A  \phi}} \WW(\coker_\A \phi) \cong \iota(\coker_{\Sat_\C(\A)}\phi) \mbox{,}
\]
where $\epsilon_{\A,\phi}$ is the cokernel epi of $\phi$ in $\A$ and $\eta$ is the unit of the monad $\WW$.
In particular, the colift in $\Sat_\C(\A)$ arises from applying the saturation functor $\widehat{\QQ}$ to the colift in $\A$.
\end{lemm}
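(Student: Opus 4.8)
The plan is to exploit that $\Sat_\C(\A)$ is a reflective subcategory of $\A$ with reflector $\widehat{\QQ}$, i.e.\ that $\widehat{\QQ}$ is left adjoint to the fully faithful embedding $\iota$. Two standard consequences of this reflective situation drive the entire argument. First, being a left adjoint, $\widehat{\QQ}$ preserves all colimits, in particular cokernels. Second, since $\iota$ is fully faithful, the counit $c\colon \widehat{\QQ}\,\iota \to \Id_{\Sat_\C(\A)}$ is a natural isomorphism; by the triangle identity, on a saturated object $M$ its inverse satisfies $\iota(c_M^{-1}) = \eta_{\iota M}$, which is an isomorphism precisely because $M$ is $\C$-saturated. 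Throughout I recall that $\WW = \iota\circ\widehat{\QQ}$.

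First I would apply $\widehat{\QQ}$ to the cokernel diagram $M \xrightarrow{\iota\phi} N \xrightarrow{\epsilon_{\A,\phi}} \coker_\A \phi$ in $\A$. Since $\widehat{\QQ}$ preserves cokernels, $\widehat{\QQ}(\iota N) \xrightarrow{\widehat{\QQ}(\epsilon_{\A,\phi})} \widehat{\QQ}(\coker_\A\phi)$ is a cokernel of $\widehat{\QQ}(\iota\phi)$ in $\Sat_\C(\A)$. Invoking the natural isomorphism $\widehat{\QQ}\,\iota\cong\Id_{\Sat_\C(\A)}$ identifies $\widehat{\QQ}(\iota\phi)$ with $\phi$ and $\widehat{\QQ}(\iota N)$ with $N$, which yields the asserted isomorphism $\coker_{\Sat_\C(\A)}\phi \cong \widehat{\QQ}(\coker_\A\phi)$. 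Note that $\coker_\A\phi$ is in general \emph{not} $\C$-saturated — this is exactly why $\iota$ fails to preserve cokernels — so the application of $\widehat{\QQ}$ is unavoidable and carries the whole content of the lemma.

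Second, to pin down the cokernel epimorphism explicitly, I would re-base its source from $\widehat{\QQ}(\iota N)$ to $N$ along the counit isomorphism $c_N$, and read the result off in $\A$ via $\iota$. Because $\iota(c_N^{-1}) = \eta_N$ and $\iota\widehat{\QQ}(\epsilon_{\A,\phi}) = \WW(\epsilon_{\A,\phi})$, the cokernel epi becomes the $\A$-composite $\iota(N) \xrightarrow{\eta_N} \WW N \xrightarrow{\WW(\epsilon_{\A,\phi})} \WW(\coker_\A\phi)$. The naturality square of the unit $\eta\colon\Id_\A\to\WW$ at $\epsilon_{\A,\phi}$ reads, in the paper's postfix convention, $\epsilon_{\A,\phi}\,\eta_{\coker_\A\phi} = \eta_N\,\WW(\epsilon_{\A,\phi})$, which is exactly the claimed factorization $\iota(N)\xrightarrow{\epsilon_{\A,\phi}}\coker_\A\phi\xrightarrow{\eta_{\coker_\A\phi}}\WW(\coker_\A\phi)$. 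Finally, the statement about colifts follows from the same colimit-preservation: the unique $\A$-colift of a morphism $\eta'\colon N\to L$ of $\Sat_\C(\A)$ with $\phi\eta'=0$ along $\epsilon_{\A,\phi}$ is sent by $\widehat{\QQ}$ to the unique colift in $\Sat_\C(\A)$ along the cokernel epi just described.

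I expect the only delicate point to be the bookkeeping of the identifications rather than any genuine difficulty. Concretely, one must verify that the reflective-subcategory isomorphism $\widehat{\QQ}\,\iota\cong\Id$ is realized by the unit $\eta$ on saturated objects, so that the re-basing of the cokernel epi from $\widehat{\QQ}(\iota N)$ to $N$ is indeed given by $\eta_N$ and not some unrelated isomorphism, and one must keep the postfix composition order consistent when invoking naturality of $\eta$. Everything else is a formal consequence of $\widehat{\QQ}$ being a left adjoint together with the computability of $\WW$ and $\eta$, the latter ensuring that each step above is constructive.
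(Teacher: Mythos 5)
Your proposal is correct and follows essentially the same route as the paper's proof: both apply the exact (colimit-preserving) reflector $\widehat{\QQ}$ to the $\A$-cokernel sequence, identify $\widehat{\QQ}\,\iota$ with the identity via the counit isomorphism, and then use the triangle identity $(\eta\iota)(\iota\widehat{\delta})=\Id_\iota$ together with naturality of $\eta$ to rewrite the cokernel epi as $\epsilon_{\A,\phi}\,\eta_{\coker_\A\phi}$. The only cosmetic difference is that you invoke ``left adjoints preserve colimits'' where the paper cites the exactness of $\widehat{\QQ}$ established in its preliminaries.
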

\begin{proof}
Consider the exact sequence $\iota(M) \xrightarrow{\iota(\phi)} \iota(N) \xrightarrow{\epsilon_{\A,\phi}} \coker_\A \phi \to 0$ in $\A$.
The exactness of $\widehat{\QQ}$ yields an exact sequence $\WW(M) \xrightarrow{\WW(\phi)} \WW(N) \xrightarrow{\widehat{\QQ}(\epsilon_{\A,\phi})} \widehat{\QQ}(\coker_\A \phi) \to 0$ in $\Sat_\C(\A)$ (which might not be exact in $\A$).
The counit of the adjunction $\widehat{\delta}: \widehat{\QQ} \circ \iota \stackrel{\sim}{\to} \Id_{\Sat_\C(\A)}$ yields the following commutative diagram
\begin{center}
  \begin{tikzpicture}
  
    %nodes:
    \node (QiM) {$\WW(M)$};
    \node (QiN) at (4,0) {$\WW(N)$};
    \node (Qcoker) at (8,0) {$\widehat{\QQ}(\coker_\A \phi)$};
    \node (Q0) at (11,0) {$0$};
    \node (M) at (0,-1.5) {$M$};
    \node (N) at (4,-1.5) {$N$};
    \node (cokerSat) at (8,-1.5) {$\coker_{\Sat_\C(\A)} \phi$};
    \node (0) at (11,-1.5) {$0$};
    
    %arrows:
    \draw[-stealth'] (QiM) -- node[above] {$\WW(\phi)$} (QiN);
    \draw[-stealth'] (QiN) -- node[above] {$\widehat{\QQ}(\epsilon_{\A,\phi})$} (Qcoker);
    \draw[-stealth'] (Qcoker) -- (Q0);
    \draw[-stealth'] (M) -- node[above] {$\phi$} (N);
    \draw[-stealth'] (N) -- node[above] {$\epsilon_{\Sat_\C(\A),\phi}$} (cokerSat);
    \draw[-stealth'] (cokerSat) -- (0);
    \draw[-stealth'] (QiM) -- node[left] {$\widehat{\delta}_M$} node[right] {$\cong$} (M);
    \draw[-stealth'] (QiN) -- node[left] {$\widehat{\delta}_N$} node[right] {$\cong$} (N);
    \draw[dotted,-stealth'] (Qcoker) -- node[right] {$\cong$} (cokerSat);
    %\draw[-stealth'] (Q0) -- (0);
    
  \end{tikzpicture}
\end{center}
The natural isomorphism $\coker_{\Sat_\C(\A)} \phi \cong \widehat{\QQ}(\coker_\A \phi)$ now follows from the essential uniqueness of the cokernel.

The last assertion follows from the triangle identity $(\eta\iota)(\iota \widehat{\delta})=\Id_\iota$ and the commutation rules $\eta_{\iota(N)} \WW(\epsilon_{\A,\phi}) = \epsilon_{\A,\phi} \eta_{\coker_\A \phi}$, expressing that $\eta$ is a natural transformation.

\begin{center}
  \begin{tikzpicture}[scale=0.92]
    
    \coordinate (right) at (0.31,0);
    \coordinate (left) at ($(0,0)-(right)$);
        
    %nodes:
    \node (QiM) {$\WW(\iota(M))$};
    \node (QiN) at (5.5,0) {$\WW(\iota(N))$};
    \node (Qcoker) at ($2*(QiN)$) {$\WW(\coker_\A \phi)$};
    \node (Q0) at ($(Qcoker)+(3,0)$) {$0$};
    \node (M) at (0,-2) {$\iota(M)$};
    \node (N) at ($(M)+(QiN)$) {$\iota(N)$};
    \node (cokerSat) at ($(M)+(Qcoker)$) {$\iota(\coker_{\Sat_\C(\A)} \phi)$};
    \node (0) at ($(M)+(Q0)$) {$0$};
    
    %arrows:
    \draw[-stealth'] (QiM) -- node[above] {$\WW(\iota(\phi))$} (QiN);
    \draw[-stealth'] (QiN) -- node[above] {$\WW(\epsilon_{\A,\phi})$} (Qcoker);
    \draw[-stealth'] (Qcoker) -- (Q0);
    \draw[-stealth'] (M) -- node[above] {$\iota(\phi)$} (N);
    \draw[-stealth'] (N) -- node[above] {$\iota(\epsilon_{\Sat_\C(\A),\phi})$} (cokerSat);
    \draw[-stealth'] (cokerSat) -- (0);
    
    \draw[-stealth'] ($(QiM.south)+(left)$) -- node[left] {$(\iota \widehat{\delta})_M$} node[right] {$\cong$} ($(M.north)+(left)$);
    \draw[-stealth'] ($(QiN.south)+(left)$) -- node[left] {$(\iota \widehat{\delta})_N$} node[right] {$\cong$} ($(N.north)+(left)$);
    \draw[dotted,-stealth'] ($(Qcoker.south)+(left)$) -- node[right] {$\cong$} ($(cokerSat.north)+(left)$);
    
    \draw[stealth'-] ($(QiM.south)+(right)$) -- node[right] {$(\eta \iota)_M$} ($(M.north)+(right)$);
    \draw[stealth'-] ($(QiN.south)+(right)$) -- node[right] {$(\eta \iota)_N$} ($(N.north)+(right)$);
    \draw[dotted,stealth'-] ($(Qcoker.south)+(right)$) -- ($(cokerSat.north)+(right)$);
    %\draw[-stealth'] (Q0) -- (0);
    
  \end{tikzpicture}
  \qedhere
\end{center}
\end{proof}
}

\subsection{\nameft{Gabriel}-\nameft{Zisman} localization} \label{subsec:GZ}

The concept of \nameft{Gabriel} localizations is a special case of \nameft{Gabriel}-\nameft{Zisman} localizations (cf.\ \cite[Prop.~I.1.3]{GabZis}).
We describe the special case, which relies on the adjunction $\QQ\dashv(\SS:\A/\C\to\A)$.
For the general case see \cite[Chap.~1.2]{GabZis}.

They use the unit of the adjunction $\eta:\Id_\A \to \WW$ and the $\Hom$-adjunction
\[
 \Phi_{M,N}=\Phi_{\QQ(M),\QQ(N)}: \Hom_{\A/\C}(\QQ(M),\QQ(N)) \cong \Hom_\A(M, \WW(N)) \mbox{,}
\]
to describe a morphism $\psi:\QQ(M) \to \QQ(N)$ in $\A/\C$ as $M\xrightarrow{\Phi_{M,N}(\psi)}\WW(N)\xleftarrow{\eta_{N}}N$, a right fraction of two morphisms in $\A$.

One can use the $2$-arrow calculus of right fractions for the computability of $\A/\C$.
We suppress the details.
\begin{coro} \label{coro:garbiel.zisman}
  Let $\A$ be a constructively \nameft{Abel}ian category.
  If the \nameft{Gabriel} monad $(\WW,\eta,\mu)$ is computable then $\A/\C$ (with above the calculus of right fractions) is constructively \nameft{Abel}ian.
\end{coro}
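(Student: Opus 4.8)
The plan is to take the adjunction bijection $\Phi_{M,N}\colon\Hom_{\A/\C}(\QQ M,\QQ N)\xrightarrow{\sim}\Hom_\A(M,\WW N)$ as the data structure: a morphism $\QQ M\to\QQ N$ is stored as the single $\A$-morphism $f:=\Phi_{M,N}(\psi)\colon M\to\WW N$, which is exactly the left leg of the right fraction $M\xrightarrow{f}\WW N\xleftarrow{\eta_N}N$. Since the objects of $\A/\C$ are those of $\A$ and the right leg $\eta_N$ is fixed by the target, this presents $\A/\C$ as the \nameft{Kleisli} category of the \nameft{Gabriel} monad $\WW$. First I would record that $\Phi$ is a natural isomorphism of \nameft{Abel}ian groups (both $\QQ$ and $\SS$ being additive), so that deciding equality and $\Hom$-set membership, together with addition, subtraction, and the zero morphism, all reduce verbatim to the corresponding computable operations in $\Hom_\A(M,\WW N)$; here the computability of $\WW$ is used to produce the object $\WW N$ and to evaluate $\WW$ on morphisms.

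Next I would pin down the two structural operations that do not come for free from the abelian-group transport. The identity of $\QQ M$ is represented by the unit $\eta_M\colon M\to\WW M$, and the composite of $f\colon M\to\WW N$ with $g\colon N\to\WW L$ is the \nameft{Kleisli} composite $f\,\WW(g)\,\mu_L\colon M\to\WW L$ (postfix order); both are computable directly from the hypotheses that $\eta$, $\WW$, and $\mu$ are computable, and the monad laws guarantee unitality and associativity. For the additive and pre-\nameft{Abel}ian axioms I would exploit that every $\psi$ factors as $\psi=\QQ(f)\,\delta_{\QQ N}$ with $\delta$ a natural isomorphism, so that each universal construction in $\A/\C$ can be computed by performing the corresponding construction on the honest $\A$-morphism $f\colon M\to\WW N$ and then applying the \emph{exact} functor $\QQ$. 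Concretely, the direct sum is $\QQ(M_1\oplus M_2)$ with $\WW(M_1\oplus M_2)\cong\WW M_1\oplus\WW M_2$ supplying the (co)pairings; and because $\QQ$ is exact one gets $\ker\psi\cong\QQ(\ker_\A f)$ and $\coker\psi\cong\QQ(\coker_\A f)$, with the associated kernel lifts and cokernel colifts obtained from those in $\A$ and transported through $\Phi$ and $\delta$. Notably the cokernel, which was the one delicate point in the saturated picture of Proposition~\ref{prop:comp_using_Sat}, is here unproblematic precisely because the representatives live on the $\WW$-side and $\QQ$ (unlike $\iota$) is exact.

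The conceptual backbone, which I would state once and reuse, is that this \nameft{Kleisli} representation is computably interconvertible with the saturated-object representation of Proposition~\ref{prop:comp_using_Sat}: precomposition with $\eta_M$ is a bijection $\Hom_\A(\WW M,\WW N)\cong\Hom_\A(M,\WW N)$ since $\WW N$ is $\C$-saturated, with computable inverse $f\mapsto\WW(f)\,\mu_N$, and under $\A/\C\simeq\Sat_\C(\A)$ this matches $\Phi$. Thus correctness of the formulas above need only be checked up to this equivalence, and the computability ultimately rides on that of Proposition~\ref{prop:comp_using_Sat}.

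The main obstacle I expect is not any single construction but the bookkeeping in the composition step: verifying that the abstract composite in $\A/\C$ really equals the \nameft{Kleisli} composite $f\,\WW(g)\,\mu_L$ requires unwinding $\Phi$ through the triangle identities and the naturality of $\eta$ and $\delta$, and it is the one place where the monad multiplication $\mu$, rather than merely the functor $\WW$, is essential. Because $\C$ is localizing the counit $\delta$ is a natural isomorphism, hence $\mu=\SS\delta\QQ$ is invertible; I would use this to reduce the associativity and unit checks to the idempotent-monad identities and thereby avoid grinding through the general \nameft{Kleisli} calculation.
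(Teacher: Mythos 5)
Your proposal is correct and matches the paper's intended argument: the paper introduces exactly this right-fraction data structure via the $\Hom$-adjunction $\Phi$ and the unit $\eta$, explicitly suppresses the verification, and in its comparison subsection records the same computable interconversion (e.g.\ $f\mapsto\WW(f)\,\mu_N$, precomposition with $\eta$) between right fractions and the saturated model of Proposition~\ref{prop:comp_using_Sat} on which you lean. Your \nameft{Kleisli}-style elaboration, including the use of the invertibility of $\mu$ to reduce the unit and associativity checks to idempotent-monad identities, supplies precisely the details the paper omits.
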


\subsection{Comparison of the computational approaches}

Both the computability of $\Sat_\C(\A)$ from Proposition~\ref{prop:comp_using_Sat} and the \nameft{Gabriel}-\nameft{Zisman} localization from Corollary~\ref{coro:garbiel.zisman} rely on saturation.
For the computability of $\Sat_\C(\A)$ we need to saturate the input and all cokernels (taken in $\A$), and in \nameft{Gabriel}-\nameft{Zisman}'s calculus of right fractions we need to compute the unit of the adjunction $\eta_N: N \to \WW(N)$ for every object $N$ occurring as target of a morphism.
In our applications to coherent sheaves both the saturation and the computation of the unit of the adjunction seem to be expensive.
In particular, the approach of \nameft{Gabriel} morphisms seems best suited for an efficient implementation.

\journal{}{
For the sake of completeness, we describe how to convert a morphism from one computational model to another.

We can associate to a \nameft{Gabriel} morphism $\phi=[\imath_\phi,\overunderline{\phi},\jmath_\phi]:M\to N$ in $\A$ with respect to $\C$ a morphism in $\Sat_\C(\A)$ by applying the saturation functor $\widehat{\QQ}:\A \to \Sat_\C(\A)$ to $\phi$ which turns $\imath_\phi$ and $\jmath_\phi$ into isomorphisms.
Thus we can invert them and get
\[
  \widehat{\QQ}(\phi):=\widehat{\QQ}(\imath_\phi)^{-1}\widehat{\QQ}(\overunderline{\phi})\widehat{\QQ}(\jmath_\phi)^{-1}: \widehat{\QQ}(M)\to \widehat{\QQ}(N)\mbox{.}
\]
Conversely, any morphism $\widehat\psi:\widehat{\QQ}(M)\to\widehat{\QQ}(N)$ yields a \nameft{Gabriel} morphism from $M$ to $N$ defined as the lift $\liftofCalongB{\eta_N}{(\eta_M \widehat{\psi})}$ of $\eta_M \widehat{\psi}$ along $\eta_N$, both regarded as honest \nameft{Gabriel} morphisms (cf.~Lemma~\ref{lemm:lift_of_gabriel_morphisms}).

% \begin{wrapfigure}[8]{r}{5.0cm}
% \centering
% \begin{tikzpicture}
%   % places
%     \node (A)      [right=2 cm of A]  {$M$};
%     \node (As)     [right=2 cm of A] {$N$};
%     \node (SQA)    [below=0.75 cm of A]      {$\WW(M)$};
%     \node (SQAs)   [below=0.75 cm of As]     {$\WW(N)$};
%     \node (SQSQAs) [below=0.75 cm of SQAs]   {$\WW^2(N)$};
%   % arrows
%     \draw[-stealth'] (A)    -- node[above]         {$\phi$}               (SQAs);
%     \draw[-stealth'] (SQA)  -- node[below left]    {$\WW(\phi)$}         (SQSQAs);
%     \draw[-stealth'] (A)    -- node[right]         {$\eta_{M}$}          (SQA);
%     \draw[-stealth'] (As)   -- node[right]         {$\eta_{N}$}         (SQAs);g
%     \draw[-stealth'] (SQAs) -- node[right]         {$\eta_{\WW(N)}$}
%                         node[below=-2pt,sloped] {$ \sim $}             (SQSQAs);
%     \draw[-stealth',dotted] (SQA) -- node[above] {$\widehat{\phi}$} (SQAs);
% \end{tikzpicture}
% \end{wrapfigure}

To any morphism $M \xrightarrow{\phi} \WW(N) \xleftarrow{\eta_{N}} N$ in $\A/\C$ described by a right fraction we can associate the morphism $\widehat{\phi}:= \WW(\phi)(\eta_{\WW(N)})^{-1}: \WW(M) \to \WW(N)$ in $\WW(\A)$.
Conversely, to any morphism $\widehat{\phi}: \WW(M) \to \WW(N)$ in $\WW(\A)$ we can associate the morphism $M \xrightarrow{\phi} \WW(N) \xleftarrow{\eta_{N}} N$ defined by $\phi := \eta_M \widehat{\phi}$.

% Finally we describe the relation between \nameft{Gabriel} morphisms and right fractions.
To any \nameft{Gabriel} morphism $[\imath,\varphi,\jmath]$ we can associate the right fraction
\[
  M\xrightarrow{\eta_M \WW(\imath)^{-1} \WW(\varphi) \WW(\jmath)^{-1}} \WW(N)\xleftarrow{\eta_N}N\mbox{.}
\]
Conversely, any right fraction $M\xrightarrow{\varphi}\WW(N)\xleftarrow{\eta_N}N$ yields a \nameft{Gabriel} morphism from $M$ to $N$ defined as the lift $\liftofCalongB{\eta_N}{\varphi}$ of $\varphi$ along $\eta_N$, both regarded as honest \nameft{Gabriel} morphisms (cf.~Lemma~\ref{lemm:lift_of_gabriel_morphisms}).
}

\end{appendix}

\section*{Acknowledgments}
We thank \nameft{Sebastian Posur} for helpful discussions about the subtleties of the enrichment of $\Gen(\A)$.

\def\cprime{$'$} \def\cprime{$'$} \def\cprime{$'$} \def\cprime{$'$}
  \def\cprime{$'$}

\end{document}

%%% Local Variables: 
%%% mode: latex
%%% TeX-master: t
%%% End: 